\documentclass[12pt]{amsart}
\usepackage{amsthm,amsmath,amssymb,url,color,tikz}
\usepackage{graphicx}
\usetikzlibrary{patterns}
\usepackage[all]{xy}
\usepackage{young}
\usepackage{enumerate}
\usepackage[margin=1in]{geometry}
\usepackage[colorlinks=true,citecolor=black,linkcolor=black,urlcolor=blue]{hyperref}

\usetikzlibrary{arrows,automata}
\usepackage[colorinlistoftodos,color=red!70]{todonotes}

\theoremstyle{plain}
\newtheorem{thm}{Theorem}
\newtheorem{lem}[thm]{Lemma}
\newtheorem{prop}[thm]{Proposition}
\newtheorem{cor}[thm]{Corollary}

\theoremstyle{definition}
\newtheorem{defn}[thm]{Definition}
\newtheorem{ex}[thm]{Example}

\theoremstyle{remark}
\newtheorem{rem}[thm]{Remark}

\numberwithin{thm}{section}

\newcommand{\RR}{\mathbb{R}}
\newcommand{\ZZ}{\mathbb{Z}}
\newcommand{\QQ}{\mathbb{Q}}

\newcommand{\mcO}{\mathcal{O}}

\def\<{\langle}
\def\>{\rangle}
\def\longto{\longrightarrow}

\DeclareMathOperator\Acyc{Acyc}
\DeclareMathOperator\Conj{Conj}
\DeclareMathOperator\cl{cl}
\DeclareMathOperator\C{C}
\DeclareMathOperator\NC{NC}
\DeclareMathOperator\card{card}
\DeclareMathOperator\Togglable{Togglable}
\DeclareMathOperator\ind{Ind}

\begin{document}

\title{Noncrossing partitions, toggles, and homomesies}
\author[Einstein]{David Einstein}
\address{Department of Mathematics, University of Massachusetts Lowell, Lowell, MA, USA}
\email{deinst@gmail.com}
\author[Farber]{Miriam Farber}
\address{Department of Mathematics, Massachusetts Institute of Technology, Cambridge, MA 02139, USA}
\email{mfarber@mit.edu}
\author[Gunawan]{Emily Gunawan}
\address{School of Mathematics, University of Minnesota, Minneapolis, MN 55455, USA}
\email{egunawan@umn.edu}
\author[Joseph]{Michael Joseph}
\address{Department of Mathematics, University of Connecticut, Storrs, CT 06269-1009, USA}
\email{michael.j.joseph@uconn.edu}
\author[Macauley]{Matthew Macauley}
\address{Department of Mathematical Sciences, Clemson University, Clemson, SC 29634-0975, USA}
\email{macaule@clemson.edu}
\author[Propp]{James Propp}
\address{Department of Mathematics, University of Massachusetts Lowell, Lowell, MA, USA}
\email{{\rm See} http://jamespropp.org}
\author[Rubinstein-Salzedo]{Simon Rubinstein-Salzedo}
\address{Euler Circle, Palo Alto, CA 94306, USA}
\email{simon@eulercircle.com}
\date{\today}

\subjclass[2010]{05E18}
\keywords{Coxeter element, homomesy, involution, noncrossing partition, toggle group}

\maketitle


\begin{abstract}
We introduce $n(n-1)/2$ natural involutions (``toggles'') on the set
$S$ of noncrossing partitions $\pi$ of size $n$, along with certain
composite operations obtained by composing these involutions.  We show
that for many operations $T$ of this kind, a surprisingly large family
of functions $f$ on $S$ (including the function that sends $\pi$ to
the number of blocks of $\pi$) exhibits the homomesy phenomenon: the
average of $f$ over the elements of a $T$-orbit is the same for all
$T$-orbits.  We can apply our method of proof more broadly to toggle
operations back on the collection of independent sets of certain
graphs. We utilize this generalization to prove a theorem about toggling on a family of graphs called ``$2$-cliquish.'' More generally, the philosophy of this ``toggle-action,'' proposed by Striker, is a popular topic of current and future research in dynamic algebraic combinatorics.

\bigskip\noindent \textbf{Keywords:} Coxeter element; homomesy; involution; noncrossing partition; toggle group
\end{abstract}


\section{Introduction}

 A {\em partition} of $[n]:=\{1,2,\dots,n\}$ is a collection $\pi$ of disjoint sets $B_1,B_2,\dots,B_K$ with union $[n]$. We call the $B_i$'s ``blocks'' and write $|\pi|=K$.
A partition $\pi$ is {\em noncrossing} if whenever $1 \leq i < j < k < \ell \leq n$,
we do not have $i$ and $k$ belonging to one block of $\pi$
with $j$ and $\ell$ belonging to a different block.
(For motivation of the term ``noncrossing'',
see the discussion of the linear representation of $\pi$ below, and the circular representation in Section~\ref{sec:toggling}.)
Simion and Ullman~\cite{simion91structure}
define an involution $\lambda$ on the set of noncrossing partitions of $[n]$
(they call it $\alpha$)
with the property that $|\pi| + |\lambda(\pi)| = n+1$.
This map is related to
a different operation on noncrossing partitions,
the \emph{Kreweras complementation} \cite{kreweras1972sur}, denoted $\kappa$.
The bijection $\kappa$ is not an involution
but it too satisfies $|\pi| + |\kappa(\pi)| = n+1$.
The actions $\kappa$ and $\lambda$ have very different orbit-structures,
but they share the property that the average of $|\pi|$ over each orbit is $(n+1)/2$.
That is, in the terminology of Propp and Roby ~\cite{propp2015homomesy}, the statistic $\pi \mapsto |\pi|$
is {\em homomesic} under the actions of $\kappa$ and $\lambda$,
or more specifically, $c$-mesic with $c=(n+1)/2$.

In this paper, we exhibit a large class of actions
sharing this homomesy property with $\kappa$ and $\lambda$.
These actions are obtained from the {\em toggle-action}
philosophy first studied by Cameron and Fon-Der-Flaass~\cite{cameron1995orbits} and more recently by Striker and Williams~\cite{striker2012promotion} and Striker~\cite{striker2016rowmotion}.
This philosophy invites us to act on combinatorial objects
via operations obtained as compositions of
extremely simple involutions.
These involutions (called {\em toggles}) may have many fixed points;
 indeed, it is usually the case that for any given toggle operation $\tau$
acting on a set $S$ of combinatorial objects being studied,
most of the elements of $S$ are fixed by $\tau$.
However, by composing many toggles
we obtain a permutation $T$ of $S$ that mixes $S$ up
more than any individual toggle does.
Propp and Roby~\cite{propp2015homomesy} add to this picture
the observation that in many cases of interest,
$T$ does such a good job of mixing up $S$
that, for some interesting numerical statistics $f$ on $S$,
the average of $f$ on each $T$-orbit is some constant
that only depends on $f$, not what orbit we are in.

In this article,
$S$ is the set of noncrossing partitions $\pi$ of $[n]$
and $f(\pi)$ is $|\pi|$ or various related quantities.
To define the sorts of toggles we use,
we make use of the {\em linear representation} of noncrossing partitions, as shown in Figure~\ref{nclinear}.
This representation of $\pi$
depicts the numbers $1,\dots,n$ as equally-spaced points on a horizontal line
and consists of arcs above the line joining points $i$ and $j$
whenever $i$ and $j$ are successive elements of the same block.
Formally, the linear representation $P$ of $\pi$
consists of those pairs $(i,j)$ with $1 \leq i < j \leq n$
with the property that $i$ and $j$ are in the same block of $\pi$
but none of $i+1,i+2,\dots,j-1$ (the ``interior'' of the arc $(i,j)$) are also in that block.
The noncrossing property of the partition guarantees that
if two arcs belong to $P$,
then their interiors are disjoint,
their left endpoints are distinct,
and their right endpoints are distinct.
(That is, we never see two arcs exhibiting
the three forbidden configurations
depicted in Figure~\ref{fig:noncommuting-toggles},
called respectively {\em crossing},
{\em left-nesting}, and {\em right-nesting}.
Note however that nested arcs are allowed.)
Conversely, any collection of arcs
satisfying these conditions
determines a unique noncrossing partition $\pi$.
 
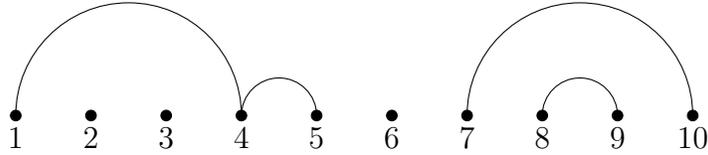
\begin{figure}
\begin{center}
\begin{tikzpicture}
\foreach \x in {1,...,10}
{
\filldraw (\x,0) circle (2pt);
\path node at (\x,-.3) {\x};
}
\draw (1,0) arc (180:0:1.5cm);
\draw (4,0) arc (180:0:0.5cm);
\draw (7,0) arc (180:0:1.5cm);
\draw (8,0) arc (180:0:0.5cm);
\end{tikzpicture}
\end{center}
\caption{The linear representation
$P=\{(1,4),(4,5),(7,10),(8,9)\}$
of the noncrossing partition 
$\pi=\{\{1,4,5\},\{2\},\{3\},\{6\},\{7,10\},\{8,9\}\}$.}
\label{nclinear}
\end{figure}

\begin{figure}
\begin{center}
\begin{tikzpicture}
\begin{scope}[shift={(0,0)}]
\filldraw (0,0) circle (2pt);
\filldraw (1,0) circle (2pt);
\filldraw (2,0) circle (2pt);
\filldraw (3,0) circle (2pt);
\draw (0,0) arc (180:0:1);
\draw (1,0) arc (180:0:1);
\end{scope}
\begin{scope}[shift={(5,0)}]
\filldraw (0,0) circle (2pt);
\filldraw (1,0) circle (2pt);
\filldraw (2,0) circle (2pt);
\draw (0,0) arc (180:0:1);
\draw (0,0) arc (180:0:.5);
\end{scope}
\begin{scope}[shift={(9,0)}]
\filldraw (0,0) circle (2pt);
\filldraw (1,0) circle (2pt);
\filldraw (2,0) circle (2pt);
\draw (0,0) arc (180:0:1);
\draw (1,0) arc (180:0:.5);
\end{scope}
\end{tikzpicture}
\end{center}
\caption{Disallowed pairs of arcs in a noncrossing partition: crossing, left-nesting, and right-nesting, respectively.}
\label{fig:noncommuting-toggles}
\end{figure}
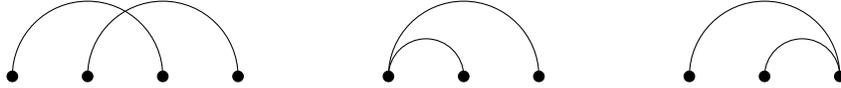

For each pair $i,j$ with $1 \leq i < j \leq n$,
the toggle operation $\tau_{i,j}$ can be summarized as follows: ``\emph{If arc $(i,j)$ is present, then remove it. If it is missing, add it if possible.}'' More formally: Given a noncrossing partition $\pi$ of $[n]$, draw its linear representation $L$,
which either (1) contains the arc $(i,j)$ or (2) does not.
In case (1), let $L'$ be $L$ with arc $(i,j)$ removed;
in case (2), let $L'$ be $L$ with arc $(i,j)$ added.
If $L'$ is the linear representation of some noncrossing partition $\pi'$
(guaranteed to exist in case (1) but not guaranteed to exist in case (2)),
let $\tau_{i,j}(\pi) = \pi'$; 
otherwise let $\tau_{i,j}(\pi) = \pi$.
For example, when $\pi$ is the noncrossing partition
whose linear representation appears in Figure~\ref{nclinear},
applying $\tau_{i,j}$ removes the edge $(i,j)$
when $(i,j)$ is $(1,4)$, $(4,5)$, $(7,10)$, or $(8,9)$,
and adds the edge $(i,j)$
when $(i,j)$ is $(2,3)$, $(5,6)$, $(5,7)$, or $(6,7)$;
for all other pairs $(i,j)$, $\tau_{i,j}$ has no effect on $\pi$.

Our main result (Theorem~\ref{thm:arccounthomomesy})
is that $\pi \mapsto |\pi|$ is $(n+1)/2$-mesic
under a very large class of operations $T$ obtained
as compositions of toggles
(even though for most of our operations $T$
it is not the case that $|\pi|+|T(\pi)|=n+1$ for all $\pi$).
Careful definitions and statements of theorems are given in the next section;
succeeding sections provide proofs
and discussion of side-issues.

Since we will be dealing almost exclusively
with noncrossing partitions by way of their linear representations,
we will in many parts of this article abuse terminology
by referring to these linear representations as noncrossing partitions.
When we have occasion to refer directly to the blocks $B_1,\dots,B_K$
rather than to the arcs,
we will call $\pi = \{B_1,\dots,B_K\}$
the {\em block representation} of the noncrossing partition $P$,
where $P$ is a set of arcs.

\section*{Acknowledgments}

This article was made possible by the
American Institute of Mathematics,
which sponsored the workshop on
Dynamical Algebraic Combinatorics
organized by Propp, Roby, Striker, and Williams;
the seven authors' success in proving
and generalizing Propp's original conjecture
(presented on the first day of the workshop)
has made all seven keenly appreciative
of the wisdom of the AIM workshop format
and its emphasis on collaborative brainstorming
rather than traditional lecturing.
The authors met for several hours 
during each afternoon of the workshop.
Progress during those five afternoons was dramatic
but in a sense ``negative'';
instead of proving our conjectures
we only succeeded in finding more of them.
We continued to email one another during the weeks that followed,
and the ideas behind the proof only materialized
a month later. We would also like to thank the referee for useful suggestions.

 \section{Toggling noncrossing partitions}\label{sec:toggling}

In addition to employing the linear representation
of noncrossing partitions via arcs
(for purposes of defining the toggle operations), 
we will have occasion to use
the more classical {\em circular representation} 
of noncrossing partitions (for purposes of defining the Kreweras complement 
and clarifying its relation to the Simion-Ullman involution).
This representation of $\pi$
depicts the numbers $1,\dots,n$ as equally-spaced points on a circle (by convention arranged clockwise)
and the blocks as convex hulls.
Figure~\ref{ncgraphics} shows the linear and circular
representations of the noncrossing partition
$\pi=\{\{1\},\{2,4,5\},\{3\},\{6,8\},\{7\}\}$. The noncrossing property ensures that the convex hulls are pairwise disjoint, i.e., the blocks are ``noncrossing.''
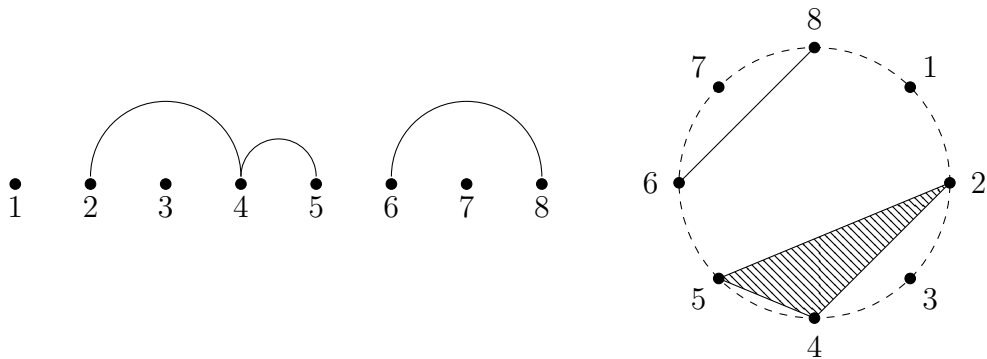
\begin{figure}
\begin{center}
\begin{tikzpicture}
\foreach \x in {1,...,8}
{
\filldraw (\x,-1.1) circle (2pt);
\path node at (\x,-1.4) {\x};
}
\draw (2,-1) arc (180:0:1cm);
\draw (4,-1) arc (180:0:.5cm);
\draw (6,-1) arc (180:0:1cm);
\path node at (4.5,-3.4) { };
\end{tikzpicture}
\qquad
\begin{tikzpicture}
\draw[dashed] (0,0) circle (1.8cm);
\foreach \x in {1,...,8}
{
\filldraw (90-45*\x:1.8) circle (.07cm);
\path node at (90-45*\x:2.18) {\x};
}
\draw[pattern = north west lines] (0:1.8) -- (-90:1.8) -- (-135:1.8) -- cycle;
\draw (180:1.8) -- (90:1.8);
\end{tikzpicture}
\end{center}
\caption{The linear and circular representations of
the noncrossing partition 
$\pi=\{\{1\},\{2,4,5\},\{3\},\{6,8\},\{7\}\}$.}
\label{ncgraphics}
\end{figure}

Let $\NC(n)$ denote the set of noncrossing partitions of $[n]$. As was mentioned in the introduction, we sometimes consider elements of $\NC(n)$ as sets of arcs such that the corresponding ``arc diagram'' is free of
the disallowed configurations shown in Figure~\ref{fig:noncommuting-toggles}, and we sometimes instead speak of a noncrossing partition as having blocks, and will refer to the associated collection $\{B_1,\dots,B_K\}$ as the block representation of $\pi$. We shall typically use uppercase Roman letters, especially $P$, to refer to the arc representation and lowercase Greek letters, especially $\pi$, to refer to the block representation. 

For a fixed $[n]$, there is a natural partial order on the set of noncrossing partitions by \emph{refinement}: $\pi\leq\pi'$ if each block of $\pi$ is contained in a block of $\pi'$. This endows $\NC(n)$ with a lattice structure. More details on this and other properties of $\NC(n)$ can be found in the fun survey article \cite{mccammond2006noncrossing}. Note that removing an arc from a nonempty noncrossing partition $P$ yields another noncrossing partition strictly finer than $P$, but this ``subset order'' is \emph{not} the same as the refinement order. For example, the noncrossing partition in $\NC(3)$ consisting of the single arc $(1,3)$ has two blocks. It is a refinement of, but not a subset of, the noncrossing partition consisting of the arcs $(1,2)$ and $(2,3)$, which has only one block. 

Recall that we have informally defined 
$\tau_{i,j}$ (for $1\leq i<j\leq n$)
to be the permutation of $\NC(n)$ defined by adding/removing the arc $(i,j)$ to/from each noncrossing partition whenever possible, and doing nothing otherwise. 
The formal definition of the toggle operations follow.

\begin{defn} 
Given a pair $(i,j)$ with $1\le i<j\le n$, the \emph{toggle operation} $\tau_{i,j}$ on $\NC(n)$ is defined to be 
\[
\tau_{i,j}(P)=\begin{cases} P\cup\{(i,j)\} & (i,j)\not\in P \text{ and } P\cup\{(i,j)\}\in \NC(n), \\ P\setminus\{(i,j)\} & (i,j)\in P, \\ P & \text{otherwise.} \end{cases}
\] 
The \emph{toggle group} $W_n$ is the subgroup of the permutation group $S_{\NC(n)}$ generated by the $\binom{n}{2}$ toggle operations. We write toggles from right-to-left, so $\tau_{i,j}\tau_{k,\ell}:=\tau_{i,j}\circ\tau_{k,\ell}$.
\label{def:toggle_NC(n)}
\end{defn}

It is clear that each toggle operation is an involution. The object of this paper is to understand some well-behaved statistics of the toggle group and its action on $\NC(n)$. We define the relevant notions in Section~\ref{sec:homomesy}. The choice of $W_n$ to denote the  toggle group is motivated by the fact that it is always a quotient of a Coxeter group, which is classically denoted by $W$. We will revisit this in Section~\ref{sec:coxeter}.

It will be helpful for what follows to classify which pairs of toggles do and do not commute. Any pair of distinct arcs $(i,j)$ and $(k,\ell)$ can be classified into one of six types (possibly after swapping $(i,j)$ with $(k,\ell)$): 
\begin{enumerate} 
\item $i<j<k<\ell$ (disjoint), 
\item $i<k<\ell<j$ (nesting), 
\item $i<j=k<\ell$ ($m$-shaped), 
\item $i=k<j<\ell$ (left-nesting), 
\item $i<k<j=\ell$ (right-nesting), 
\item $i<k<j<\ell$ (crossing). \end{enumerate} 
The type is sufficient to determine whether or not the pair of toggles commutes.



\begin{prop} \label{prop:commuting-toggles} 
Let $\tau_{i,j}$ and $\tau_{k,\ell}$ be distinct toggles. Then $\tau_{i,j}$ and $\tau_{k,\ell}$ commute if and only if the arcs $(i,j)$ and $(k,\ell)$ are disjoint, nesting, or $m$-shaped.
\end{prop}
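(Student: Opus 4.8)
The plan is to pin down exactly when toggling one arc can influence the action of the other toggle. Call two distinct arcs $(i,j)$ and $(k,\ell)$ \emph{conflicting} if together they form one of the three forbidden patterns of Figure~\ref{fig:noncommuting-toggles}, i.e.\ if they are crossing, left-nesting, or right-nesting; matching this against the six-way classification shows that the conflicting pairs are precisely types (4), (5), and (6), while disjoint, nesting, and $m$-shaped pairs are non-conflicting. The first step is the elementary observation that adjoining a single arc $(i,j)$ to a noncrossing partition $P$ can only create a forbidden pattern that involves $(i,j)$ itself, so $P\cup\{(i,j)\}\in\NC(n)$ if and only if $P$ contains no arc conflicting with $(i,j)$. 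It follows that $\tau_{i,j}(P)$ is determined by two bits of data — whether $(i,j)\in P$, and whether $P$ contains some arc conflicting with $(i,j)$ — and that $\tau_{i,j}(P)$ always agrees with $P$ outside the membership of the arc $(i,j)$.

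For the ``if'' direction, I would take non-conflicting arcs $(i,j)$ and $(k,\ell)$ and an arbitrary $P$, and observe that $\tau_{k,\ell}$ changes $P$ only by inserting or deleting $(k,\ell)$; since $(k,\ell)\ne(i,j)$ and $(k,\ell)$ does not conflict with $(i,j)$, both bits of data controlling $\tau_{i,j}$ are the same for $P$ and for $\tau_{k,\ell}(P)$. Hence $\tau_{i,j}$ carries out the identical operation (delete $(i,j)$, insert $(i,j)$, or nothing) in either case, and symmetrically $\tau_{i,j}$ does not disturb the data controlling $\tau_{k,\ell}$. Since these two operations affect the memberships of two different arcs, $\tau_{i,j}\tau_{k,\ell}(P)$ and $\tau_{k,\ell}\tau_{i,j}(P)$ are both equal to ``$P$ with the membership of $(i,j)$ and of $(k,\ell)$ each updated by its own rule,'' so they coincide.

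For the ``only if'' direction, I would test the two composites on $P_0$, the all-singletons partition (empty arc set), whenever $(i,j)$ and $(k,\ell)$ conflict. Because $P_0$ has no arcs, $\tau_{k,\ell}(P_0)=\{(k,\ell)\}$ and $\tau_{i,j}(P_0)=\{(i,j)\}$, and because the two arcs conflict, $\tau_{i,j}$ fixes $\{(k,\ell)\}$ and $\tau_{k,\ell}$ fixes $\{(i,j)\}$; thus $\tau_{i,j}\tau_{k,\ell}(P_0)=\{(k,\ell)\}\ne\{(i,j)\}=\tau_{k,\ell}\tau_{i,j}(P_0)$, where the inequality holds in each of the three conflicting types because the relevant two-element blocks are distinct (for crossing and right-nesting $i\notin\{k,\ell\}$ since $i<k$, and for left-nesting $\ell\notin\{i,j\}$ since $\ell>j$).

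I expect the only real (and still minor) obstacle to be the bookkeeping behind the first observation: confirming that a crossing, a shared left endpoint, and a shared right endpoint are the only ways a newly added arc can clash with an existing one, and that these three possibilities line up exactly with types (6), (4), and (5). Once that is in place, the rest is the short formal remark that each toggle's behavior depends only on its own arc together with the arcs conflicting with it, none of which is moved by a non-conflicting toggle.
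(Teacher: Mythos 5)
Your proposal is correct and follows essentially the same route as the paper: the non-commuting cases are detected by evaluating both composites on the empty partition, and the commuting cases follow because neither toggle disturbs the data (presence of the other arc, or of arcs conflicting with it) that governs the other's action. You simply spell out the ``does not interfere'' step more explicitly than the paper does, which is a harmless elaboration rather than a different argument.
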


\begin{proof} Suppose first that $(i,j)$ and $(k,\ell)$ are left-nesting, right-nesting, or crossing. Let $P=\{\}$ be the empty noncrossing partition of $[n]$. Then $\tau_{i,j}\tau_{k,\ell}(\pi)=\{(k,\ell)\}$, whereas $\tau_{k,\ell}\tau_{i,j}(\pi)=\{(i,j)\}$. Thus $\tau_{i,j}$ and $\tau_{k,\ell}$ do not commute.

On the other hand, suppose that $(i,j)$ and $(k,\ell)$ are disjoint, nested, or $m$-shaped. Then adding or removing $(i,j)$ does not interfere with adding or removing $(k,\ell)$, so $\tau_{i,j}$ and $\tau_{k,\ell}$ commute.
\end{proof}

The commuting pairs are illustrated in Figure~\ref{fig:commuting-toggles}, and the non-commuting pairs were shown back in Figure~\ref{fig:noncommuting-toggles}.

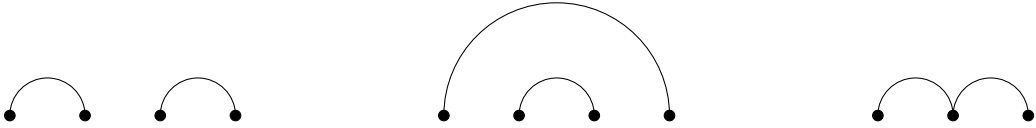
\begin{figure}
\begin{center}
\begin{tikzpicture}
\filldraw (0,0) circle (2pt);
\filldraw (1,0) circle (2pt);
\filldraw (2,0) circle (2pt);
\filldraw (3,0) circle (2pt);
\draw (0,0) arc (180:0:.5);
\draw (2,0) arc (180:0:.5);
\end{tikzpicture}
\qquad\qquad\qquad
\begin{tikzpicture}
\filldraw (0,0) circle (2pt);
\filldraw (1,0) circle (2pt);
\filldraw (2,0) circle (2pt);
\filldraw (3,0) circle (2pt);
\draw (0,0) arc (180:0:1.5);
\draw (1,0) arc (180:0:.5);
\end{tikzpicture}
\qquad\qquad\qquad
\begin{tikzpicture}
\filldraw (0,0) circle (2pt);
\filldraw (1,0) circle (2pt);
\filldraw (2,0) circle (2pt);
\draw (0,0) arc (180:0:.5);
\draw (1,0) arc (180:0:.5);
\end{tikzpicture}
\end{center}
\caption{Commuting pairs of toggles: disjoint, nesting, and $m$-shaped, respectively.}
\label{fig:commuting-toggles}
\end{figure}

\begin{cor} \label{do not commute} Given $i<j$ with $j-i=m$, there are $m(n+1-m)-2$ toggles $\tau_{k,\ell}$ that do not commute with $\tau_{i,j}$.
\end{cor}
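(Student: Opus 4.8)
The plan is to apply Proposition~\ref{prop:commuting-toggles} directly. By that result, a toggle $\tau_{k,\ell}$ fails to commute with $\tau_{i,j}$ exactly when the arcs $(i,j)$ and $(k,\ell)$ form a left-nesting, right-nesting, or crossing pair, so it suffices to enumerate the arcs $(k,\ell)\ne(i,j)$ standing in one of these three relations to the fixed arc $(i,j)$. I would first observe that these three families are pairwise disjoint: a left-nesting partner shares the left endpoint $i$ (so $k=i$), a right-nesting partner shares the right endpoint $j$ (so $\ell=j$), a crossing partner shares neither endpoint, and an arc with both $k=i$ and $\ell=j$ is just $(i,j)$ itself. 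Hence the desired count is the sum of three separate counts.

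Next I would count each family. For the left-nesting partners, fix $k=i$ and let $\ell$ range over $\{i+1,\dots,n\}\setminus\{j\}$, giving $n-i-1$ arcs; symmetrically, the right-nesting partners have $\ell=j$ and $k\in\{1,\dots,j-1\}\setminus\{i\}$, giving $j-2$ arcs. For the crossing partners I would use the ``possibly after swapping'' clause of the six-type classification and split into the two interleaving patterns: either $i<k<j<\ell$, contributing $(j-i-1)(n-j)$ arcs, or $k<i<\ell<j$, contributing $(i-1)(j-i-1)$ arcs. Writing $m=j-i$, the crossing count is $(m-1)(n-j)+(m-1)(i-1)=(m-1)(n-j+i-1)=(m-1)(n-m-1)$.

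Finally I would add the three counts. Substituting $j=i+m$ makes the dependence on $i$ cancel in the nesting terms, $(n-i-1)+(j-2)=n+m-3$, so the total is $n+m-3+(m-1)(n-m-1)$, which expands and simplifies to $m(n+1-m)-2$. I do not expect a real obstacle here — the argument is elementary bookkeeping — but the one place to be careful is the crossing case: one must remember to include both orientations of the interleaving, and one should sanity-check the boundary cases $i=1$ and $j=n$, where some summands vanish while the closed formula still holds.
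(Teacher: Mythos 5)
Your proof is correct, and it rests on the same foundation as the paper's: Proposition~\ref{prop:commuting-toggles} reduces the problem to counting the arcs that are left-nesting, right-nesting, or crossing with $(i,j)$, and the rest is bookkeeping. The only genuine difference is how the count is organized. You partition the non-commuting arcs by type --- all arcs sharing the left endpoint $i$ (there are $n-i-1$), all arcs sharing the right endpoint $j$ (there are $j-2$), and the crossing arcs in their two interleaving patterns (totaling $(m-1)(n-m-1)$) --- whereas the paper groups them by the position of their endpoints relative to the interval $[i,j]$: first the $n-m-1$ arcs that share an endpoint with $(i,j)$ and extend outside it, then the $(m-1)(n+1-m)$ arcs having exactly one endpoint strictly between $i$ and $j$ (a family that mixes the internally half-nested arcs with all of the crossing ones). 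Both partitions are exhaustive and disjoint for the same underlying reasons, and both simplify to $m(n+1-m)-2$. Your version has one more case, but each case is tied directly to the type classification, and your checks --- the pairwise disjointness of the three families via shared endpoints, the two orientations of crossing, and the boundary cases $i=1$ and $j=n$ --- are exactly the right points of care; the paper's grouping has the mild advantage that the factored term $(m-1)(n+1-m)$ appears immediately without recombining two crossing subcases.
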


\begin{proof}
Let $i<j$ with $j-i=m$. We will classify the toggles that do not commute with $\tau_{i,j}$.

If $\ell>j$, then $\tau_{i,\ell}$ does not commute with $\tau_{i,j}$, and if $k<i$, then $\tau_{k,j}$ does not commute with $\tau_{i,j}$.  This type of toggle is in one-to-one correspondence with the numbers in $[n]$ that are less than $i$ or greater than $j$, and there are $n-m-1$ such numbers.

The other way that $\tau_{k,\ell}$ will not commute with $\tau_{i,j}$ is if one of $k$ or $\ell$ is strictly between $i$ and $j$, and the other is not strictly between $i$ and $j$.  There are $m-1$ numbers in $[n]$ that are strictly between $i$ and $j$, and the other $n+1-m$ numbers in $[n]$ are not strictly between $i$ and $j$, so there are $(m-1)(n+1-m)$ pairs $k,l$ of this type.

Adding up the two cases, there are $n-m-1+(m-1)(n+1-m)=m(n+1-m)-2$ toggles $\tau_{k,\ell}$ that do not commute with $\tau_{i,j}$.
\end{proof}

Proposition \ref{prop:commuting-toggles}, together with the following result, shows how the order of the product of two toggles is determined only by their ``type.'' This should motivate the connection to Coxeter theory, which will be described in more detail in the following section. 

\begin{prop}\label{prop:m}
For any pair of toggles $\tau_{i,j}$ and $\tau_{k,\ell}$, let $m(\tau_{i,j}\tau_{k,\ell})$ denote the order of the element $\tau_{i,j}\tau_{k,\ell}$ in $W_n$. Then  
\[
m(\tau_{i,j},\tau_{k,\ell})=\left\{
\begin{array}{ll}
      1 & \text{if } (i,j)=(k,\ell),\\
      2 & \text{if } \tau_{i,j},\tau_{k,\ell} \text{ commute and }(i,j)\not=(k,\ell),\\
      6 & \text{if } \tau_{i,j},\tau_{k,\ell} \text{ do not commute.}
\end{array} 
\right.
\]
\end{prop}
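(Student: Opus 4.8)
The plan is to study the subgroup $\langle\tau_{i,j},\tau_{k,\ell}\rangle\le W_n$ through its action on $\NC(n)$, exploiting that the order of a permutation is the least common multiple of its cycle lengths. The first two cases are immediate. If $(i,j)=(k,\ell)$ then $\tau_{i,j}\tau_{k,\ell}=\tau_{i,j}^2=\mathrm{id}$, since each toggle is an involution, so the order is $1$. If $\tau_{i,j}$ and $\tau_{k,\ell}$ commute and $(i,j)\ne(k,\ell)$, then $(\tau_{i,j}\tau_{k,\ell})^2=\tau_{i,j}^2\tau_{k,\ell}^2=\mathrm{id}$, while $\tau_{i,j}\tau_{k,\ell}\ne\mathrm{id}$ because $\tau_{i,j}$ and $\tau_{k,\ell}$ are distinct permutations: they send the empty noncrossing partition to the single-arc partitions $\{(i,j)\}$ and $\{(k,\ell)\}$ respectively. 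Hence the order is exactly $2$.

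The substantive case is when $\tau_{i,j}$ and $\tau_{k,\ell}$ do not commute; write $g=\tau_{i,j}\tau_{k,\ell}$. First I would prove the upper bound $\mathrm{ord}(g)\mid 6$. The key structural point is that any word in $\tau_{i,j}$ and $\tau_{k,\ell}$ alters a noncrossing partition only in the two ``slots'' corresponding to the arcs $(i,j)$ and $(k,\ell)$; hence the orbit of any $P$ under $\langle\tau_{i,j},\tau_{k,\ell}\rangle$ is contained in $\{P'\cup X : X\subseteq\{(i,j),(k,\ell)\}\}\cap\NC(n)$, where $P'=P\setminus\{(i,j),(k,\ell)\}$ (still a noncrossing partition, since deleting arcs preserves this property). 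By Proposition~\ref{prop:commuting-toggles}, non-commuting means $\{(i,j),(k,\ell)\}$ forms one of the three forbidden configurations, so $P'\cup\{(i,j),(k,\ell)\}\notin\NC(n)$; thus every such orbit, and in particular every $\langle g\rangle$-orbit, has at most $3$ elements. Therefore each cycle of $g$ has length $1$, $2$, or $3$, so $\mathrm{ord}(g)\mid 6$.

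For the lower bound I would exhibit explicit cycles. A uniform computation (each step either adjoins an isolated arc or is blocked by the forbidden pair) shows $g\colon \emptyset\mapsto\{(k,\ell)\}\mapsto\{(i,j)\}\mapsto\emptyset$, a genuine $3$-cycle, so $3\mid\mathrm{ord}(g)$. Next, in each of the three non-commuting types I would name a partition on which $g$ acts as a transposition: for crossing ($i<k<j<\ell$) and for right-nesting ($i<k<j=\ell$) take $P=\{(i,k),(k,\ell)\}$, and for left-nesting ($i=k<j<\ell$) take $P=\{(i,j),(j,\ell)\}$; in each case one checks directly that $g(P)\ne P$ while $g^2(P)=P$, so $2\mid\mathrm{ord}(g)$. (Left- and right-nesting are interchanged by the order-reversing symmetry $x\mapsto n+1-x$ of $\NC(n)$, so one of those two checks could be omitted.) Combining the bounds gives $6\mid\mathrm{ord}(g)\mid 6$.

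The main obstacle is the lower bound, and specifically producing the $2$-cycle: unlike the $3$-cycle on $\emptyset$, it requires a noncrossing partition carrying a third arc that ``blocks'' exactly one of $(i,j)$ and $(k,\ell)$ via a nesting conflict, and the correct choice of that arc differs slightly across the crossing, left-nesting, and right-nesting cases (note that $\{(i,k),(k,\ell)\}$ degenerates when $i=k$). One must also take care that the orbit-size bound rests on the fact, already noted in the text, that removing arcs from a noncrossing partition yields a noncrossing partition, and not on anything about the refinement order.
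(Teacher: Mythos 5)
Your proposal is correct and follows essentially the same route as the paper: both handle the first two cases trivially, bound every orbit by $3$ using the fact that non-commuting arcs cannot coexist and only two ``slots'' ever change, exhibit the $3$-cycle on the empty partition, and produce a $2$-cycle by adjoining a third arc compatible with exactly one of $(i,j)$ and $(k,\ell)$. The only difference is the choice of that auxiliary arc (you use $(i,k)$ or $(j,\ell)$ depending on the case, the paper uses $(j,\ell)$ and a symmetry argument), which is immaterial.
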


\begin{proof}
It is clear from the fact that each toggle is an involution and no two toggle operations are the same that $m(\tau_{i,j},\tau_{k,\ell})=2$ if $\tau_{i,j}$ and $\tau_{k,\ell}$ commute unless $(i,j)=(k,\ell)$ in which case $m(\tau_{i,j},\tau_{k,\ell})=1$. 

Now suppose $\tau_{i,j}$ and $\tau_{k,\ell}$ do not commute. By Proposition~\ref{prop:commuting-toggles}, the arcs are either left-nesting, right-nesting, or $m$-shaped, and so no noncrossing partition can contain both $(i,j)$ and $(k,\ell)$.  Since the maps $\tau_{i,j}$ and $\tau_{k,\ell}$ only affect two arcs in a noncrossing partition, there can be at most three noncrossing partitions in any orbit of $\tau_{i,j}\tau_{k,\ell}$.

Applying $\tau_{i,j}\tau_{k,\ell}$ to the empty partition $\{\}$ gives $\{(k,\ell)\}$.  Applying $\tau_{i,j}\tau_{k,\ell}$ to $\{(k,\ell)\}$ gives $\{(i,j)\}$, and then applying $\tau_{i,j}\tau_{k,\ell}$ again gives $\{\}$, so this is an orbit of size 3.

Let $A$ be any arc that can be in the same noncrossing partition as one of $(i,j)$ and $(k,\ell)$, but not the other. Figure \ref{fig:noncommuting-toggles} shows that we can always find such an arc. Specifically, if these arcs are crossing with $i<k<j<\ell$, or left-nesting with $i=k<j<\ell$, then $A=(j,\ell)$ will work. The right-nesting case is analogous. Without loss of generality, assume $A$ can be in the same noncrossing partition as $(i,j)$ but not $(k,\ell)$.  Then there is an orbit of size 2 containing the noncrossing partitions $\{A\}$ and $\{A,(i,j)\}$. 

As there exists an orbit of size 2 and an orbit of size 3, and no orbit has size larger than 3, $\tau_{i,j}\tau_{k,\ell}$ has order 6.
\end{proof}

Let $C_n$ denote the $n^\text{th}$ Catalan number, where $C_0=C_1=1$. It is well-known that the cardinality $|\NC(n)|$ is $C_n$ \cite{mccammond2006noncrossing}. The enumeration of certain subsets of $\NC(n)$ based on toggles can also be expressed in terms of Catalan numbers, as in the following proposition.

\begin{prop}\label{prop:numberoffixedpartitions_depend_on_j_minus_i}
Define $\NC(n)_{i,j}$ to be the set of noncrossing partitions containing the arc $(i,j)$ 
and $\Togglable(n)_{i,j} := \{ P\in \NC(n)\mid (i,j)\notin P$ but $(i,j)\in\tau_{i,j}(P) \}$.
\begin{enumerate}
\item $|\Togglable(n)_{i,j}| = |\NC(n)_{i,j}|$.
\item \label{item:lem:numberfixedbytoggle_ij}
$|\NC(n)_{i,i+k}|=C_{n-k}C_{k-1}$. In particular, $|\NC(n)_{i,i+1}|=C_{n-1}$.
\item The number of partitions $\pi\in\NC(n)$ fixed by $\tau_{i,i+k}$ is $$C_n-2|\NC(n)_{i,i+k}|=C_n-2C_{n-k}C_{k-1}.$$
\item $|\NC(n)_{i,i+k}|=|\NC(n)_{i,i+n+1-k}|$.
\end{enumerate}
\end{prop}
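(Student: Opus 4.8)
The plan is to prove the four parts in order, leaning on a single structural bijection for part (2) and then deriving the rest.

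For part (1), I would exhibit an explicit bijection between $\Togglable(n)_{i,j}$ and $\NC(n)_{i,j}$. The natural candidate is $\tau_{i,j}$ itself: if $P \in \Togglable(n)_{i,j}$ then by definition $(i,j) \notin P$ but $\tau_{i,j}(P) = P \cup \{(i,j)\} \in \NC(n)$, so $\tau_{i,j}(P) \in \NC(n)_{i,j}$; conversely if $Q \in \NC(n)_{i,j}$ then $(i,j) \in Q$, so $\tau_{i,j}(Q) = Q \setminus \{(i,j)\}$, which contains no arc $(i,j)$ and has the property that adding $(i,j)$ back yields $Q \in \NC(n)$, hence $\tau_{i,j}(Q) \in \Togglable(n)_{i,j}$. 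Since $\tau_{i,j}$ is an involution, these two restrictions are mutually inverse, giving the equality of cardinalities.

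For part (2), the key step is to decompose a noncrossing partition $P$ containing the arc $(i,i+k)$. The arc $(i,i+k)$ means $i$ and $i+k$ lie in the same block with nothing of that block strictly between them; by the noncrossing condition, the points $i+1, \dots, i+k-1$ "inside" the arc and the points outside $\{i, i+1, \dots, i+k\}$ cannot interact across the arc. More precisely, I would argue that $P$ restricted to the interior labels $\{i+1, \dots, i+k-1\}$ is an arbitrary noncrossing partition of a $(k-1)$-element set, while $P$ restricted to the "exterior" — the labels $\{1, \dots, i\} \cup \{i+k, \dots, n\}$ with $i$ and $i+k$ identified because they share a block — is an arbitrary noncrossing partition of an $(n-k)$-element set containing no additional constraint beyond that identification. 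This gives the factorization $|\NC(n)_{i,i+k}| = C_{k-1} \cdot C_{n-k}$. The specialization $k=1$ gives an empty interior ($C_0 = 1$) and $|\NC(n)_{i,i+1}| = C_{n-1}$. The main obstacle here is making the exterior count rigorous: one must check that merging the endpoints $i$ and $i+k$ into a single point turns the exterior configuration into an unrestricted noncrossing partition of $n - k$ points (the $k$ points $i+1, \dots, i+k$ collapse, contributing one merged point, so $n - (k+1) + 1 = n-k$), and that no crossing or nesting is created or destroyed by this collapse — this is where the noncrossing hypothesis does the real work, and I would verify it by checking that any arc of the exterior either lies entirely in $\{1,\dots,i\}$, entirely in $\{i+k,\dots,n\}$, or is the "outermost" arc continuing the block of $i$ and $i+k$, none of which can cross the arc $(i,i+k)$.

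Part (3) is then immediate: the set of partitions \emph{not} fixed by $\tau_{i,i+k}$ is exactly $\NC(n)_{i,i+k} \sqcup \Togglable(n)_{i,i+k}$ — applying $\tau_{i,i+k}$ changes a partition precisely when the arc is present (and gets removed) or absent-but-addable (and gets added) — so the number of fixed partitions is $C_n - |\NC(n)_{i,i+k}| - |\Togglable(n)_{i,i+k}| = C_n - 2|\NC(n)_{i,i+k}|$ by part (1), and then part (2) substitutes $C_{n-k}C_{k-1}$. For part (4), I would simply apply the formula from part (2) to both sides: $|\NC(n)_{i,i+k}| = C_{n-k}C_{k-1}$ and $|\NC(n)_{i,i+(n+1-k)}| = C_{n-(n+1-k)}C_{(n+1-k)-1} = C_{k-1}C_{n-k}$, which are equal by commutativity of multiplication. (One should note the index $i$ is a placeholder here — the count depends only on the gap $j - i$, which is why the formula in part (2) is stated without reference to $i$ — so the identity in part (4) is really the symmetry $k \leftrightarrow n+1-k$ of the function $k \mapsto C_{n-k}C_{k-1}$.)
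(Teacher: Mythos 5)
Your proposal is correct and follows essentially the same route as the paper: part (1) via the involution $\tau_{i,j}$ restricted to these two sets, part (2) via the standard interior/exterior Catalan decomposition (which the paper explicitly omits, citing the standard recurrence for $|\NC(n)|=C_n$), and parts (3) and (4) as direct consequences. Your write-up simply supplies the details the paper leaves out, including the correct observation that the non-fixed partitions are exactly $\NC(n)_{i,j}\sqcup\Togglable(n)_{i,j}$.
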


\begin{proof}
\begin{enumerate}
\item The toggle $\tau_{i,j}$ gives a bijection between $\NC(n)_{i,j}$ and $\Togglable(n)_{i,j}$.
\item The details are omitted, but it follows from the standard recurrence for showing that $|\NC(n)|=C_n$.


\item The conclusion follows from the above two items.
\item By part~(\ref{item:lem:numberfixedbytoggle_ij}), we have $\NC(n)_{i,i+(n+1-k)} = C_{n-(n+1-k)}C_{n+1-k-1} = C_{k-1} C_{n-k}$.
\end{enumerate}
\end{proof}

The commutation relations between the toggle operations 
can be described by a undirected graph, called the \emph{base graph}.

\begin{defn}\label{defn:base-graph}
The base graph $\Gamma_n$ of $W_n$ is the graph $(V,E)$, where $V=\{\tau_{i,j}\mid i<j\}$, and $E$ consists of edges of the form $\{\tau_{i,j},\tau_{k,\ell}\}$, where $\tau_{i,j}$ and $\tau_{k,\ell}$ are non-commuting toggles. \end{defn}

It is easiest to arrange the vertex set $\{\tau_{i,j}\mid 1\leq i<j\leq n\}$ in an upper-triangular grid, as shown in Figure~\ref{fig:base-graph}. Each row is a clique (complete subgraph), as is each column; these correspond to the half-nesting pairs. Finally, there are some ``diagonal edges,'' which correspond to crossing pairs: $(i,j)$ and $(k,\ell)$ where $i<k<j<\ell$. All of these are ``negatively sloped'' in the sense of calculus. There are no ``positively sloped'' diagonal edges.

\begin{figure}
\begin{center}
\begin{tikzpicture}[scale=1.5,auto,shorten >=-2pt,shorten <=-2pt]
  \node (12) at (0,3) {{\tiny$\tau_{1,2}$}};
  \node (13) at (1,3) {{\tiny$\tau_{1,3}$}};
  \node (14) at (2,3) {{\tiny$\tau_{1,4}$}};
  \node (15) at (3,3) {$\dots$};
  \node (1n) at (4,3) {{\tiny$\tau_{1,n}$}};
  \node (23) at (1,2) {{\tiny$\tau_{2,3}$}};
  \node (24) at (2,2) {{\tiny$\tau_{2,4}$}};
  \node (25) at (3,2) {$\dots$};
  \node (2n) at (4,2) {{\tiny$\tau_{2,n}$}};
  \node (34) at (2,1) {{\tiny$\tau_{3,4}$}};
  \node (35) at (3,1) {$\dots$};
  \node (3n) at (4,1) {{\tiny$\tau_{3,n}$}};
  \node at (3,.25) {$\ddots$};
  \node at (4,.5) {$\vdots$};
  \node (n-2n) at (4,0) {{\tiny$\tau_{n-2,n}$}};
  \node (n-1n) at (4,-1) {{\tiny$\tau_{n-1,n}$}};
  \draw (12) to (13);\draw (13) to (14);
  \draw (13) to (2n);\draw (14) to (2n);\draw (14) to (3n);
  \draw (12) to [bend left=20] (14);
  \draw (12) to [bend left=20] (1n);
  \draw (13) to [bend left=20] (1n);
  \draw (14) to [bend left=20] (1n);
  \draw (23) to (24);\draw (24) to (3n);
  \draw (23) to [bend left=20] (2n);
  \draw (24) to [bend left=20] (2n);
  \draw (34) to [bend left=20] (3n);
  \draw (1n) to (2n);\draw (2n) to (3n);
  \draw (1n) to [bend left=25] (3n);
  \draw (2n) to [bend left=25] (n-1n);
  \draw (3n) to [bend left=25] (n-1n);
  \draw (1n) to [bend left=25] (n-1n);
  \draw (2n) to [bend left=25] (n-2n);
  \draw (3n) to [bend left=25] (n-2n);
  \draw (1n) to [bend left=25] (n-2n);
  \draw (14) to (24);\draw (24) to (34);
  \draw (14) to [bend left=25] (34);
  \draw (13) to (23);
  \draw (13) to (24);\draw (24) to (35);
  \draw (3.95,0) to (3.5,.5);\draw (n-2n) to (n-1n);
\end{tikzpicture}
\end{center}
\caption{The base-graph $\Gamma_n$ of the toggle group $W_n$.}
\label{fig:base-graph}
\end{figure}
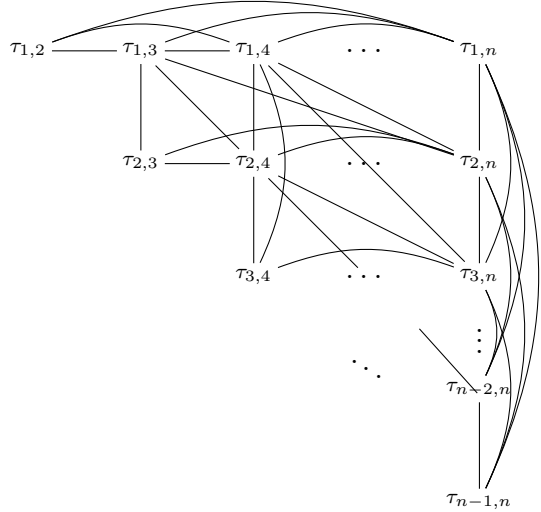

In summary, the base graph $\Gamma_n$, when drawn as in Figure~\ref{fig:base-graph}, has three types of edges: horizontal, vertical, and diagonal. Thus, it is possible to describe certain acyclic orientations of $\Gamma_n$ as e.g.\ ``orienting all edges east, south, and southeast'' or ``orienting all edges east, north, and southeast.''

\section{Coxeter groups}\label{sec:coxeter}

Since the toggle group is generated by involutions, it is a quotient of a Coxeter group \cite{bjorner2005combinatorics}. 

\begin{defn}
A \emph{Coxeter system} of rank $r$ is a pair $(W,S)$ consisting of a group $W$ generated by a set $S=\{s_1,\dots,s_r\}$ of involutions with presentation  
\[
W=\<s_1,\dots,s_r\mid s_i^2=(s_i s_j)^{m_{ij}}=1\>,
\]
where $m(s_i,s_j):=m_{ij}\geq 2$ for $i\neq j$. A \emph{reduced expression} of an element $w\in W$ is an expression $w=s_{x_1}s_{x_2}\cdots s_{x_\ell}$ such that $\ell$ is minimal, and $\ell$ is called the \emph{length} of $w$. The \emph{Coxeter graph} $\Gamma$ of $(W,S)$ is the undirected graph with vertex set $S$ and undirected edges $\{s_i,s_j\}$ for each $m_{ij}>2$. Edges are labeled with $m_{ij}$, though labels of $3$ are usually suppressed because they are the most common in Coxeter theory. It is also possible for $m_{ij}$ to be infinity, meaning $s_is_j$ has infinite order, but this is never the case for the Coxeter groups we will work with in this paper. A \emph{Coxeter element} of $W$ is a product $\prod_{i=1}^r s_{\sigma(i)}$ for some permutation $\sigma\in S_r$, i.e.\ a product of all the generators, each used exactly once, in some order. The set of Coxeter elements is denoted $\C(W,S)$, or $\C(W)$ if $(W,S)$ is understood. 
\end{defn}

\begin{rem}\label{rem:universal}
  Fix an ordering $\{\tau_1,\dots,\tau_r\}$ of the $r=\binom{n}{2}$ generators of the toggle group $W_n$. There is a canonical quotient $W\to W_n$, sending $s_i\mapsto\tau_i$, where $W$ is the Coxeter group of rank $r$ whose Coxeter graph is the base-graph $\Gamma_n$ with all edge weights $6$. This is from Proposition \ref{prop:m}. 
  
 Though $W_n$ can be realized as the quotient of many Coxeter groups, $W$ is in some sense ``minimal'' in that it satisfies the following universal property (stated without proof): \emph{For the toggle group $W_n=\<\tau_1,\dots\tau_n\>$, the Coxeter group $W=\<s_1,\dots,s_n\>$ and quotient $f\colon s_i\mapsto\tau_i$ has the property that for any other Coxeter group $W'=\<s'_1,\dots,s'_n\>$ and quotient $g:s'_i\mapsto\tau_i$, there is a unique homomorphism $h:W'\to W$ such that $h\circ f=g$.}
\end{rem}




If $\omega$ is an acyclic orientation of a graph $\Gamma$, then the pair $(\Gamma,\omega)$ defines a canonical partial order $P(\Gamma,\omega)$ on the vertex set, where $i<_{P(\Gamma,\omega)}j$ if there is an $\omega$-directed path from $i$ to $j$. If $\Gamma$ is understood, then we denote this partial order by $P_\omega:=P(\Gamma,\omega)$, and say it is a \emph{poset over $\Gamma$}. Each Coxeter element $c\in\C(W,S)$ defines an acyclic orientation $\omega(c)$ of the Coxeter graph $\Gamma$: orient the edge $\{s_i,s_j\}$ as $s_i\to s_j$ iff $s_i$ appears before $s_j$ in $c$. Shi in~\cite[Proposition 1.3]{shi1997enumeration} shows that this acyclic orientation is well-defined, i.e.\ it depends only on $c$, rather than on a choice of reduced expression. Thus, each Coxeter element $c\in\C(W)$ defines a poset $P_{\omega(c)}=P(\Gamma,\omega(c))$ over the Coxeter graph. Conversely, the Coxeter elements $c=s_1\cdots s_r$  and $c'=s'_1\cdots s'_r$ are equal as group elements if and only if they are linear extensions of the same poset $P_\omega$. 

Vertices that are sources (respectively, sinks) in $\omega(c)$ are called \emph{initial} (respectively, \emph{terminal}) in $c$, and these are precisely the generators that appear first (resp.\ last) in some reduced expression of $c$. Notice that if $s$ is initial in $c$, then $s$ is terminal in $scs$,
which is a cyclic shift of some reduced expression for $c$, since
\[
s_{x_1}(s_{x_1}s_{x_2}\cdots s_{x_\ell})s_{x_1}=s_{x_2}\cdots s_{x_\ell}s_{x_1}\,.
\]
In other words, conjugating a Coxeter element $c$ by an initial generator $s$ cyclically shifts some reduced expression, and the corresponding acyclic orientations $\omega(c)$ and $\omega(scs)$ differ by converting a source into a sink. This generates an equivalence relation $\equiv$ on the set $\Acyc(\Gamma)$ of acyclic orientations, where we declare two acyclic orientations to be \emph{torically equivalent} if one can be obtained from the other by a sequence of these source-to-sink operations. The name comes from \cite{develin2016toric}, where these equivalence classes were formalized as a cyclic analogue of posets called \emph{toric posets}, via chambers of toric graphic hyperplane arrangements. In \cite{eriksson2009conjugacy}, Eriksson and Eriksson showed that $c,c'\in\C(W)$ are conjugate iff $\omega(c)\equiv\omega(c')$. Said differently, two Coxeter elements are conjugate if and only if one can be transformed into the other via a sequence of cyclic shifts and/or transpositions of commuting generators. Note that one direction of this statement is obvious, but the other direction is highly non-trivial.

In summary, for a fixed Coxeter system $(W,S)$, we have bijections between Coxeter elements and acyclic orientations, and between conjugacy classes and toric equivalence classes. Specifically, these bijections are defined by
\begin{equation}\label{eq:2bijections}
  \def\arraystretch{1.15}
  \begin{array}{cllllc}
    \Acyc(\Gamma)\longto\C(W,S) &&&&&
    \Acyc(\Gamma)/\!\!\equiv\;\longto \Conj(\C(W,S)) \\
     \omega\longmapsto w_1\cdots w_r &&&&& 
    [\omega]\longmapsto\cl_W(w_1\cdots w_r),
  \end{array}
\end{equation}
where $w_1\cdots w_r$ is any linear extension of $P_\omega$ and $\cl_W(w_1\cdots w_r)$ is its conjugacy class in $W$. 


In what follows, we will use the notation and terminology of Coxeter groups to talk about quotients of Coxeter groups, i.e.\ groups generated by involutions. That is, when we speak of $(W,S)$, all that is assumed is that the group $W$ is generated by a finite set $S\subset W$ of involutions. The Coxeter graph $\Gamma$ is defined as before, and the edge weights of $\Gamma$ are $m_{i,j}:=|s_is_j|$. Other standard terms such as the set $\C(W)$ of Coxeter elements, reduced expressions, the length of an element, initial and terminal generators, and the acyclic orientation of a Coxeter element, easily and unambiguously carry over. Anytime we are specifically assuming that $W$ is a Coxeter group, we will make this clear. 

Any two Coxeter elements that arise as linear extensions of the same acyclic orientation are clearly equal as elements in $W$. Moreover, two Coxeter elements that are linear extensions of torically equivalent orientations will be conjugate as group elements. This is the ``obvious'' direction of the Erikssons' aforementioned theorem; the converse need not hold for non-Coxeter groups.

In particular, this means that if $W$ is a Coxeter group, and $W'$ a quotient of $W$ (e.g., $W'=W_n$), we have the following commutative diagrams.
\begin{equation}\label{eqn:quotient}
 \xymatrix{\Acyc(\Gamma) \ar[r]^\cong\ar[dr] & \C(W)\ar[d] \\ & \C(W')}
 \hspace{12mm}\xymatrix{\Acyc(\Gamma)/\!\!\equiv \ar[r]^\cong\ar[dr] & \Conj(\C(W))\ar[d] \\ & \Conj(\C(W'))}
\end{equation}
It could happen that two Coxeter elements that are not linear extensions of the same orientation are nevertheless equal in $W'$. Similarly, it could be the case that two Coxeter elements arising from non-torically equivalent orientations could happen to be conjugate for non-Coxeter-theoretic reasons.

Recall that when we are speaking of the toggle group $W_n$, we will assume that a product of toggles, such as $\tau_{i,j}\tau_{k,\ell}$, is performed right-to-left, as in function composition. The following is a direct consequence of the commutative diagrams in Eq.~\eqref{eqn:quotient}.

\begin{prop}\label{prop:columns-and-rows}
The Coxeter element in $W_n$ defined by ``toggling by columns'' (left-to-right, reading each column from top-to-bottom) is the same as the Coxeter element defined by ``toggling by rows'' (top-to-bottom, reading each row from left-to-right). That is, 
\[
\tau_{n-1,n}\tau_{n-2,n}\tau_{n-3,n}\cdots\tau_{1,4}\tau_{2,3}\tau_{1,3}\tau_{1,2}=\tau_{n-1,n}\tau_{n-2,n}\tau_{n-2,n-1}\cdots\tau_{1,5}\tau_{1,4}\tau_{1,3}\tau_{1,2}.
\]
\end{prop}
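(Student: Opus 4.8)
The plan is to invoke the correspondence developed in Section~\ref{sec:coxeter}. Both sides of the claimed identity are products in which each of the $\binom{n}{2}$ toggles occurs exactly once, so both are Coxeter elements of $W_n$, and each induces an acyclic orientation of the base graph $\Gamma_n$. By the first commutative diagram in Eq.~\eqref{eqn:quotient} (applied with $W$ the Coxeter group of Remark~\ref{rem:universal}, whose Coxeter graph is $\Gamma_n$, and the quotient $W'=W_n$), any two Coxeter elements that are linear extensions of the same acyclic orientation of $\Gamma_n$ coincide in $W_n$. So everything reduces to showing that ``toggling by columns'' and ``toggling by rows'' induce the same orientation of $\Gamma_n$; equivalently, \emph{for every edge $\{\tau_{a,b},\tau_{c,d}\}$ of $\Gamma_n$, the toggle $\tau_{a,b}$ precedes $\tau_{c,d}$ in the columns word if and only if it precedes $\tau_{c,d}$ in the rows word} (the choice of which reading direction counts as ``precedes'' is immaterial, provided it is the same on both sides).

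To carry this out I would first recall, from Proposition~\ref{prop:commuting-toggles}, that the edges of $\Gamma_n$ are of exactly three kinds: \emph{horizontal} edges $\{\tau_{i,j},\tau_{i,\ell}\}$ with $j<\ell$ (left-nesting, same row), \emph{vertical} edges $\{\tau_{i,\ell},\tau_{k,\ell}\}$ with $i<k$ (right-nesting, same column), and \emph{diagonal} edges $\{\tau_{i,j},\tau_{k,\ell}\}$ with $i<k<j<\ell$ (crossing). In the columns word, $\tau_{a,b}$ precedes $\tau_{c,d}$ iff $b<d$, or $b=d$ and $a<c$; in the rows word, $\tau_{a,b}$ precedes $\tau_{c,d}$ iff $a<c$, or $a=c$ and $b<d$. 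Now I would run through the three edge types. For a horizontal edge, the right endpoints satisfy $j<\ell$, so $\tau_{i,j}$ precedes $\tau_{i,\ell}$ in the columns word; the left endpoints are equal and $j<\ell$, so $\tau_{i,j}$ also precedes $\tau_{i,\ell}$ in the rows word. For a vertical edge, the columns are equal and $i<k$, so $\tau_{i,\ell}$ precedes $\tau_{k,\ell}$ in the columns word, and $i<k$ likewise forces $\tau_{i,\ell}$ before $\tau_{k,\ell}$ in the rows word. For a diagonal edge, $j<\ell$ governs the columns-word order and $i<k$ governs the rows-word order, and both put $\tau_{i,j}$ before $\tau_{k,\ell}$. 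Hence the two orientations agree on every edge.

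Finally, since both words yield a topological ordering of this common orientation, it is acyclic; thus the two Coxeter elements are linear extensions of one and the same acyclic orientation of $\Gamma_n$ (the ``east, south, southeast'' orientation in the vocabulary at the end of Section~\ref{sec:toggling}), and the proposition follows from the commutative diagram cited above.

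I do not expect any substantial obstacle here: the argument is essentially forced once the Coxeter-theoretic dictionary is in place. The one place to be careful is the diagonal (crossing) edges --- this is the only case where the columns word and the rows word compare \emph{different} coordinates of the two toggles (right endpoints versus left endpoints), so it is precisely there that one must verify the two comparisons nonetheless deliver the same verdict, which they do because $i<k<j<\ell$ makes $j<\ell$ and $i<k$ hold at the same time.
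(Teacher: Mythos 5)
Your proposal is correct and follows exactly the paper's approach: the paper's proof is the single sentence that both words are linear extensions of the ``east, south, southeast'' acyclic orientation of $\Gamma_n$, which you verify in detail by checking the three edge types. The edge-by-edge verification you supply (including the careful handling of the diagonal/crossing case) is precisely the routine check the paper leaves implicit.
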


\begin{proof}
Both of these are linear extensions of the same acyclic orientation of $\Gamma$, namely the one that orients all edges east, and south, and southeast. 
\end{proof}

\begin{lem}
  In the toggle group $W_n$, Coxeter elements have length $\ell=\binom{n}{2}$.
\end{lem}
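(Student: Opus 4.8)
The plan is to bound $\ell$ from both sides. The inequality $\ell\le\binom n2$ is immediate from the definition: a Coxeter element of $W_n$ is, by construction, a product of all $r=\binom n2$ toggles, each used exactly once, so it admits an expression of length $\binom n2$. The work is in the lower bound $\ell\ge\binom n2$, which I would prove by contradiction. Suppose $c$ is a Coxeter element with a reduced expression $c=\tau_{a_1}\cdots\tau_{a_k}$ of length $k<\binom n2$. Then fewer than $r$ distinct toggles appear in this expression, so some toggle $\tau_{i,j}$ is omitted altogether. Writing $W_n^{(i,j)}$ for the subgroup of $W_n$ generated by all toggles \emph{except} $\tau_{i,j}$, this says $c\in W_n^{(i,j)}$, and I will show that is impossible.

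The mechanism is a ``support is well-defined'' phenomenon specific to toggles. From Definition~\ref{def:toggle_NC(n)}, for any arc $(k,\ell)\ne(i,j)$ the toggle $\tau_{k,\ell}$ alters a noncrossing partition only in whether the arc $(k,\ell)$ is present; in particular $(i,j)\in\tau_{k,\ell}(P)$ if and only if $(i,j)\in P$, for every $P\in\NC(n)$. Composing such maps, every $g\in W_n^{(i,j)}$ has the property that $(i,j)\in g(P)$ if and only if $(i,j)\in P$, for all $P\in\NC(n)$; say that such a $g$ \emph{preserves the $(i,j)$-status}.

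So it suffices to exhibit, for each $i<j$, a noncrossing partition whose $(i,j)$-status is changed by $c$. Since each toggle occurs exactly once in the Coxeter word, factor $c=x\,\tau_{i,j}\,y$ where $x$ and $y$ are (possibly empty) products of toggles other than $\tau_{i,j}$, so that $x,y\in W_n^{(i,j)}$. Take $P_0:=y^{-1}(\emptyset)$, the unique noncrossing partition with $y(P_0)=\emptyset$. Then $\emptyset$ does not contain the arc $(i,j)$, so neither does $P_0$ (as $y\in W_n^{(i,j)}$); but $\tau_{i,j}(\emptyset)=\{(i,j)\}$ does contain it, and applying $x\in W_n^{(i,j)}$ keeps it, so $c(P_0)=x(\tau_{i,j}(y(P_0)))$ contains $(i,j)$. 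Hence $c$ flips the $(i,j)$-status of $P_0$, contradicting $c\in W_n^{(i,j)}$. This forces $\ell\ge\binom n2$, and together with the upper bound, $\ell=\binom n2$.

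The point requiring care — and the reason this is not an instant corollary of Coxeter theory — is that $W_n$ is only a quotient of a Coxeter group (Remark~\ref{rem:universal}), so the standard fact that a Coxeter element of a genuine Coxeter group has length equal to its rank is not directly available; a priori a relation in $W_n$ could shorten a Coxeter word. The argument above sidesteps this using the concrete feature of the toggle action that $\tau_{i,j}$ touches only the single arc $(i,j)$, which is precisely what pins the ``support'' of a Coxeter element to the full generating set. I do not expect a serious obstacle beyond stating this observation correctly.
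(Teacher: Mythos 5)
Your proof is correct and follows essentially the same route as the paper: both arguments rest on the observation that only $\tau_{i,j}$ can change whether the arc $(i,j)$ is present, so it suffices to exhibit a partition whose $(i,j)$-status is changed by $c$. The only cosmetic difference is the witness — the paper takes any $P$ containing $(i,j)$ and notes that applying every toggle once removes that arc, whereas you pull back the empty partition through the tail of the Coxeter word and show the arc gets added.
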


\begin{proof}
  Let $c$ be a Coxeter element in $W_n$, which is the product of $\binom{n}{2}$ toggles, so $\ell(c)\leq\binom{n}{2}$. To show that equality holds, it suffices to show that each toggle must appear in every reduced expression of $c$. Given any $i<j$, if $P$ contains $(i,j)$, then applying every toggle once to $P$ (in any order) will remove $(i,j)$. Therefore, any expression for $c$ as a product of toggles must contain $\tau_{i,j}$.
\end{proof}

Recall that conjugating a Coxeter element $c$ by an initial generator corresponds to performing a source-to sink operation on the acyclic orientation $\omega(c)$ of $\Gamma$. We define a \emph{$c$-admissible sequence} to be any sequence of generators that arises as a valid sequence of source-to-sink conversions on $\omega(c)$. 
\begin{defn}
Let $W$ be a group generated by a set $S$ of involutions, and $c\in W$ a fixed Coxeter element. A \emph{$c$-admissible sequence} is any sequence of generators $s_{x_1},\dots,s_{x_m}$ such that $s_{x_1}$ is a source of $\omega(c)$, $s_{x_2}$ is a source of $\omega(s_{x_1}cs_{x_1})$, $s_{x_3}$ is a source of $\omega(s_{x_2}s_{x_1}cs_{x_1}s_{x_2})$, and so on. 

Every $c$-admissible sequence defines a canonical group element $a=s_{x_1}\cdots s_{x_m}$ in $W$, and we say that $a^{-1}ca$ is an \emph{admissible conjugation} of $c$.
\end{defn}


The main theorem of \cite{speyer2009powers} is that if $W$ is an infinite irreducible (that is, $\Gamma$ is connected) Coxeter group, and $s_{x_1},\dots,s_{x_m}$ is a $c$-admissible sequence, then $s_{x_1}\cdots s_{x_m}$ is reduced in $W$. This was the first proof that powers of Coxeter elements are reduced. The utility of $c$-admissible sequences in this paper is that they preserve the number of times a particular arc appears in an orbit. As a result, the homomesies we prove in this paper are preserved under conjugation by a $c$-admissible sequence. 

\section{Homomesy} \label{sec:homomesy}

\begin{defn}\label{defn:homomesic}
Let $X$ be a finite set, $A$ a $\QQ$-vector space (frequently a field such as $\RR$), $f:X\to A$ a function or ``statistic,'' and $T:X\to X$ a bijective function. Then we say that the triple $(X,f,T)$ is \emph{homomesic} if there exists some $c\in A$ so that, for any $T$-orbit $\mcO$, 
\[
\frac{1}{\#\mcO} \sum_{x\in\mcO} f(x)=c.
\] 
We call $c$ the \emph{mean}, and we say that $(X,f,T)$ is $c$-mesic. 
\end{defn}

Even though we use $c$ to denote a Coxeter element, and the mean of a homomesic function, it should always be clear from the context to which we are referring.

\begin{defn} 
The \emph{arc count statistic} $\alpha(P)$ of a noncrossing partition is the number of pairs $(i,j)$ with $1\le i<j\le n$ appearing in $P$. 
\end{defn}

The \emph{block count statistic} $\beta(P)$ can be defined similarly; 
$\beta(P)=|\pi|$ where $\pi$ is the block representation
of the noncrossing partition $P$.
Since a block with $k$ elements contains $k-1$ arcs, it follows that $\alpha(P)+\beta(P)=n$.

We also need to define a certain action on $\NC(n)$.

\begin{defn} An element $w\in W_n$ is called a \emph{partial Coxeter element} if it can be written as $w=\tau_{a_k}\tau_{a_{k-1}}\cdots \tau_{a_1}$, where each $\tau_{a_i}$ is a toggle at some arc, and $a_i\not=a_j$ if $i\not=j$. In other words, each arc appears as a toggle in $w$ at most once, but some might not appear in $w$ at all. \end{defn}

\begin{rem} Much of the theory of Coxeter elements also makes sense for partial Coxeter elements. For instance, a partial Coxeter element $c$ determines an acyclic orientation of the subgraph of $\Gamma_n$ consisting of all edges corresponding to the involutions contained in $c$. We may also talk about admissible conjugations of partial Coxeter elements: a conjugation $s^{-1}cs$ of a partial Coxeter element, for some $s\in W_n$, is admissible if $s^{-1}cs$ is a partial Coxeter element. \end{rem}

The main point of this paper is to understand the distribution of $\alpha$ and its variants in $w$-orbits of $\NC(n)$ for partial Coxeter elements $w\in W_n$. In particular, we show the following:

\begin{thm} \label{thm:arccounthomomesy} Let $w\in W_n$ be any partial Coxeter element that contains every toggle of the form $\tau_{i,i+1}$. Then the triple $(\NC(n),\alpha,w)$ is $\frac{n-1}{2}$-mesic.  This implies also that $(\NC(n),\beta,w)$ is $\frac{n+1}{2}$-mesic. \end{thm}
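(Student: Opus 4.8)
The plan is to reduce the statement to a single clean fact about orbits and then use the structural results already set up in the paper. First I would observe that it suffices to prove the $\alpha$-statement, since $\alpha(P)+\beta(P)=n$ is a pointwise identity, so $(\NC(n),\beta,w)$ is automatically $\left(n-\tfrac{n-1}{2}\right)$-mesic $=\tfrac{n+1}{2}$-mesic once the $\alpha$-claim is known. So the whole content is: for a partial Coxeter element $w$ containing all the ``short'' toggles $\tau_{i,i+1}$, every $w$-orbit $\mcO$ satisfies $\frac{1}{\#\mcO}\sum_{P\in\mcO}\alpha(P)=\frac{n-1}{2}$.

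The natural strategy is to prove this \emph{arc by arc}. For a fixed pair $(i,j)$ with $1\le i<j\le n$, let $\chi_{i,j}(P)$ be the indicator that $(i,j)\in P$, so $\alpha=\sum_{i<j}\chi_{i,j}$ and by linearity it is enough to show that for each orbit $\mcO$ the average of $\chi_{i,j}$ over $\mcO$ is $\frac{1}{n-1}$ when $(i,j)$ ranges over... wait, that cannot be right since there are $\binom{n}{2}$ pairs. Instead the right bookkeeping is: I would show that for each $k$ with $1\le k\le n-1$, the average over a $w$-orbit of the number of arcs $(i,j)$ with $j-i=k$ is the \emph{same} for every orbit; call this common value $a_k$. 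Summing $\sum_{k=1}^{n-1}a_k$ must then equal the overall average, and a separate count (or the Kreweras/Simion--Ullman symmetry, or Proposition~\ref{prop:numberoffixedpartitions_depend_on_j_minus_i}) pins down what the $a_k$ must be, giving $\frac{n-1}{2}$ in total. The cleanest route, though, is probably to prove directly that the per-orbit average of $\chi_{i,j}$ depends only on the \emph{type data} $(i,j)$ in a way that, after summing, telescopes; in particular I expect the key lemma to be that for \emph{each} individual arc $(i,j)$, the orbit-average of $\chi_{i,j}$ is independent of the orbit (a ``fiber-wise'' homomesy), and then that these averages sum to $\frac{n-1}{2}$.

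To establish the fiber-wise statement I would exploit the hypothesis that $w$ contains every $\tau_{i,i+1}$ together with the admissible-conjugation machinery recalled at the end of Section~\ref{sec:coxeter}: conjugating $w$ by an initial generator preserves, for each arc, the multiset of values $\chi_{i,j}$ takes along an orbit, so the orbit-average of $\chi_{i,j}$ is an invariant of the toric-equivalence class of $\omega(w)$, and more importantly is unchanged when we cyclically reorder which toggle we ``apply first.'' This lets me put any chosen toggle, in particular $\tau_{i,j}$ itself, at a convenient position and track how arc $(i,j)$ enters and leaves along the orbit. The core combinatorial step is then a local analysis at the moment $\tau_{i,j}$ is applied: using Proposition~\ref{prop:commuting-toggles} and the classification of the six arc types, plus the fact (from Proposition~\ref{prop:numberoffixedpartitions_depend_on_j_minus_i}) that $|\NC(n)_{i,i+k}|$ depends only on $k$ and has the palindromic symmetry $|\NC(n)_{i,i+k}|=|\NC(n)_{i,i+n+1-k}|$, I would argue that the presence of all short toggles forces enough ``mixing'' that the proportion of time $(i,j)$ is present in an orbit is forced to a value symmetric under $k\leftrightarrow n+1-k$, and summing over all arcs gives $\binom{n}{2}\cdot\frac{n-1}{n(n-1)/2}\cdot\frac12$... more honestly, the total is forced to be $\frac{n-1}{2}$ by pairing arc-length $k$ with arc-length $n+1-k$ so that the two contributions sum to a constant.

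The main obstacle I anticipate is the fiber-wise homomesy for a \emph{single} arc: it is not obvious a priori that the orbit-average of $\chi_{i,j}$ is orbit-independent, and in fact I suspect it may genuinely fail for individual arcs, so that one must work with the grouped statistic (sum over arcs of a fixed length, or even the full $\alpha$) from the start and find a global argument rather than a purely local one. Concretely, I expect the real proof to build an explicit bijection, or a sign-reversing/weight-preserving involution, between the orbit and a symmetric object — for instance pairing $P$ in an orbit with something like its image under a complementation-type map so that $\alpha(P)+\alpha(P^\ast)=n-1$ forces the average — and the hard part will be checking that this pairing is compatible with the $w$-dynamics for every partial Coxeter $w$ containing the short toggles, not just for the ``nicest'' Coxeter elements. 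Handling the general partial Coxeter element $w$ (missing many toggles) rather than a full Coxeter element, while only assuming the $\tau_{i,i+1}$ are present, is where I expect the argument to require the most care.
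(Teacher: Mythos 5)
There is a genuine gap: you correctly suspect that the per-arc statistic $\chi_{i,j}$ is not homomesic (the paper confirms this with an explicit counterexample on $\NC(3)$), but the fallback decompositions you propose — grouping arcs by their length $j-i$, or pinning down averages via the counts $|\NC(n)_{i,i+k}|$ and their palindromic symmetry — do not lead anywhere. Those counts are statements about the full set $\NC(n)$, not about individual $w$-orbits, so they cannot determine a per-orbit average; and there is no reason the total number of arcs of a fixed length should be orbit-independent. The speculative involution $P\mapsto P^\ast$ with $\alpha(P)+\alpha(P^\ast)=n-1$ is also not how the argument goes (the paper explicitly notes that $\alpha(P)+\alpha(w(P))\ne n-1$ in general). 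What is missing is the correct intermediate statistic: for each $k\in[n-1]$, set
\[
\psi_k(P)=2\chi_{k,k+1}(P)+\sum_{i<k}\chi_{i,k+1}(P)+\sum_{j>k+1}\chi_{k,j}(P),
\]
i.e.\ sum the indicators over all arcs with left endpoint $k$ or right endpoint $k+1$, double-counting the short arc $(k,k+1)$. Since any two such arcs are pairwise crossing or half-nesting, at most one can be present, so $\psi_k\in\{0,1,2\}$, and $\alpha=\tfrac12\sum_{k=1}^{n-1}\psi_k$ because every long arc lies in exactly two of these ``crosses'' and every short arc is counted twice in one of them.

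The heart of the proof is then showing $\psi_k$ is $1$-mesic for any partial Coxeter word $T$ containing $\tau_{k,k+1}$, which amounts to showing that in each orbit the number of $P$ with $\psi_k(P)=0$ equals the number with $\psi_k(P)=2$. This is a direct dynamical alternation argument: after using admissible conjugation (which, as you correctly note, preserves the orbit multiset of each $\chi_{i,j}$) to assume $\tau_{k,k+1}$ is applied last among the relevant toggles, one checks that from a state with $\psi_k=2$ one step of $T$ lands on $\psi_k=0$, and from a state with $\psi_k=0$ or $1$ the next non-$1$ value reached is $2$; hence the $0$'s and $2$'s alternate around the orbit. Your proposal has the right reduction to $\alpha$, the right use of conjugation, and the right instinct that a purely local per-arc argument fails, but without identifying $\psi_k$ (or an equivalent grouping into cliques of mutually exclusive arcs) the plan cannot be completed as written.
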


\begin{ex}
Figure \ref{example} shows the five orbits of $w=\tau_{3,4}\tau_{1,2}\tau_{2,3}\tau_{1,4}$ on $\NC(4)$.  Note that $w$ satisfies the necessary conditions in Theorem \ref{thm:arccounthomomesy} but is not a Coxeter element, since it does not contain $\tau_{1,3}$ or $\tau_{2,4}$.  The figure shows that $(\NC(4),\alpha,w)$ is $\frac{3}{2}$-mesic.

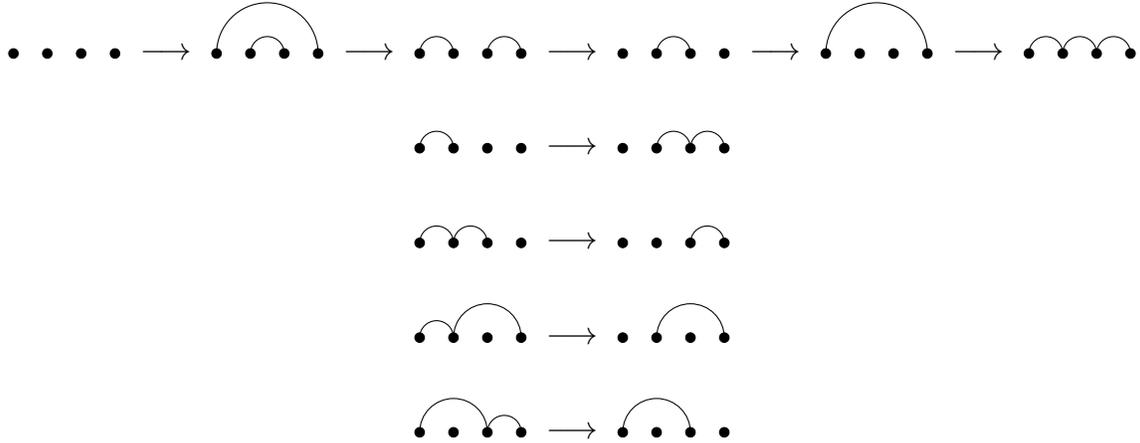
\begin{figure}
\begin{center}
\begin{tikzpicture}[scale=.9]
\filldraw (0,0) circle (2 pt);
\filldraw (0.5,0) circle (2 pt);
\filldraw (1,0) circle (2 pt);
\filldraw (1.5,0) circle (2 pt);

\filldraw (3,0) circle (2 pt);
\filldraw (3.5,0) circle (2 pt);
\filldraw (4,0) circle (2 pt);
\filldraw (4.5,0) circle (2 pt);

\filldraw (6,0) circle (2 pt);
\filldraw (6.5,0) circle (2 pt);
\filldraw (7,0) circle (2 pt);
\filldraw (7.5,0) circle (2 pt);

\filldraw (9,0) circle (2 pt);
\filldraw (9.5,0) circle (2 pt);
\filldraw (10,0) circle (2 pt);
\filldraw (10.5,0) circle (2 pt);

\filldraw (12,0) circle (2 pt);
\filldraw (12.5,0) circle (2 pt);
\filldraw (13,0) circle (2 pt);
\filldraw (13.5,0) circle (2 pt);

\filldraw (15,0) circle (2 pt);
\filldraw (15.5,0) circle (2 pt);
\filldraw (16,0) circle (2 pt);
\filldraw (16.5,0) circle (2 pt);

\draw (3,0) arc (180:0:0.75cm);
\draw (3.5,0) arc (180:0:0.25cm);
\draw (6,0) arc (180:0:0.25cm);
\draw (7,0) arc (180:0:0.25cm);
\draw (9.5,0) arc (180:0:0.25cm);
\draw (12,0) arc (180:0:0.75cm);
\draw (15,0) arc (180:0:0.25cm);
\draw (15.5,0) arc (180:0:0.25cm);
\draw (16,0) arc (180:0:0.25cm);

\node at (2.25, 0) {$\longrightarrow$};
\node at (5.25, 0) {$\longrightarrow$};
\node at (8.25, 0) {$\longrightarrow$};
\node at (11.25, 0) {$\longrightarrow$};
\node at (14.25, 0) {$\longrightarrow$};
\node at (8.25, -1.4) {$\longrightarrow$};
\node at (8.25, -2.8) {$\longrightarrow$};
\node at (8.25, -4.2) {$\longrightarrow$};
\node at (8.25, -5.6) {$\longrightarrow$};

\filldraw (6,-1.4) circle (2 pt);
\filldraw (6.5,-1.4) circle (2 pt);
\filldraw (7,-1.4) circle (2 pt);
\filldraw (7.5,-1.4) circle (2 pt);

\filldraw (9,-1.4) circle (2 pt);
\filldraw (9.5,-1.4) circle (2 pt);
\filldraw (10,-1.4) circle (2 pt);
\filldraw (10.5,-1.4) circle (2 pt);

\filldraw (6,-2.8) circle (2 pt);
\filldraw (6.5,-2.8) circle (2 pt);
\filldraw (7,-2.8) circle (2 pt);
\filldraw (7.5,-2.8) circle (2 pt);

\filldraw (9,-2.8) circle (2 pt);
\filldraw (9.5,-2.8) circle (2 pt);
\filldraw (10,-2.8) circle (2 pt);
\filldraw (10.5,-2.8) circle (2 pt);

\filldraw (6,-4.2) circle (2 pt);
\filldraw (6.5,-4.2) circle (2 pt);
\filldraw (7,-4.2) circle (2 pt);
\filldraw (7.5,-4.2) circle (2 pt);

\filldraw (9,-4.2) circle (2 pt);
\filldraw (9.5,-4.2) circle (2 pt);
\filldraw (10,-4.2) circle (2 pt);
\filldraw (10.5,-4.2) circle (2 pt);

\filldraw (6,-5.6) circle (2 pt);
\filldraw (6.5,-5.6) circle (2 pt);
\filldraw (7,-5.6) circle (2 pt);
\filldraw (7.5,-5.6) circle (2 pt);

\filldraw (9,-5.6) circle (2 pt);
\filldraw (9.5,-5.6) circle (2 pt);
\filldraw (10,-5.6) circle (2 pt);
\filldraw (10.5,-5.6) circle (2 pt);

\draw (6,-1.4) arc (180:0:0.25cm);
\draw (9.5,-1.4) arc (180:0:0.25cm);
\draw (10,-1.4) arc (180:0:0.25cm);
\draw (6,-2.8) arc (180:0:0.25cm);
\draw (6.5,-2.8) arc (180:0:0.25cm);
\draw (10,-2.8) arc (180:0:0.25cm);
\draw (6,-4.2) arc (180:0:0.25cm);
\draw (6.5,-4.2) arc (180:0:0.5cm);
\draw (9.5,-4.2) arc (180:0:0.5cm);
\draw (7,-5.6) arc (180:0:0.25cm);
\draw (6,-5.6) arc (180:0:0.5cm);
\draw (9,-5.6) arc (180:0:0.5cm);
\end{tikzpicture}
\end{center}
\caption{The five orbits of $w=\tau_{3,4}\tau_{1,2}\tau_{2,3}\tau_{1,4}$ on $\NC(4)$.  Notice that in any orbit, the average of the arc count is $\frac{3}{2}$.  Also notice that in general, $\alpha(P)+\alpha(w(P))\not=3$, as is the case for Kreweras complementation (see Section \ref{sec:kreweras}).}
\label{example}
\end{figure}
\end{ex}

The following corollary is a special case of Theorem \ref{thm:arccounthomomesy}.

\begin{cor} If $w\in W_n$ is a Coxeter element, then $(\NC(n),\alpha,w)$ is $\frac{n-1}{2}$-mesic and $(\NC(n),\beta,w)$ is $\frac{n+1}{2}$-mesic. \end{cor}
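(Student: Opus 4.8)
The plan is to prove Theorem~\ref{thm:arccounthomomesy} directly; the corollary is then immediate, since a Coxeter element is a partial Coxeter element that contains \emph{every} toggle, in particular all $\tau_{i,i+1}$. So I focus on the theorem. The target statement says: for a partial Coxeter element $w$ containing all the ``short'' toggles $\tau_{i,i+1}$, every $w$-orbit has the same average of $\alpha$, namely $(n-1)/2$; the $\beta$ statement follows at once from $\alpha(P)+\beta(P)=n$.

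\textbf{Reduction via admissibility and decomposing $\alpha$.} First I would record that $\alpha(P)=\sum_{1\le i<j\le n}\chi_{i,j}(P)$, where $\chi_{i,j}(P)$ is the indicator that the arc $(i,j)$ lies in $P$. By linearity of the orbit-average, it suffices to show that each of the (easier, ``local'') statistics behaves well; concretely I would aim to show that the orbit-sum of $\alpha$ can be computed arc-by-arc. The key structural tool is the remark at the end of Section~\ref{sec:coxeter}: conjugating $w$ by a $c$-admissible sequence preserves, within each orbit, the number of times any given arc appears; hence the orbit-distribution of each $\chi_{i,j}$, and therefore of $\alpha$, is unchanged under admissible conjugation. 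This lets me replace $w$ by any admissible conjugate, i.e.\ I am free to run the source-to-sink dynamics on the acyclic orientation of the subgraph of $\Gamma_n$ supported on $w$, and to choose whatever reduced expression is most convenient. In particular I may assume $w$ has a specified first or last toggle when that helps.

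\textbf{The core computation: how a single $w$-step changes $\alpha$, and pairing arcs.} The heart of the argument is to understand $\alpha(P)-\alpha(w(P))$ and to show it telescopes around each orbit to zero after subtracting the constant $n-1$ per step --- equivalently that $\sum_{P\in\mcO}\alpha(P)=\frac{n-1}{2}\#\mcO$. I expect the right bookkeeping is at the level of individual positions $1,\dots,n$ rather than arcs: for each $P$ and each $k\in[n]$, let $d_k(P)\in\{0,1\}$ record whether position $k$ is a left endpoint of some arc of $P$ (equivalently, whether $k$ is not the largest element of its block), and similarly track right endpoints. Since $\alpha(P)$ counts arcs, $\alpha(P)=\sum_k d_k(P)$. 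The plan is to show that the hypothesis ``$w$ contains every $\tau_{i,i+1}$'' forces, for each $k$, a clean two-state behavior of position $k$ along the orbit --- roughly, that the short toggles act as a ``coupling'' that makes position $k$ spend exactly half its time as a left endpoint --- and hence contributes average $1/2$ to $\alpha$; summing over the $n-1$ positions that can be left endpoints gives $(n-1)/2$. Making this precise is where one must use the commutation analysis (Proposition~\ref{prop:commuting-toggles}, Corollary~\ref{do not commute}) together with admissible reorderings to push $\tau_{k,k+1}$ to a convenient spot in the word, and then argue that toggling $(k,k+1)$ flips a well-defined ``parity'' attached to position $k$ that no other toggle in the orbit disturbs once things are arranged correctly.

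\textbf{Main obstacle.} The serious difficulty is precisely that last bracketing step: partial Coxeter elements are much less rigid than full Coxeter elements, so the orbit structure is not governed by a global ``promotion''-type picture, and one cannot simply invoke $\alpha(P)+\alpha(w(P))=n-1$ (the example after the theorem explicitly warns this fails). I expect the proof will instead have to be inductive --- stripping off an initial or terminal short toggle $\tau_{i,i+1}$ and relating $w$-orbits on $\NC(n)$ to $w'$-orbits on a smaller configuration space (perhaps $\NC(n-1)$, or $\NC(n)$ with positions $i,i+1$ merged) --- and the technical crux will be showing that this reduction is compatible with both the orbit structure \emph{and} the arc-count statistic, i.e.\ that the ``missing'' contribution from the removed toggle is exactly the constant $1$ per orbit-step that the induction needs. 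Setting up that reduction cleanly, and verifying that the partial-Coxeter admissibility machinery survives it, is the part I would budget the most effort for; the rest (linearity, the $\alpha+\beta=n$ translation to $\beta$, and the corollary) is routine.
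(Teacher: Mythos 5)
You are right that the corollary is immediate from Theorem~\ref{thm:arccounthomomesy}, and your use of admissible conjugation to reposition toggles matches the paper. But the core of your plan contains a genuine gap: the decomposition $\alpha=\sum_k d_k$, with $d_k(P)$ the indicator that position $k$ is a left endpoint, together with the claim that each $d_k$ is $\tfrac12$-mesic, is false. Take $n=3$ and the Coxeter element $w=\tau_{1,3}\tau_{2,3}\tau_{1,2}$ (so this is even the setting of the corollary itself). Its orbits on $\NC(3)$ are $\{\varnothing,\{(1,2),(2,3)\},\{(1,3)\}\}$ and $\{\{(1,2)\},\{(2,3)\}\}$; the statistic $d_1=\chi_{1,2}+\chi_{1,3}$ has orbit-averages $\tfrac23$ and $\tfrac12$ respectively, so it is not homomesic. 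Nothing in your sketch repairs this, because the failure is not about choosing a good reduced word: no admissible conjugation changes the orbit-sums of the $\chi_{i,j}$, hence none changes the orbit-averages of $d_k$.

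The fix, which is what the paper actually does, is to couple ``$k$ is a left endpoint'' with ``$k+1$ is a right endpoint'': set $\psi_k=2\chi_{k,k+1}+\sum_{i<k}\chi_{i,k+1}+\sum_{j>k+1}\chi_{k,j}$, so that $\alpha=\tfrac12\sum_{k=1}^{n-1}\psi_k$. The crucial structural fact is that all arcs of the form $(k,j)$ or $(i,k+1)$ pairwise conflict (each pair is crossing, left-nesting, or right-nesting), so $\psi_k\in\{0,1,2\}$, with value $2$ exactly when $(k,k+1)\in P$ and value $0$ exactly when no such arc is present. One then conjugates admissibly so that $\tau_{k,k+1}$ is applied last and checks, by tracking which of these mutually exclusive arcs can be inserted during one application of $T$, that along any orbit the partitions with $\psi_k=0$ and those with $\psi_k=2$ alternate (ignoring the $\psi_k=1$ states in between); hence $\psi_k$ is $1$-mesic and $\alpha$ is $\tfrac{n-1}{2}$-mesic. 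In the counterexample above, $\psi_1=2\chi_{1,2}+\chi_{1,3}$ indeed averages $1$ on both orbits. Your instinct to work ``locally'' near each position $k$ is sound, and the hypothesis that $w$ contains every $\tau_{i,i+1}$ is used exactly where you expect (the doubled weight on $(k,k+1)$ and the requirement that its toggle occur), but the statistic must be the two-sided $\psi_k$, not the one-sided $d_k$; your proposed inductive/telescoping alternative is not developed enough to substitute for this.
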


Hence the arc count statistic is \emph{simultaneously} homomesic for all Coxeter elements and partial Coxeter elements that contain every $\tau_{i,i+1}$. We will also show that there are more refined statistics that are homomesic for certain partial Coxeter elements.

Another consequence of Theorem \ref{thm:arccounthomomesy} is the following.
\begin{cor}\label{cor:eveneven}
Let $n$ be even and $w\in W_n$ be any partial Coxeter element that contains every toggle of the form $\tau_{i,i+1}$.  Then each $w$-orbit of $\NC(n)$ contains an even number of noncrossing partitions. 
\end{cor}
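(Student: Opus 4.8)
The plan is to deduce Corollary~\ref{cor:eveneven} from Theorem~\ref{thm:arccounthomomesy} by a parity argument. Fix an even $n$ and a partial Coxeter element $w$ containing every $\tau_{i,i+1}$, and let $\mcO$ be a $w$-orbit of size $N=\#\mcO$. By Theorem~\ref{thm:arccounthomomesy}, $\frac{1}{N}\sum_{P\in\mcO}\alpha(P)=\frac{n-1}{2}$, so $\sum_{P\in\mcO}\alpha(P)=\frac{(n-1)N}{2}$. Since $n$ is even, $n-1$ is odd, and for this sum (an integer) to equal $\frac{(n-1)N}{2}$ we need $N$ to be even. That is the whole argument at a high level; the corollary is essentially a divisibility observation forced by the fact that the mean $\frac{n-1}{2}$ is a half-integer but not an integer when $n$ is even.

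More carefully, I would phrase it as follows. First I would observe that $\alpha(P)$ is a nonnegative integer for every $P\in\NC(n)$, so $\Sigma:=\sum_{P\in\mcO}\alpha(P)\in\ZZ$. Next, Theorem~\ref{thm:arccounthomomesy} gives $2\Sigma=(n-1)N$. Now $n$ even forces $n-1$ odd, hence $\gcd(2,n-1)=1$; since $2\mid (n-1)N$ and $2\nmid (n-1)$, we conclude $2\mid N$, i.e.\ $N$ is even. Since $\mcO$ was an arbitrary orbit, every $w$-orbit has even cardinality.

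I do not expect any real obstacle here, as the statement is a direct corollary once Theorem~\ref{thm:arccounthomomesy} is in hand: the only ``content'' is noticing that integrality of $\alpha$ together with a non-integer homomesy mean constrains the orbit size modulo the denominator. If one wanted the proof to be self-contained rather than citing the theorem, the work would instead be in establishing the $\frac{n-1}{2}$-mesy of $\alpha$, but the excerpt explicitly permits us to assume Theorem~\ref{thm:arccounthomomesy}. One minor point worth stating explicitly in the writeup, for clarity, is that the hypothesis ``$w$ contains every $\tau_{i,i+1}$'' is needed only insofar as it is the hypothesis of Theorem~\ref{thm:arccounthomomesy}; no additional use of it is made. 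Similarly, the analogous statement in terms of $\beta$ follows identically using $\alpha(P)+\beta(P)=n$ and the $\frac{n+1}{2}$-mesy of $\beta$, but since $\beta=n-\alpha$ this gives no new information about orbit sizes.
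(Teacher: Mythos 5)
Your argument is correct and is essentially identical to the paper's proof: both deduce evenness of the orbit size from the integrality of the arc count together with the fact that the homomesy mean $\frac{n-1}{2}$ is a non-integer when $n$ is even. Your writeup just spells out the divisibility step ($2\Sigma=(n-1)N$ with $n-1$ odd) slightly more explicitly.
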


\begin{proof}
The arc count of any noncrossing partition is an integer.  Therefore, the only way for the average arc count across an orbit to be $\frac{n-1}{2}$, which is not an integer for even $n$, is if the orbit contains an even number of noncrossing partitions.
\end{proof}

This gives an example as to how homomesy can be used to prove statements that neither mention homomesy nor the statistic that is homomesic.  There is no other known way to prove Corollary \ref{cor:eveneven}, as there does not appear to be a way to characterize the orbit sizes in general.  For example, in $\NC(6)$, the sizes of the orbits of the Coxeter element $$w=\tau_{4,6}\tau_{3,6}\tau_{2,4}\tau_{1,5}\tau_{2,5}\tau_{1,3}\tau_{3,4}\tau_{1,2}\tau_{1,6}\tau_{2,6}\tau_{3,5}\tau_{2,3}\tau_{1,4}\tau_{5,6}\tau_{4,5}$$ are 4, 22, 46, and 60. There is no noticeable pattern aside from the fact that they are all even.

Figure \ref{example} displays an example of Corollary \ref{cor:eveneven}, as each orbit in the example contains either two or six noncrossing partitions.
\section{Kreweras complementation and the Simion-Ullman involution} \label{sec:kreweras}

The action of the Coxeter element in Proposition~\ref{prop:columns-and-rows} on $\NC(n)$ is actually the inverse of a well-studied action called \emph{Kreweras complementation} introduced in~\cite{kreweras1972sur} and further investigated in~\cite{heitsch2015counting}, defined as follows:

\begin{defn} 
Let $\pi\in \NC(n)$. Draw $\pi$ on a circle, as shown on the right side of Figure~\ref{ncgraphics}, and insert a new point $i'$ immediately clockwise from $i$ along the circle. The Kreweras complement $\kappa(\pi)$ is the coarsest noncrossing partition of the primed numbers in the complement of $\pi$. 
\end{defn}

See Figure~\ref{fig:kreweras} for a pictorial example of Kreweras complementation.

We will now show that Kreweras complementation and the action described in Proposition~\ref{prop:columns-and-rows} are closely related:
\begin{thm}\label{kreweras same as nice}
Let $\pi\in \NC(n)$, and denote by $\kappa(\pi)'$ the partition obtained from $\kappa(\pi)$ by replacing $i$ with $i+1$ for each $1 \leq i \leq n$, such that $n$ is replaced by 1. Then 
\[\kappa(\pi)'=\tau_{n-1,n}\tau_{n-2,n}\tau_{n-2,n-1}\cdots\tau_{2,3}\tau_{1,n}\cdots\tau_{1,5}\tau_{1,4}\tau_{1,3}\tau_{1,2}(\pi).\]
\end{thm}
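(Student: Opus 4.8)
The plan is to show that both sides implement the same combinatorial operation on noncrossing partitions by tracking what each of the toggles $\tau_{1,2},\tau_{1,3},\dots,\tau_{1,n},\tau_{2,3},\dots,\tau_{n-1,n}$ does, applied in the specified order, to the linear representation of a fixed $\pi$. The operation on the right is exactly the Coxeter element of Proposition~\ref{prop:columns-and-rows} (the one orienting all edges east, south, and southeast), so it suffices to give a direct description of how that Coxeter element transforms $\pi$, and then match it against the known description of Kreweras complementation under the relabeling $i \mapsto i+1$ (cyclically). First I would recall the standard fact that $\kappa$ is most cleanly described by interleaving primed points $1',\dots,n'$ with $1,\dots,n$ on the circle so that the $2n$ points read $1,1',2,2',\dots,n,n'$ clockwise, and that $\kappa(\pi)$ is the unique noncrossing partition on $\{1',\dots,n'\}$ whose blocks fill in the ``gaps'' left by $\pi$; equivalently, $j'$ and $k'$ (with $j<k$) are joined by an arc of $\kappa(\pi)$ precisely when, after cutting the circle, the interval strictly between them contains a full block of $\pi$ in a suitable nested sense. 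The relabeling $i \mapsto i+1 \pmod n$ turns $\kappa(\pi)'$ into a partition on $\{1,\dots,n\}$ again, and this is what we must reproduce by toggling.

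The key step is an inductive analysis of the toggle sequence. I would process the toggles in the order they act (rightmost first): $\tau_{1,2}$, then $\tau_{1,3}$, then $\tau_{1,4}$, and so on up $\tau_{1,n}$, then $\tau_{2,3},\tau_{2,4},\dots$, i.e.\ column by column (equivalently row by row, by Proposition~\ref{prop:columns-and-rows}). The main claim to establish is a loop invariant: after applying all toggles $\tau_{i,j}$ with $i \le p$, the current arc diagram already agrees with (the relabeling of) $\kappa(\pi)$ on all arcs with left endpoint $\le p$, and still records enough of $\pi$ on the ``unprocessed'' part to finish the job. Concretely, I expect that when we reach $\tau_{i,j}$, the arc $(i,j)$ gets added exactly when $i$ and $j$ are cyclically consecutive points in the complement-block structure of $\pi$ relative to the arcs processed so far, and gets removed exactly when $(i,j)$ was an arc of the original $\pi$ that has now been ``used up.'' Tracking this requires the commutation/non-commutation data from Proposition~\ref{prop:commuting-toggles} to know which later toggles can still interfere: crucially, a toggle $\tau_{i,j}$ with $i\le p$ commutes with all toggles $\tau_{k,\ell}$ having $k > j$ or the whole arc disjoint/nested appropriately, so the ``already-correct'' arcs are not disturbed. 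The shift by one in the labeling appears because toggling the arcs with left endpoint $1$ first (the first column) has the effect of rotating the role of point $1$, which is exactly the $n \mapsto 1$ wraparound in the statement.

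The hard part will be making the loop invariant precise enough that the induction actually goes through — in particular, pinning down exactly the condition under which $\tau_{i,j}$ fires (adds its arc) in terms of the blocks of $\pi$, and checking that this condition matches, arc for arc, the combinatorial description of $\kappa$ with the cyclic relabel. An alternative, possibly cleaner route that I would keep in reserve: verify the identity on a generating set or by a bijective/recursive argument on $n$, peeling off the point $n$ (or $1$), since both $\kappa$ and the toggle word have clean restrictions when a singleton or an outermost block is removed; and as a sanity check, confirm the identity directly for small $n$ (say $n \le 3$) and confirm consistency with $|\pi| + |\kappa(\pi)| = n+1$ via the relation $\alpha(P) + \beta(P) = n$. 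But the backbone of the proof is the explicit toggle-by-toggle tracking, and that bookkeeping — especially getting the wraparound bookkeeping right — is where the real work lies.
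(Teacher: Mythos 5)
Your proposal is a plan rather than a proof: the entire content of the theorem is concentrated in the ``loop invariant'' you state but do not establish. You write that you \emph{expect} $\tau_{i,j}$ to fire ``exactly when $i$ and $j$ are cyclically consecutive points in the complement-block structure of $\pi$ relative to the arcs processed so far,'' and that an arc is removed when it has been ``used up,'' but you never pin down these conditions, never prove that the already-processed arcs cannot be disturbed by later toggles (commutativity alone does not give this, since the later toggles are applied to a partition that has already been modified, and non-commuting toggles such as $\tau_{p,j}$ and $\tau_{p+1,j}$ do occur in consecutive rows), and never verify that the resulting arc set coincides with the relabeled Kreweras complement. You explicitly acknowledge that this bookkeeping ``is where the real work lies'' --- which is precisely the gap: without it there is no argument to check.

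For comparison, the paper does not take your toggle-by-toggle route but essentially the one you ``keep in reserve'': an induction on $n$ that peels off the block containing the point $1$. It splits into two cases --- either $\{1\}$ is a singleton block, or $1$ is joined by an arc to some $j$ --- and in the second case decomposes $\kappa(\pi)'$ as $\kappa(\pi[2:j-1]\cup\{\{j\}\})'\cup\kappa(\pi(\{1\}\cup[j+1:n]))'$, then tracks the word row by row just far enough to see that the toggles respect this decomposition (e.g.\ that $\tau_{1,j}\cdots\tau_{1,2}$ removes the arc $(1,j)$ and that for $2\le y\le j-1$ the toggles $\tau_{y,n}\cdots\tau_{y,j+1}$ act trivially), so that the inductive hypothesis applies to each piece. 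That recursive decomposition is what replaces your unproven invariant; if you want to complete your write-up, I would recommend switching to that structure, since formulating a correct global invariant for the row-by-row sweep is substantially harder than controlling the two sub-partitions created by the first arc at $1$.
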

\begin{proof}
First, note that in order to obtain $\kappa(\pi)'$, we draw $\pi$ on a circle, and insert a new point $i'$ immediately \textbf{counterclockwise} from $i$ along the circle. We then take the coarsest noncrossing partition of the primed numbers in the complement of $\pi$, and this is $\kappa(\pi)'$.

For a $A \subset [n]$, we denote by $\pi(A)$ the restriction of $\pi$ to $A$. That is, $B$ is a block of $\pi(A)$ if and only if $B=A \cap C$ for some block $C$ of $\pi$. For example, if $\pi$ is as in Figure~\ref{nclinear}, then $\pi(\{1,5,7,8,9\})=\{\{1,5\},\{7\},\{8,9\}\}$. In this proof, we denote by $\pi[a:b]$ \mbox{(resp.\ $ \pi({c} \cup [a:b]) $)} the restriction of $\pi$ to the set $\{a,a+1,\ldots,b\}$ (resp.\ $ \{c,a,a+1,\ldots,b\} $) for $c < a,b$. If $a>b$ then we set $\pi[a:b]=\varnothing$ and $ \pi({c} \cup [a:b])=\{\{c\}\}$.

We proceed by induction on $n$. The base cases $n=1,2$ are trivial, so we assume that the claim holds for $n \leq m$, and prove it for $n=m+1$. Let $\pi\in \NC(m+1)$. There are two possible cases: either $\{\{1\}\} \in \pi$ or there exists some $2 \leq j \leq m+1$ such that $(1,j)$ is an arc of $\pi$. In the first case, $\kappa(\pi)'$ is obtained from $\kappa(\pi[2:m+1])'$ by adding the element 1 to the set of $\kappa(\pi[2:m+1])'$ that contains 2 (this is equivalent to adding the arc $(1,2)$ to $\kappa(\pi[2:m+1])'$). On the other hand, the action of $\tau_{1,m+1}\ldots\tau_{1,3}\tau_{1,2}$ on $\pi$ adds the arc $(1,2)$. By the inductive hypothesis and the fact that $\tau_{u,v}$ is not influenced by the arc $(1,2)$ if $2 \leq u,v$, we get
\[\tau_{m,m+1}\tau_{m-1,m+1}\ldots\tau_{2,4}\tau_{2,3}(\pi \cup (1,2))=\kappa(\pi[2:m+1])'\cup (1,2).\]
In conclusion, 
\[\tau_{n-1,n}\tau_{n-2,n}\tau_{n-2,n-1}\cdots\tau_{2,3}\tau_{1,n}\cdots\tau_{1,5}\tau_{1,4}\tau_{1,3}\tau_{1,2}(\pi)=\kappa(\pi)',\] and we are done with the first case.

Consider now the second case. By inspection, 
\[\kappa(\pi)'=\kappa(\pi[2:j-1] \cup \{\{j\}\})' \cup \kappa(\pi({1} \cup [j+1:n]))'.\]
Let us examine the action of $\tau:=\tau_{n-1,n}\tau_{n-2,n}\tau_{n-2,n-1}\cdots\tau_{1,3}\tau_{1,2}$ on $\pi$. First, we would like to show that $\tau(\pi)([2:j])=\kappa(\pi[2:j-1] \cup \{\{j\}\})'$.
Applying $\tau_{1,j}\cdots\tau_{1,3}\tau_{1,2}$ on $\pi$ removes the arc $(1,j)$. Then, the action of $\tau_{1,n}\cdots\tau_{1,j+2}\tau_{1,j+1}$ has no influence on the arcs between the numbers in the set $\{2,3,\ldots,j\}$, and no arc of the form $(u,j)$ for $u<j$ is present. We now apply 
$\tau_{2,j}\cdots\tau_{2,4}\tau_{2,3}$. Note that after this stage, either 2 is connected by an arc to some number $2<b<j$, or 2 is connected by an arc to $j$. If the latter case happens, then by the inductive hypothesis on the partition $\pi[2:j-1] \cup \{\{j\}\}$, we have $\tau(\pi)([2:j])=\kappa(\pi[2:j-1] \cup \{\{j\}\})'$. If the former case happens, let us continue by applying $\tau_{2,n}\cdots\tau_{2,j+2}\tau_{2,j+1}$. This action does nothing, since either $(2,b)$ or $(2,j)$ is present. We now apply $\tau_{3,j}\cdots\tau_{3,5}\tau_{3,4}$, and as before after this stage, either 3 is connected by an arc to some $3<u<j$, or 3 is connected by an arc to $j$. In the latter case, the inductive hypothesis on the partition $\pi[2:j-1] \cup \{\{j\}\}$ leads us again to $\tau(\pi)([2:j])=\kappa(\pi[2:j-1] \cup \{\{j\}\})'$. In the former case we continue with $\tau_{3,n}\cdots\tau_{3,j+2}\tau_{3,j+1}$ (which does nothing) and then with $\tau_{4,j}\cdots\tau_{4,6}\tau_{4,5}$ and so on. The same reasoning as before implies that for any $2 \leq y \leq j-1$, the action $\tau_{y,n}\cdots\tau_{y,j+2}\tau_{y,j+1}$ does nothing, and therefore by the inductive hypothesis $\tau(\pi)([2:j])=\kappa(\pi[2:j-1] \cup \{\{j\}\})'$. Moreover, this reasoning also implies that no arc of the form $(v,w)$ for $v \in \{2,3,\ldots,j-1\}, w \in \{j+1,\ldots,n\}$ is present in $\tau(\pi)$ (and also in any intermediate stage of the action of $\tau$).
Finally, we would like to use the inductive hypothesis on $\pi({1} \cup [j+1:n])$ in order to show that $\tau(\pi)([1,j+1:n])=\kappa(\pi({1} \cup [j+1:n]))'$. In view of our observations earlier, the only thing that we need to show is that $\tau_{j,n}\cdots\tau_{j,j+2}\tau_{j,j+1}\cdots\tau_{2,n}\cdots\tau_{2,4}\tau_{2,3}\tau_{1,n}\cdots\tau_{1,3}\tau_{1,2}(\pi)$ has no arc of the form $(j,t)$ for $j<t$, and then we can apply the inductive hypothesis. Assume in contradiction that the arc $(j,t)$ is present in $\tau_{j,n}\cdots\tau_{j,j+2}\tau_{j,j+1}\cdots\tau_{2,n}\cdots\tau_{2,4}\tau_{2,3}\tau_{1,n}\cdots\tau_{1,3}\tau_{1,2}(\pi)$. This implies that we could add the arc $(1,t)$ in an earlier stage of the process, as part of the action of $\tau_{1,t}\cdots\tau_{1,j}\cdots\tau_{1,3}\tau_{1,2}$ on $\pi$, a contradiction. Therefore, $\tau(\pi)([1,j+1:n])=\kappa(\pi({1} \cup [j+1:n]))'$, and hence \[\kappa(\pi)'=\kappa(\pi[2:j-1] \cup \{\{j\}\})' \cup \kappa(\pi({1} \cup [j+1:n]))'.\]
\end{proof}


\begin{figure}
\begin{center}
\begin{tikzpicture}[scale=0.63]
\begin{scope}[shift={(0,0)}]
\draw[dashed] (0,0) circle (3cm);
\foreach \x in {1,...,8}
{
\filldraw (90-45*\x:3) circle (.1cm);
\path node at (90-45*\x:3.3) {\x};
\filldraw[color=blue] (67.5-45*\x:3) circle (.1cm);
\path node at (67.5-45*\x:3.3) {\textcolor{blue}{\x$'$}};
}
\draw[pattern = north west lines] (0:3) -- (-90:3) -- (-135:3) -- cycle;
\draw (180:3) -- (90:3);
\draw[color=blue] (157.5:3) -- (112.5:3);
\draw[color=blue,pattern = north west lines,pattern color=blue] (45-22.5:3) -- (180+22.5:3) -- (90-22.5:3) -- cycle;
\draw[color=blue] (-22.5:3) -- (-45-22.5:3);
\end{scope}
\begin{scope}[shift={(9,0)}]
\draw[dashed] (0,0) circle (3cm);
\foreach \x in {1,...,8}
{
\filldraw (90-45*\x:3) circle (.1cm);
\path node at (90-45*\x:3.3) {\x};
\filldraw[color=blue] (67.5-45*\x:3) circle (.1cm);
\path node at (67.5-45*\x:3.3) {\textcolor{blue}{\x$'$}};
}
\draw[color=blue, pattern = north west lines, pattern color=blue] (0+22.5:3) -- (-90+22.5:3) -- (-135+22.5:3) -- cycle;
\draw[color=blue] (180+22.5:3) -- (90+22.5:3);
\draw[color=black] (157.5+22.5:3) -- (112.5+22.5:3);
\draw[color=black,pattern = north west lines,pattern color=black] (45-22.5+22.5:3) -- (180+22.5+22.5:3) -- (90-22.5+22.5:3) -- cycle;
\draw[color=black] (-22.5+22.5:3) -- (-45-22.5+22.5:3);
\end{scope}
\begin{scope}[shift={(18,0)}]
\draw[dashed] (0,0) circle (3cm);
\foreach \x in {1,...,8}
{
\filldraw (90-45*\x:3) circle (.1cm);
\path node at (90-45*\x:3.3) {\x};
\filldraw[color=blue] (67.5-45*\x:3) circle (.1cm);
\path node at (67.5-45*\x:3.3) {\textcolor{blue}{\x$'$}};
}
\draw[color=black, pattern = north west lines, pattern color=black] (0+45:3) -- (-90+45:3) -- (-135+45:3) -- cycle;
\draw[color=black] (180+45:3) -- (90+45:3);
\draw[color=blue] (157.5+45:3) -- (112.5+45:3);
\draw[color=blue,pattern = north west lines,pattern color=blue] (45-22.5+45:3) -- (180+22.5+45:3) -- (90-22.5+45:3) -- cycle;
\draw[color=blue] (-22.5+45:3) -- (-45-22.5+45:3);
\end{scope}
\end{tikzpicture}
\end{center}
\caption{Applying the Kreweras complement $\kappa$ to the noncrossing partition $\pi=\{(2,4),(4,5),(6,8)\}$ shown at left yields $\kappa(\pi)=\textcolor{blue}{\{(1,5),(2,3),(5,8),(6,7)\}}$, which is the \textcolor{blue}{blue noncrossing partition} on the left, and the black one in the middle. Applying $\kappa$ twice yields $\kappa^2(\pi)=\{(1,3),(3,4),(5,7)\}$, shown at right. The convex hulls of $\kappa^2(\pi)$ and $\pi$ differ by a counterclockwise rotation of $2\pi/8$ radians.}
\label{fig:kreweras}
\end{figure}
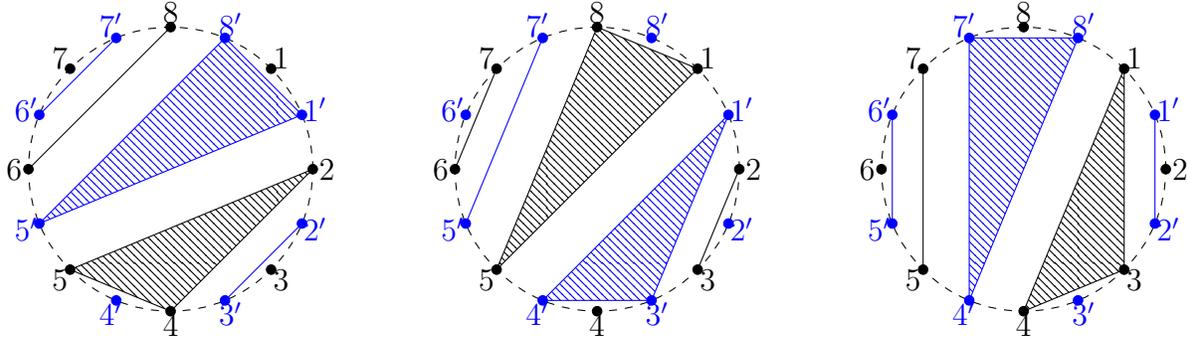

\begin{rem}
Using the notation of Theorem \ref{kreweras same as nice}, $\kappa(\pi)'=\kappa^{-1}(\pi)$. In other words, $$\kappa=\tau_{1,2}\tau_{1,3}\tau_{1,4}\cdots\tau_{1,n}\tau_{2,3}\cdots\tau_{n-2,n-1}\tau_{n-2,n}\tau_{n-1,n}.$$
\end{rem}

\begin{lem}\label{lem:kappa^2}
Let $\pi$ be a noncrossing partition. Applying the Kreweras complement twice to $\pi$ rotates $\pi$ counterclockwise by $2\pi/n$, i.e. $\kappa\circ\kappa$ is a rotation by $2\pi/n$ in the counterclockwise direction in the circular representation of $\NC(n)$, so the order of $\kappa$ divides $2n$. 
\end{lem}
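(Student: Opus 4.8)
The plan is to work entirely in the circular representation and track what the Kreweras complement does to the $2n$ points $1, 1', 2, 2', \dots, n, n'$ arranged clockwise on the circle. The key observation is that Kreweras complementation admits a symmetric description: starting from $\pi$ on the unprimed points, $\kappa(\pi)$ is the coarsest noncrossing partition on the primed points in the complement of $\pi$; but one can show that, symmetrically, $\pi$ is recovered as the coarsest noncrossing partition on the unprimed points in the complement of $\kappa(\pi)$, provided one reads the primed points as sitting immediately \emph{counterclockwise} from the corresponding unprimed point when inverting. In other words, the operation ``take the coarsest noncrossing partition on the complementary set of points'' is an involution on the combined $2n$-point circle, and $\kappa$ is this involution composed with the relabeling that identifies the primed circle with the unprimed circle.

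First I would make this precise: define $K$ to be the map on noncrossing partitions of the $2n$ equally-spaced points that sends a noncrossing partition supported on the ``odd'' positions to the coarsest noncrossing partition supported on the ``even'' positions lying in its complement (and vice versa). One checks $K$ is a well-defined involution — this is the standard fact that the Kreweras complement of the Kreweras complement is a rotation, repackaged. Then $\kappa = \rho_- \circ K$ where $\rho_-$ is the rotation of the $2n$-point circle that carries each even position to the odd position immediately counterclockwise (equivalently, the relabeling $i' \mapsto i$). Since $K$ is supported-set-swapping and $\rho_-$ rotates by one step (an angle of $2\pi/(2n) = \pi/n$), we get $\kappa^2 = \rho_- K \rho_- K = \rho_- (K \rho_- K)$. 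Now $K\rho_- K$ is again a rotation by one step of the $2n$-point circle (conjugating a rotation by an involution of the circle that is itself ``rotation-compatible'' gives a rotation), and in fact $K \rho_- K = \rho_-$ as well, because both $K$ and $\rho_-$ respect the bipartition structure in compatible ways; hence $\kappa^2 = \rho_-^2$, which is rotation by $2\pi/n$ counterclockwise on the original $n$-point circle. From $\kappa^2 = (\text{rotation by } 2\pi/n)$ it follows immediately that $\kappa^{2n} = \mathrm{id}$, so the order of $\kappa$ divides $2n$.

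Alternatively — and this is probably the cleaner route to actually write down — I would avoid the abstract involution $K$ and argue directly: Theorem~\ref{kreweras same as nice} already identifies $\kappa^{-1}$ (up to the shift $i \mapsto i+1$) with an explicit product of toggles, and the Remark records $\kappa$ itself as the reverse product. One can then observe that the shift $i \mapsto i+1$ is exactly rotation by $2\pi/n$ in the circular representation, and chase through the definition of $\kappa$ directly on the circular picture to see that applying $\kappa$ twice returns the original convex-hull configuration rotated by one unprimed step. Concretely: inserting primed points clockwise, forming the complement, then inserting doubly-primed points clockwise and forming the complement again, lands each block of $\pi$ on a rotated copy of itself, because after two complementations every original point $i$ has been ``passed over'' by exactly one inserted point on each side. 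Making that counting argument rigorous is the main obstacle — one must be careful that the ``coarsest noncrossing partition of the complement'' genuinely reconstructs $\pi$ shifted, rather than some coarsening or refinement of it, and this requires the observation that $\pi$, being noncrossing, is already the coarsest noncrossing partition with the given ``gap structure'' between consecutive elements of its blocks. Once $\kappa^2$ is identified as a rotation by $2\pi/n$, the divisibility statement $|\kappa| \mid 2n$ is immediate since rotation by $2\pi/n$ has order $n$.

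The main difficulty, then, is the bookkeeping in identifying $\kappa\circ\kappa$ with the one-step rotation on the $n$-point circle: one has to set up the $2n$-point (or $3n$-point, if inserting twice) circular picture carefully and verify that the two successive ``complement and coarsen'' operations compose to a rigid rotation rather than merely a permutation of blocks. I expect everything else — the deduction that the order divides $2n$, and the consistency with Theorem~\ref{kreweras same as nice} — to be routine.
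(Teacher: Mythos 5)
The paper itself offers no proof of this lemma: it is stated as a classical fact about Kreweras complementation and merely illustrated by Figure~\ref{fig:kreweras}. So there is nothing to compare your argument against, and I can only assess it on its own terms. Your first approach is the standard one, and its skeleton is sound: writing $\kappa=\rho_-\circ K$, with $K$ the ``complement and coarsen'' operation on the $2n$-point circle and $\rho_-$ the one-step counterclockwise rotation, the identity $\kappa^2=\rho_-^2$ follows from the two facts $K^2=\mathrm{id}$ and $K\rho_-=\rho_-K$. The commutation is genuinely easy: the definition of $K$ is symmetric under exchanging the two parity classes of points, hence invariant under conjugation by any rotation of the $2n$-point circle, including the odd-step rotation $\rho_-$. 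The direction and magnitude of $\rho_-^2$ (counterclockwise by $2\pi/n$, consistent with the figure) and the deduction $\kappa^{2n}=\mathrm{id}$ are handled correctly.

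The genuine gap is $K^2=\mathrm{id}$. As you have set things up, ``$K$ is an involution'' together with ``$K$ commutes with $\rho_-$'' \emph{is} the statement that $\kappa^2$ is a rotation, so justifying the former by calling it ``the standard fact that the Kreweras complement of the Kreweras complement is a rotation, repackaged'' is circular: you would be assuming the lemma in order to prove it. To close this, prove directly that the complementation relation on the $2n$-point circle is symmetric: if $\tau$ is the coarsest noncrossing partition of the even points with $\sigma\cup\tau$ noncrossing, then $\sigma$ is the coarsest noncrossing partition of the odd points with $\sigma\cup\tau$ noncrossing. One clean route uses the block-count identity $|\sigma|+|K(\sigma)|=n+1$ (already quoted in the paper's introduction for $\kappa$) together with the observation that the even-supported partitions compatible with $\sigma$ are exactly the refinements of $K(\sigma)$: the coarsest odd-supported partition $\sigma'$ compatible with $\tau=K(\sigma)$ then satisfies $\sigma\le\sigma'$ and $|\sigma'|=n+1-|\tau|=|\sigma|$, forcing $\sigma'=\sigma$. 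With that in place your first argument is complete; your second sketch (via Theorem~\ref{kreweras same as nice} or via repeated insertion of primed points) runs into exactly the same rigor issue, which you correctly identify as the main obstacle, and does not circumvent it.
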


The Simion-Ullman involution is $\lambda=\eta \circ \kappa$, where $\eta$ is the relabeling map
that replaces $i$ by $n-i$ for $1 \leq i < n$
and leaves $n$ fixed.


\section{Proof of Theorem~\ref{thm:arccounthomomesy}} \label{sec:mainthmproof}

When searching for homomesies, it helps to define simple indicator function statistics, and then determine which linear combinations of these are homomesic.

\begin{defn} 
We denote the indicator function of the arc $(i,j)$ by $\chi_{i,j}:\NC(n)\to\{0,1\}$ defined as 
\[
\chi_{i,j}(P) = \begin{cases} 1 & (i,j)\in P, \\ 0 & (i,j)\not\in P. \end{cases} 
\] 
\end{defn}

If two Coxeter elements $w,w'\in W_n$ are conjugate, then it follows immediately that there is a bijection of $W_n$ that sends $w$-orbits to $w'$-orbits.


\begin{lem} 
Given an admissible conjugation $w'=a^{-1}wa$ of a Coxeter element, there is a natural bijection between $w$-orbits of $\NC(n)$ and $w'$-orbits of $\NC(n)$, given by $\mcO\mapsto a^{-1}\mcO$. 
\[
\xymatrix{\mcO \ar[r]^{\tau_w}\ar[d] & \mcO\ar[d] \\ \!\!a^{-1}\mcO\ar[r]^{\tau_{w'}} & a^{-1}\mcO}
\]
The bijection preserves the size of orbits. 
\end{lem}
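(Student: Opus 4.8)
The statement asserts that an admissible conjugation $w' = a^{-1}wa$ of a Coxeter element induces an orbit-preserving bijection $\mcO \mapsto a^{-1}\mcO$ between $w$-orbits and $w'$-orbits, respecting orbit sizes. My plan is to prove this purely group-theoretically, since the key fact is that $w$ and $w'$ are conjugate via the \emph{specific} element $a$ arising from the $c$-admissible sequence. First I would record the elementary observation: whenever $g, h \in W_n$ satisfy $h = a^{-1}ga$ for some $a \in W_n$, the map $\phi_a \colon P \mapsto a^{-1}(P)$ on $\NC(n)$ conjugates the $g$-action into the $h$-action, i.e.\ $\phi_a \circ g = h \circ \phi_a$ as permutations of $\NC(n)$. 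This is immediate from $h \circ \phi_a (P) = a^{-1} g a a^{-1}(P) = a^{-1}g(P) = \phi_a(g(P))$. Since $\phi_a$ is a bijection of $\NC(n)$ (its inverse is $\phi_{a^{-1}}$, because $\phi_a \circ \phi_{a^{-1}} = \mathrm{id}$ as each is an honest permutation coming from the group action), it carries $\langle g\rangle$-orbits bijectively to $\langle h\rangle$-orbits.

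Next I would apply this with $g = w$, $h = w'$, and $a$ the group element $s_{x_1}\cdots s_{x_m}$ determined by the $c$-admissible sequence realizing the admissible conjugation. By the definition of admissible conjugation (from Section~\ref{sec:coxeter}), we have exactly $w' = a^{-1}wa$, so the general observation applies verbatim: $\mcO \mapsto a^{-1}\mcO := \{a^{-1}(P) : P \in \mcO\}$ is a well-defined bijection from the set of $w$-orbits to the set of $w'$-orbits, and it is visibly compatible with the dynamics in the sense displayed in the commuting square in the statement (with $\tau_w$ and $\tau_{w'}$ denoting the induced maps on orbits). Finally, the size-preservation claim: if $\mcO$ is a $w$-orbit of size $k$, say $\mcO = \{P, w(P), \dots, w^{k-1}(P)\}$, then $a^{-1}\mcO = \{a^{-1}(P), a^{-1}w(P), \dots\}$, and by the intertwining relation $a^{-1}w^{j}(P) = (w')^{j}(a^{-1}(P))$, so $a^{-1}\mcO$ is precisely the $w'$-orbit of $a^{-1}(P)$ and has the same cardinality $k$ (it cannot be smaller since $\phi_a$ is injective, nor larger since $\phi_{a^{-1}}$ sends it back into $\mcO$).

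\textbf{Main obstacle.} There is essentially no analytic or combinatorial obstacle here — the content is entirely in having set up the vocabulary of admissible conjugation correctly, so that $w' = a^{-1}wa$ holds \emph{as group elements} in $W_n$ and hence \emph{as permutations} of $\NC(n)$. The one point that deserves a sentence of care is that the source-to-sink operations on $\omega(c)$ that define a $c$-admissible sequence a priori guarantee that $a^{-1}wa$ is again a partial Coxeter element realizing a torically equivalent orientation; what we actually need is just the bare conjugacy relation together with the fact that $a$ lies in $W_n$ so that $\phi_a$ is an honest bijection of the finite set $\NC(n)$. Once that is noted, the lemma is a formal consequence of the principle that conjugate group elements have isomorphic orbit structures under any action, made concrete by the explicit conjugator $a$; I would present it in roughly the three short steps above without belaboring the routine verifications.
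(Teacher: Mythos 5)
Your proposal is correct, and it matches the paper's (implicit) reasoning: the paper states this lemma without proof, treating it as an immediate consequence of the fact that conjugation by $a$ intertwines the $w$- and $w'$-actions on $\NC(n)$, which is exactly the computation $w'\circ\phi_a=\phi_a\circ w$ you carry out. Your added remark that only the bare relation $w'=a^{-1}wa$ (and not admissibility) is needed here is accurate; admissibility only becomes essential in the subsequent lemma on preserving the arc indicator sums.
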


There is no reason to expect conjugations of Coxeter elements to preserve statistics or homomesy, because the contents of the orbits are generally scattered. However, in certain cases it surprisingly does.  The following lemma describes how an admissible conjugation preserves any homomesic statistic which is a linear combination of the arc indicator functions.


\begin{lem} 
Let $w$ be a partial Coxeter element, and let $w'=a^{-1}wa$ be an admissible conjugate of $w$. Let $\mcO$ be a $w$-orbit in $\NC(n)$, and let $\mcO'=a^{-1}\mcO$ be the corresponding orbit of $w'$. Then 
 \[
\sum_{P\in\mcO} \chi_{i,j}(P)=\sum_{P'\in\mcO'}\chi_{i,j}(P').
\] 
\end{lem}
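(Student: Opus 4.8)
The statement says that admissible conjugation by a partial-Coxeter-element $a$ preserves the number of times each arc $(i,j)$ appears in an orbit. The natural strategy is to reduce to the case where $a$ is a single generator, since an arbitrary admissible sequence is a composition of source-to-sink moves, each of which conjugates by one generator, and the property of being preserved is transitive along such a chain. So it suffices to prove: if $w$ is a partial Coxeter element, $s=\tau_{k,\ell}$ is an initial generator of $w$ (a source of $\omega(w)$), $w'=sws$, and $\mcO$ is a $w$-orbit with $\mcO'=s\mcO=s^{-1}\mcO$ the corresponding $w'$-orbit, then $\sum_{P\in\mcO}\chi_{i,j}(P)=\sum_{P'\in\mcO'}\chi_{i,j}(P')$ for every arc $(i,j)$.

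First I would fix notation: list one period of the $w$-orbit as $P_0,P_1=w(P_0),\dots,P_{d-1}=w^{d-1}(P_0)$, with $w(P_{d-1})=P_0$. Since $s$ is initial in $w$, write a reduced expression $w=\tau_{a_m}\cdots\tau_{a_2}\, s$ with $s$ applied first, so $w=u s$ where $u=\tau_{a_m}\cdots\tau_{a_2}$. Then $w'=sws=s(us)s=su$, i.e.\ $w'=su$ just moves $s$ from the front to the back. The corresponding $w'$-orbit is $\mcO'=\{s(P_0),s(P_1),\dots,s(P_{d-1})\}$, and I would check directly that $w'$ cycles through these: $w'(s P_t)=su(sP_t)=s(usP_t)=s(wP_t)=s P_{t+1}$. (One must verify $s\mcO$ is genuinely a single $w'$-orbit of the same size $d$; this follows because $s$ is a bijection conjugating the $w$-action to the $w'$-action.) Now the key computation: I want $\sum_{t} \chi_{i,j}(P_t)=\sum_t \chi_{i,j}(sP_t)$, i.e.\ $\sum_t[\chi_{i,j}(P_t)-\chi_{i,j}(sP_t)]=0$. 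If $(i,j)\ne(k,\ell)$, then applying $s=\tau_{k,\ell}$ either fixes a partition or adds/removes only the arc $(k,\ell)$, so $\chi_{i,j}(sP)=\chi_{i,j}(P)$ termwise — wait, that's not quite right, because $\tau_{k,\ell}$ can fail to add $(k,\ell)$ precisely because of some other arc, but it never changes the presence of any arc other than $(k,\ell)$. So for $(i,j)\ne(k,\ell)$ the two sums agree termwise and we are done. The only interesting case is $(i,j)=(k,\ell)=s$ itself.

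For the arc $s=\tau_{k,\ell}$ I would argue with a telescoping/counting identity. Since $s$ is applied first in $w=us$, we have $P_{t+1}=u(sP_t)$, and $u$ is a product of toggles \emph{none of which is $\tau_{k,\ell}$} (each arc appears at most once in a partial Coxeter element). Therefore $u$ never changes whether arc $(k,\ell)$ is present: $\chi_{k,\ell}(u Q)=\chi_{k,\ell}(Q)$ for all $Q$. Hence $\chi_{k,\ell}(P_{t+1})=\chi_{k,\ell}(sP_t)$. Summing over a full period $t=0,\dots,d-1$ (indices of $P$ cyclic mod $d$) gives $\sum_t\chi_{k,\ell}(P_t)=\sum_t\chi_{k,\ell}(P_{t+1})=\sum_t\chi_{k,\ell}(sP_t)$, which is exactly the desired equality $\sum_{P\in\mcO}\chi_{k,\ell}(P)=\sum_{P'\in\mcO'}\chi_{k,\ell}(P')$. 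Finally, assemble: a general admissible conjugation is a finite sequence of such single-generator conjugations, and the equality of orbit-sums of $\chi_{i,j}$ is preserved at each step, hence overall.

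**Main obstacle.** The substantive point — and the only place care is really needed — is the claim that $u$, the product of the remaining toggles in a reduced expression of $w$ after deleting the initial $s$, does not contain $\tau_{k,\ell}$ and so fixes the indicator $\chi_{k,\ell}$; this rests on the fact that $w$ is a \emph{partial} Coxeter element (each arc toggled at most once) together with the earlier lemma that every toggle in $w$ appears in every reduced expression. Everything else is bookkeeping: confirming $s\mcO$ is one $w'$-orbit of equal size, and the cyclic re-indexing in the telescoping sum. The case split on whether $(i,j)$ equals the conjugating arc is what makes the argument clean, so I would foreground that.
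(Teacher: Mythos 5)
Your proposal is correct and follows essentially the same route as the paper: reduce to conjugation by the single initial toggle $s=\tau_{k,\ell}$, observe termwise equality of $\chi_{i,j}$ when $(i,j)\neq(k,\ell)$, and for $(i,j)=(k,\ell)$ use that the remaining toggles in $w$ do not touch the arc $(k,\ell)$, so $\chi_{k,\ell}(sP_t)=\chi_{k,\ell}(P_{t+1})$ and the orbit sums agree after a cyclic re-indexing. The paper's proof is exactly this case analysis, stated as Eq.~\eqref{eq:arcindicator}.
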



\begin{proof}
Let $w=\tau_{i_1,j_1}\circ\cdots\circ\tau_{i_k,j_k}$ and $\mcO=\{P_1,P_2,\ldots,P_m\}$ such that $w(P_i)=P_{i+1}$ for $1\leq i \leq m-1$, and $w(P_m)=P_1$. Then in order to prove the claim, it is enough to show that it holds for $a=\tau_{i_k,j_k}$. By definition of conjugation, we have \[\mcO'=\{\tau_{i_k,j_k}(P_1),\tau_{i_k,j_k}(P_2),\ldots,\tau_{i_k,j_k}(P_m)\}.\] We will now show that the following holds:
\begin{equation}\label{eq:arcindicator}
\chi_{i,j}(\tau_{i_k,j_k}(P_t)) = \begin{cases} \chi_{i,j}(P_t) & (i,j) \neq (i_k,j_k), \\ \chi_{i,j}(P_{t+1}) & (i,j)=(i_k,j_k), \end{cases} 
\end{equation}
where $1 \leq t \leq m$, and if $t=m$ then we view $t+1$ as 1. Note that Eq.~\eqref{eq:arcindicator} implies the lemma for $a=\tau_{i_k,j_k}$. The case $(i,j) \neq (i_k,j_k)$ follows directly from the fact that applying $\tau_{i_k,j_k}$ may only change the status of the arc $(i_k,j_k)$, and the rest of the arcs stay unchanged. Consider now the case $(i,j)=(i_k,j_k)$, and let $1 \leq t \leq m$. In this case, we have $\chi_{i,j}(P_{t+1})=\chi_{i,j}(w(P_{t}))=\chi_{i,j}(\tau_{i_1,j_1}\circ\cdots\circ\tau_{i_k,j_k}(P_{t}))=\chi_{i,j}(\tau_{i_k,j_k}(P_{t}))$

The last equality follows from the fact that $\tau_{i_k,j_k}$ is the first toggle in the process, and thus the existence (or nonexistence) of the arc $(i_k,j_k)$ in $w(P_{t})$ is determined by this first toggle. Therefore $\chi_{i,j}(P_{t+1})=\chi_{i,j}(\tau_{i_k,j_k}(P_{t}))$, and hence Eq.~\eqref{eq:arcindicator} is proven.
\end{proof}


\begin{rem} The indicator function $\chi_{i,j}$ is not necessarily homomesic.
For example, consider the action $\tau_{1,3}\circ\tau_{2,3}\circ\tau_{1,2}$ on $\NC(3)$ which forms two orbits --- $\chi_{1,3}$ is $0$ on one orbit and nonzero on the other.  
\end{rem}


Before proving Theorem~\ref{thm:arccounthomomesy}, we first define some other statistics.

\begin{defn}\label{psi} Given $k\in[n-1]$ and $P\in \NC(n)$, define the statistic $\psi_k:\NC(n)\to\ZZ$ in the following way:
\begin{eqnarray*}\psi_k(P)&=&2\chi_{k,k+1}(P)+\sum\limits_{1\leq i\leq k-1}\chi_{i,k+1}(P)+\sum\limits_{k+2\leq j\leq n}\chi_{k,j}(P)\\&=&\sum\limits_{1\leq i\leq k}\chi_{i,k+1}(P)+\sum\limits_{k+1\leq j\leq n}\chi_{k,j}(P)\end{eqnarray*} where $\chi_{i,j}$ is the indicator function of the arc $(i,j)$.
\label{def:indicator_statistics}
\end{defn}

Due to the restrictions on arcs with a common left or right endpoint, and arcs that cross, any noncrossing partition can only contain at most one arc that is of the form $(i,k+1)$ or $(k,j)$.  Thus, for any $P\in \NC(n)$, $\psi_k(P)\in\{0,1,2\}$.  Also, $\psi_k(P)$ is fully determined by the following three cases.\begin{itemize}
\item $\psi_k(P)=0$ if and only if $P$ does not contain any arcs of the form $(i,k+1)$ or $(k,j)$.
\item $\psi_k(P)=1$ if and only if $P$ contains an arc of the form $(i,k+1)$ or $(k,j)$ that is not the arc $(k,k+1)$.
\item $\psi_k(P)=2$ if and only if $P$ contains the arc $(k,k+1)$.
\end{itemize}

\begin{thm}\label{psi_homomesy}
Given $k\in[n-1]$, let $T$ either be a Coxeter word, or a partial Coxeter word that contains $\tau_{k,k+1}$. Then the statistic $\psi_k$ is 1-mesic on orbits of $T$.
\label{thm:one_mesic}
\end{thm}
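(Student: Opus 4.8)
The plan is to prove that $\psi_k$ is $1$-mesic by a local analysis of how $T$ (read right-to-left) transforms a noncrossing partition near the "critical column and row" through the point $k$. The key observation is that $\psi_k(P)$ depends only on which arc of the special shape $(i,k+1)$ for $i \le k$ or $(k,j)$ for $j \ge k+1$ is present in $P$ (there is at most one), with the arc $(k,k+1)$ counted with weight $2$. So I would track the ``state'' of $P$ at $k$ as one of finitely many values: no special arc (state $0$), or one of the arcs $(i,k+1)$ or $(k,j)$ distinct from $(k,k+1)$ (state $1$), or the arc $(k,k+1)$ itself (state $2$).

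First I would use the admissible-conjugation lemmas proved just above: since $\psi_k$ is a $\ZZ$-linear combination of arc indicator functions $\chi_{i,j}$, and admissible conjugation preserves $\sum_{P \in \mcO}\chi_{i,j}(P)$ for every orbit, it suffices to prove the theorem for one convenient representative in each admissible-conjugacy class of $T$. Using the source-to-sink moves on the acyclic orientation $\omega(T)$, I would cyclically rearrange $T$ so that $\tau_{k,k+1}$ is the \emph{first} toggle applied. (For a genuine Coxeter word every generator is present, so this is immediate; for a partial Coxeter word the hypothesis that $\tau_{k,k+1}$ occurs is exactly what makes this possible, and this is where the ``contains $\tau_{k,k+1}$'' assumption is used.) Once $\tau_{k,k+1}$ is applied first, I claim that after applying the \emph{whole} word $T$ once, the new partition $T(P)$ always satisfies: if $P$ had $\psi_k(P) = 2$ (i.e.\ $(k,k+1) \in P$) then $\tau_{k,k+1}$ removes it first and $\psi_k$ drops; and conversely each orbit, viewed through the sequence of states at $k$, visits state $2$ exactly once per ``cycle'' in a way that forces the average to be $1$.

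The cleaner way to finish, which I would actually carry out, is to exhibit an explicit involution or sign-reversing pairing on each orbit that matches the $\psi_k = 0$ elements with the $\psi_k = 2$ elements, leaving the $\psi_k = 1$ elements fixed; then the orbit average is $(0+2)/2 = 1$ on the paired part and $1$ on the fixed part. Concretely, with $\tau_{k,k+1}$ applied first: whenever $P$ is in the orbit with $(k,k+1) \in P$, then $\tau_{k,k+1}$ is the \emph{only} toggle in $T$ that can act on the point $k$ before the row/column structure is ``reset,'' so $P$ and $\tau_{k,k+1}(P)$ are consecutive in the orbit, one with $\psi_k = 2$ and the other with $\psi_k = 0$; and I would argue that the remaining toggles of $T$ cannot reinstall an arc $(k,k+1)$ until the orbit cycles around. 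So within each orbit the state-$2$ partitions and state-$0$ partitions come in adjacent pairs, and all other partitions (if any) have state $1$. Summing gives exactly $\#\mcO$, hence average $1$.

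The main obstacle is the claim that ``after $\tau_{k,k+1}$ acts first, no subsequent toggle in $T$ can re-create the arc $(k,k+1)$ within the same application of $T$, and state $2$ is reached from state $0$ rather than state $1$.'' This requires understanding how a non-$\tau_{k,k+1}$ toggle $\tau_{i,j}$ can change the presence of a special arc $(i',k+1)$ or $(k,j')$: adding $(k,k+1)$ to a partition requires that neither $k$ nor $k+1$ already be a left/right endpoint of a ``conflicting'' arc, and I would need to check, using the commutation/non-commutation classification (Proposition~\ref{prop:commuting-toggles}) and the structure of $\omega(T)$, that once $\tau_{k,k+1}$ has been applied, any toggle later in the word that \emph{could} add $(k,k+1)$ is blocked because the relevant endpoint is still occupied, or else that such a re-addition happens only at the ``seam'' of the orbit where the pairing wraps around. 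Handling the partial-Coxeter case (where many toggles are absent, so fewer arcs get cleared) is the delicate part, and I expect the bookkeeping of which special arcs survive to be the technical heart of the argument; the Coxeter case should fall out as the ``all toggles present'' specialization.
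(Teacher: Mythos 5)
Your overall strategy is the same as the paper's: reduce $\psi_k$ to a three-valued ``state'' at $k$ (using the fact that at most one arc of the form $(i,k+1)$ or $(k,j)$ can be present), cyclically shift the word so that $\tau_{k,k+1}$ sits at one end, and then show that within each orbit the state-$0$ and state-$2$ partitions are matched up. The paper does this with $\tau_{k,k+1}$ applied \emph{last}; you choose to apply it \emph{first}. Either convention can be made to work, but the case analysis you sketch for your convention is wrong, and the part you correctly identify as ``the technical heart'' is exactly the part you leave unproved.

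Concretely: with $\tau_{k,k+1}$ applied first, your claimed pairing --- a state-$2$ partition immediately followed by a state-$0$ partition --- fails. After $\tau_{k,k+1}$ removes $(k,k+1)$, a \emph{later} toggle $\tau_{i,k+1}$ or $\tau_{k,j}$ in the word is no longer blocked and may insert a special arc, so $T(P)$ can land in state $1$, not state $0$. (Also, $P$ and $\tau_{k,k+1}(P)$ are not ``consecutive in the orbit''; the orbit steps by the whole word $T$, not by a single toggle.) The correct adjacency for your convention runs the other way: since the only arcs conflicting with $(k,k+1)$ are the special arcs themselves, $\psi_k(P)=0$ forces $\tau_{k,k+1}$ to fire and the resulting arc $(k,k+1)$ then blocks every subsequent special toggle, so every state-$0$ partition is immediately \emph{followed} by a state-$2$ partition; and conversely $T(P)$ can contain $(k,k+1)$ only if $\psi_k(P)=0$, so every state-$2$ partition is immediately preceded by a state-$0$ one. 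That gives the bijection $P\mapsto T(P)$ between state-$0$ and state-$2$ elements and hence the $1$-mesy. The paper's choice of putting $\tau_{k,k+1}$ last makes the symmetric analysis cleaner (state $2$ goes directly to state $0$; from states $0$ and $1$ the next non-$1$ state is $2$), but in either case the transition analysis must actually be carried out --- as written, your proposal asserts the wrong transitions and defers the verification.
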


\begin{proof}
To prove that $\psi_k$ is 1-mesic, it is equivalent to prove that in any orbit $\mcO$, \[\#\{P\in\mcO:\psi_k(P)=0\}=\#\{P\in\mcO:\psi_k(P)=2\}.\]

The general strategy is to show that when $\psi_k(P)=0$ and $\ell$ is the smallest positive value such that $\psi_k\left(T^\ell(P)\right)\not=1$, then 
$\psi_k\left(T^\ell(P)\right)=2$ and vice versa.

In other words, we will prove that in any orbit, the number of partitions that do not contain any arcs of the form $(i,k+1)$ or $(k,j)$ is equal to the number of partitions that contain the arc $(k,k+1)$.  Without loss of generality, we assume that in the word $T$, the toggle $\tau_{k,k+1}$ is the toggle that is applied last.  If this is not the case, then we may conjugate $T$ by the toggles that are performed after $\tau_{k,k+1}$, and then the homomesy and orbit sizes will be unchanged.

Let $\{A_1,\ldots,A_m\}$ be the (possibly empty) set of arcs with left endpoint $k$ or right endpoint $k+1$ whose toggles are contained in $T$, excluding $(k,k+1)$.  We will index the $A_i$'s in the order that they are being toggled in $T$.  Note that $T$ may contain other toggles in addition to $\tau_{A_1},\ldots,\tau_{A_m}$ and $\tau_{k,k+1}$.  However, these other arcs do not affect whether or not $(k,k+1)$ can be inserted into a partition, although they may affect whether or not the $A_1,\ldots,A_m$ arcs can be inserted.  The toggles that are significant in this proof are $\tau_{A_1},\ldots,\tau_{A_m}$ and $\tau_{k,k+1}$.

Let $P$ be such that $(k,k+1)\in P$, so $\psi_k(P)=2$.  Then when computing $T(P)$, every toggle $\tau_{A_i}$ will not be able to add the arc $A_i$ because $(k,k+1)$ is in the partition, and then the final toggle $\tau_{k,k+1}$ will remove $(k,k+1)$ from the partition.  Thus, $T(P)$ contains no arcs of the form $(i,k+1)$ or $(k,j)$, so $\psi_k(T(P))=0$.

Let $P$ be such that $\psi_k(P)=0$.  So $P$ contains no arcs of the form $(i,k+1)$ or $(k,j)$.  When computing $T(P)$, the toggle $\tau_{A_1}$ (if $A_1$ exists) will attempt to add the arc $A_1$ to the partition.  It may or not be possible to add that arc, depending on other arcs in the partition.  If $A_1$ cannot be added, then $\tau_{A_2}$ (if $A_2$ exists) will attempt to add the arc $A_2$ to the partition.  Again, that may or may not be possible, and the process continues.  There are two cases that can happen.

Case 1: An arc $A_i$ is added to the partition.  Then the toggles $\tau_{A_{i+1}},\ldots,\tau_{A_m},\tau_{k,k+1}$ will do nothing.  So $T(P)$ contains the arc $A_i$ and thus $\psi_k(T(P))=1$.

Case 2: None of the arcs $A_1,\ldots,A_m$ can be added to the partition when the toggles $\tau_{A_1},\ldots,\tau_{A_m}$ are applied.  Then there are no arcs that interfere with the ability to add the arc $(k,k+1)$, so the final toggle $\tau_{k,k+1}$ adds this arc.  Therefore $\psi_k(T(P))=2$.

Note that if the word $T$ contains no arcs with left endpoint $k$ or right endpoint $k+1$ other than $(k,k+1)$, then $\{A_1,\ldots,A_m\}=\varnothing$, so we go to Case 2 automatically.

Now let $P$ be a partition that contains $A_i$ for some $i$.  When computing $T(P)$, the toggles $\tau_{A_j}$ for $j<i$ do nothing, then $\tau_{A_i}$ removes $A_i$ from the partition.  Then there are two cases for what happens when the toggles $\tau_{A_j}$ for $j>i$ and $\tau_{k,k+1}$ are applied.

Case 1: An arc $A_j$ for some $j>i$ is added to the partition.  Then the toggles $\tau_{A_{j+1}},\ldots,\tau_{A_m},\tau_{k,k+1}$ will do nothing.  So $T(P)$ contains the arc $A_j$ and thus $\psi_k(T(P))=1$.

Case 2: None of the arcs $A_j$ for $j>i$ can be added when those respective toggles are applied.  Then there are no arcs that interfere with the ability to add the arc $(k,k+1)$, so the final toggle $\tau_{k,k+1}$ adds this arc.  Therefore $\psi_k(T(P))=2$.

From this, it is clear that when applying $T$ repeatedly to $P$, the next partition $T^r(P)$ for which $\psi_k\left(T^r(P)\right)\not=1$ satisfies $\psi_k\left(T^r(P)\right)=2$.  Thus, in any orbit $\mcO$, \[\#\{P\in\mcO:\psi_k(P)=0\}=\#\{P\in\mcO:\psi_k(P)=2\},\] so $\psi_k$ is 1-mesic on orbits of $T$.
\end{proof}

The arc count statistic is $\sum_{1\leq i<j\leq n}\chi_{i,j}.$  Given any $i<j$ with $j-i\geq 2$, the coefficient of $\chi_{i,j}$ in $\psi_i$ and $\psi_{j-1}$ is 1, and the coefficient of $\chi_{i,j}$ in all other $\psi_k$ is 0.  For any $i$, the coefficient of $\chi_{i,i+1}$ in $\psi_i$ is 2, and the coefficient of $\chi_{i,i+1}$ in all other $\psi_k$ is 0.  Therefore, the arc count statistic is equal to $\frac{1}{2}\sum_{k=1}^{n-1}\psi_k.$  Theorem~\ref{thm:arccounthomomesy} now follows:

\begin{proof}[Proof of Theorem~\ref{thm:arccounthomomesy}]
By Theorem \ref{psi_homomesy}, $\psi_k$ is 1-mesic on orbits of $T$, for every $k\in[n-1]$.  So the arc count statistic $\alpha=\frac{1}{2}\sum_{k=1}^{n-1}\psi_k$ is $\frac{n-1}{2}$-mesic.
\end{proof}

\section{Toggling independent sets}

In this section we generalize our main result to the toggle operations on independent sets. We first introduce some definitions:
 
\begin{defn}
Let $G=(V,E)$ be a simple graph. For $v \in V$, we denote by $N(v)$ the set of neighbors of $v$. A set $W \subset V$ of vertices is called \emph{independent} if no two vertices in $W$ are adjacent. We denote by $\card(W)$ the cardinality of $W$ and $\ind(G)$ the set of all independent sets of $V$.
\end{defn}

\begin{defn}
Let $G=(V,E)$ be a simple graph and let $v \in V$. The \emph{toggle operation} $\tau_v$ on $\ind(G)$ is defined to be
\[
\tau_v(W)=\begin{cases} W\cup\{v\} & v\not\in W \text{ and } W\cup\{v\}\in\ind(G), \\ W\setminus\{v\} & v\in W, \\ W & \text{otherwise.} \end{cases}
\]
The \emph{toggle group} $W_G$ is the subgroup of the permutation group $S_{\ind(G)}$ generated by the $|V|$ toggle operations.
\label{def:toggle_independent_set}
\end{defn}

\begin{defn} An element $a \in W_G$ is called a \emph{partial Coxeter element} if it can be written as $a=\tau_{v_1}\tau_{v_2}\cdots\tau_{v_k}$, where $v_i \neq v_j$ for $i \neq j$, and $\{v_1,\ldots,v_k\} \subset V$. We call $a$ a \emph{Coxeter element} if $\{v_1,\ldots,v_k\}=V$.
\end{defn}

The toggle operations on $\NC(n)$, defined in the first section, are in fact a special case of Definition~\ref{def:toggle_independent_set}. Define $G=(V,E)$ to be the graph whose vertices represent the arcs in $\NC(n)$, and two vertices are connected by an edge if the corresponding pair of arcs cannot appear together in a noncrossing partition (that is, crossing, left-nesting, or right-nesting). By viewing the elements in $\NC(n)$ as collections of arcs, we see that $W\subset V$ is an independent set if and only if $W \in \NC(n)$. Note that the graph $G$ is just the base graph $\Gamma_n$ defined in the first section and shown in Figure \ref{fig:base-graph}. An example of this is in Figure \ref{fig:NCPasIS} where an element of $\NC(5)$ is displayed as an independent set of $\Gamma_5$. Thus, the action of the group $W_n$ on $\NC(n)$ is isomorphic to the action of the group $W_{\Gamma_n}$ on $\Gamma_n$.

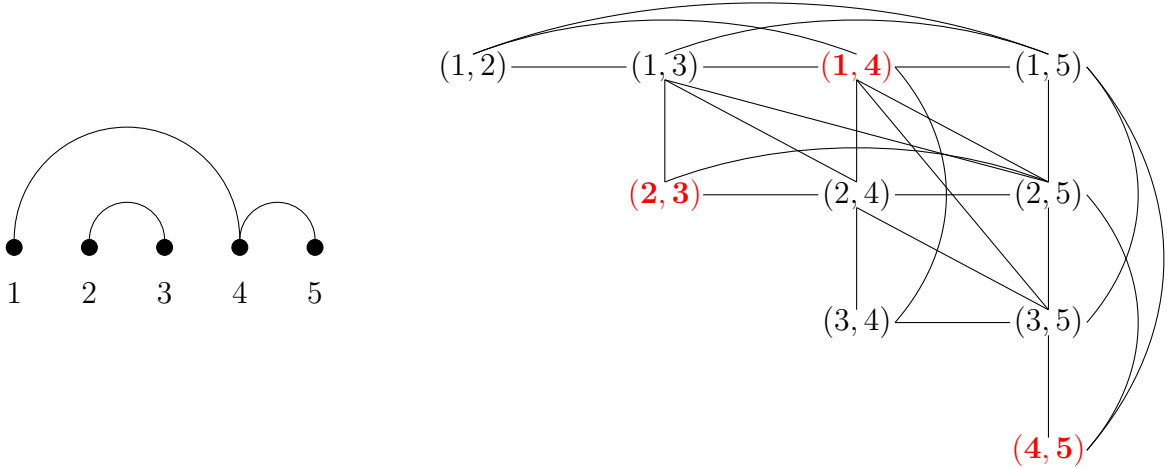
\begin{figure}
\begin{center}
\begin{tikzpicture}
\begin{scope}
\foreach \x in {1,...,5}
{
\filldraw (\x,-1.1) circle (3pt);
\path node at (\x,-1.7) {\x};
}
\draw (1,-1) arc (180:0:1.5cm);
\draw (2,-1) arc (180:0:.5cm);
\draw (4,-1) arc (180:0:.5cm);
\end{scope}
\begin{scope}[shift={(2,3)},xscale=2.55,yscale=1.7]
\draw (2,-0.9) arc (120:60:2cm);
\draw (2,-0.9) arc (120:60:3cm);
\draw (3,-0.9) arc (120:60:2cm);
\draw (3,-1.9) arc (120:60:2cm);
\draw (5.2,-1) arc (30:-30:2cm);
\draw (5.2,-1) arc (30:-30:3cm);
\draw (5.2,-2) arc (30:-30:2cm);
\draw (4.2,-1) arc (30:-30:2cm);
\draw (2.2,-1)--(2.8,-1);
\draw (3.2,-1)--(3.8,-1);
\draw (4.2,-1)--(4.8,-1);
\draw (3.2,-2)--(3.8,-2);
\draw (4.2,-2)--(4.8,-2);
\draw (4.2,-3)--(4.8,-3);
\draw (5,-1.1)--(5,-1.9);
\draw (5,-2.1)--(5,-2.9);
\draw (5,-3.1)--(5,-3.9);
\draw (4,-1.1)--(4,-1.9);
\draw (4,-2.1)--(4,-2.9);
\draw (3,-1.1)--(3,-1.9);
\draw (3,-1.1)--(4,-1.9);
\draw (3,-1.1)--(5,-1.9);
\draw (4,-2.1)--(5,-2.9);
\draw (4,-1.1)--(5,-1.9);
\draw (4,-1.1)--(5,-2.9);
\node at (2,-1) {$(1,2)$};
\node at (3,-1) {$(1,3)$};
\node[red] at (4,-1) {$\bf{(1,4)}$};
\node at (5,-1) {$(1,5)$};
\node[red] at (3,-2) {$\bf{(2,3)}$};
\node at (4,-2) {$(2,4)$};
\node at (5,-2) {$(2,5)$};
\node at (4,-3) {$(3,4)$};
\node at (5,-3) {$(3,5)$};
\node[red] at (5,-4) {$\bf{(4,5)}$};
\end{scope}
\end{tikzpicture}
\end{center}
\caption{The noncrossing partition $\{(1,4),(2,3),(4,5)\}$ of $[5]$ shown at left corresponds to the independent set of $\Gamma_5$ displayed in {\color{red} red} and bold on the right.}
\label{fig:NCPasIS}
\end{figure}

Next, let us introduce an analogue to the statistic from Definition \ref{def:indicator_statistics}:
\begin{defn} Given $G=(V,E)$ and $v\in V$, define the statistic $\psi_v:\ind(G)\to\ZZ$ in the following way:
\begin{eqnarray*}\psi_v(W)&=&2\chi_{v}(W)+\sum\limits_{u \in N(v)}\chi_{u}(W)\end{eqnarray*} where $\chi_{v}$ is the indicator function of the vertex $v$.
\label{def:indicator_statistics_graphs}
\end{defn}
For $G=\Gamma_n$ and $v=(k,k+1)$ this definition coincides with Definition \ref{def:indicator_statistics}.
The following result is a generalization of Theorem~\ref{thm:one_mesic}.
\begin{thm}
Let $G=(V,E)$ be a simple graph. Given $v \in V$, let $T$ be a partial Coxeter element that contains $\tau_v$. If $N(v)$ forms a clique in $G$, then the statistic $\psi_v$ is 1-mesic on orbits of $T$.
\label{thm:one_mesic_independentsets}
\end{thm}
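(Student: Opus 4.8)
The plan is to mimic the proof of Theorem~\ref{thm:one_mesic} essentially verbatim, using the hypothesis that $N(v)$ is a clique to recover the crucial property that was automatic in the noncrossing-partition setting: \emph{at most one element of $N(v)\cup\{v\}$ can lie in any given independent set}. Indeed, if $N(v)$ is a clique then $N(v)\cup\{v\}$ is a clique as well, so an independent set $W$ contains at most one vertex of $N(v)\cup\{v\}$. Consequently $\psi_v(W)\in\{0,1,2\}$, with the same trichotomy as before: $\psi_v(W)=0$ iff $W$ meets neither $v$ nor $N(v)$; $\psi_v(W)=1$ iff $W$ contains exactly one vertex of $N(v)$ (and not $v$); and $\psi_v(W)=2$ iff $v\in W$. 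As in the earlier proof, it then suffices to show that in every $T$-orbit $\mcO$, $\#\{W\in\mcO:\psi_v(W)=0\}=\#\{W\in\mcO:\psi_v(W)=2\}$, since $\psi_v$ taking only values $0,1,2$ with these two counts equal forces the orbit average to be $1$.

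First I would reduce, exactly as in Theorem~\ref{thm:one_mesic}, to the case that $\tau_v$ is the toggle applied \emph{last} in the word $T$: if not, conjugate $T$ by the toggles performed after $\tau_v$; by the lemmas in Section~\ref{sec:mainthmproof} this changes neither orbit sizes nor the homomesy (the relevant bijection preserves $\sum_{W\in\mcO}\chi_u(W)$ for every vertex $u$, hence preserves $\sum_{W\in\mcO}\psi_v(W)$ and orbit cardinalities). Next, list the vertices $A_1,\dots,A_m$ of $N(v)$ whose toggles appear in $T$, indexed in the order they are toggled; the remaining toggles of $T$ are irrelevant to the value of $\psi_v$ after a full application of $T$, because no vertex outside $N(v)\cup\{v\}$ is adjacent to $v$, so such toggles can never obstruct insertion of $v$ (though they may obstruct insertion of some $A_i$, which is fine — we only need to track whether \emph{some} $A_i$ gets inserted, or none does).

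The core of the argument is then the same case analysis as in Theorem~\ref{thm:one_mesic}, now phrased for independent sets. If $v\in W$, then when computing $T(W)$ each $\tau_{A_i}$ fails to insert $A_i$ (since $v\in W$ and $A_i\sim v$), and the final $\tau_v$ removes $v$; so $\psi_v(T(W))=0$. If $\psi_v(W)=0$, we scan $\tau_{A_1},\dots,\tau_{A_m}$: as soon as some $A_i$ is successfully inserted, the later toggles $\tau_{A_{i+1}},\dots,\tau_{A_m},\tau_v$ all do nothing ($\tau_{A_j}$ for $j>i$ cannot insert $A_j$ since $A_j\sim A_i\in W$ by the clique hypothesis, and $\tau_v$ cannot insert $v$ since $A_i\sim v$), hence $\psi_v(T(W))=1$; if no $A_i$ is ever inserted, then nothing in $N(v)$ obstructs $v$, so $\tau_v$ inserts $v$ and $\psi_v(T(W))=2$. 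Finally, if $\psi_v(W)=1$, say $A_i\in W$: the toggles $\tau_{A_j}$ for $j<i$ do nothing, $\tau_{A_i}$ removes $A_i$, and then the same dichotomy as the previous case applies to $\tau_{A_{i+1}},\dots,\tau_{A_m},\tau_v$, yielding $\psi_v(T(W))\in\{1,2\}$. Thus, iterating $T$, the first return to a value $\neq 1$ is always the value $2$, and each value-$2$ element is immediately followed (after one application of $T$) by a value-$0$ element; pairing these up along the orbit gives the desired equality of counts.

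The main obstacle — really the only place the new hypothesis does work — is the clique condition on $N(v)$, which I expect to be used in exactly two spots: (i) to guarantee $\psi_v\in\{0,1,2\}$ and the clean trichotomy, and (ii) in the ``Case 1'' steps above, to ensure that once one neighbor $A_i$ enters the set, no \emph{later} neighbor $A_j$ can also enter, so that the run of $\psi_v=1$ is not disrupted before $\tau_v$ is reached. Without the clique hypothesis both of these fail, and in fact $\psi_v$ need not even be bounded by $2$. Everything else is a routine transcription of the $\NC(n)$ argument, with ``arc of the form $(i,k+1)$ or $(k,j)$'' replaced by ``vertex in $N(v)$'' and ``the arc $(k,k+1)$'' replaced by ``the vertex $v$.''
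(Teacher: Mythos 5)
Your proposal is correct and follows essentially the same route as the paper, which simply observes that the only property of $\Gamma_n$ used in the proof of Theorem~\ref{thm:one_mesic} is that the neighborhood of $(k,k+1)$ is a clique (so that at most one vertex of $N(v)\cup\{v\}$ lies in any independent set) and then transfers that argument wholesale. You have merely written out in full the case analysis that the paper leaves implicit by reference.
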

\begin{proof}
Let us examine the proof of Theorem~\ref{thm:one_mesic}. The only property of the graph $\Gamma_n$ that is used in the proof is that for any $k$, the set of neighbors of the vertex $w=(k,k+1)$ is a clique. This implies that at most one of the vertices in $\{w\} \cup N(w)$ can be contained in an independent set in $\Gamma_n$. Thus, similarly to the proof of Theorem~\ref{thm:one_mesic}, we conclude that $\psi_v$ is 1-mesic on orbits of $T$.
\end{proof}
We are ready now to present a generalization of Theorem~\ref{thm:arccounthomomesy}.
\begin{thm}
Let $G=(V,E)$ be a simple graph with maximal independent set $U$ of vertices that satisfies the following two properties:
\begin{enumerate}
\item For any $u \in U$, the set of vertices $N(u)$ forms a clique in $G$.
\item Any vertex in $V \setminus U$ has exactly two neighbors in $U$.
\end{enumerate}
Let $T$ be a partial Coxeter element containing all toggles $\tau_u$ for $u \in U$. Then the triple $(\ind(G),\card,T)$ is $\frac{A}{2}$-mesic, where $A$ is the cardinality of $U$.
\label{thm:main_result_independence}
\end{thm}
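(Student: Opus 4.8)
The strategy is to reduce the statement to Theorem~\ref{thm:one_mesic_independentsets} applied at each vertex $u \in U$, exactly mirroring the way Theorem~\ref{thm:arccounthomomesy} was derived from Theorem~\ref{psi_homomesy} via the identity $\alpha = \frac{1}{2}\sum_{k=1}^{n-1}\psi_k$. First I would observe that since $T$ contains $\tau_u$ for every $u \in U$, and since property~(1) guarantees that $N(u)$ is a clique in $G$ for each $u \in U$, Theorem~\ref{thm:one_mesic_independentsets} applies to each such $u$: the statistic $\psi_u$ is $1$-mesic on $T$-orbits of $\ind(G)$. Summing over all $u \in U$, the statistic $\sum_{u \in U}\psi_u$ is $A$-mesic, where $A = \card(U)$.

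The key combinatorial step is then to express $\card$ as a multiple of $\sum_{u\in U}\psi_u$. Expanding the definition, $\sum_{u \in U}\psi_u(W) = \sum_{u\in U}\bigl(2\chi_u(W) + \sum_{x \in N(u)}\chi_x(W)\bigr)$. For a vertex $u \in U$, the coefficient of $\chi_u$ in this sum is $2$ from the first term; it picks up nothing further because $U$ is independent, so $u \notin N(u')$ for any other $u' \in U$. For a vertex $x \in V \setminus U$, the coefficient of $\chi_x$ is $\sum_{u \in U}\#\{u : x \in N(u)\}$, which by property~(2) equals exactly $2$. Hence every vertex of $V$ contributes coefficient $2$, so $\sum_{u \in U}\psi_u = 2\sum_{v \in V}\chi_v = 2\,\card$ as functions on $\ind(G)$. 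Therefore $\card = \frac{1}{2}\sum_{u\in U}\psi_u$, and since the right-hand side is $\frac{A}{2}$-mesic by the previous paragraph, $(\ind(G),\card,T)$ is $\frac{A}{2}$-mesic.

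Two small points need care. One should check that property~(2) is stated for all $x \in V \setminus U$ (which it is), so that no vertex outside $U$ is ever adjacent to fewer than two, or more than two, elements of $U$ — this is precisely what makes all coefficients uniformly equal to $2$; if some vertex had a different number of neighbors in $U$, the linear combination would not collapse to a multiple of $\card$. One should also note that the hypothesis ``$U$ maximal independent'' is what ensures $U$ is a genuine element of $\ind(G)$ and, together with property~(2), that $V\setminus U$ is covered appropriately; maximality itself is not heavily used beyond guaranteeing $U\in\ind(G)$, but it is the natural setting (for $G = \Gamma_n$, $U$ is the maximal independent set $\{(k,k+1) : 1\le k\le n-1\}$ of ``short arcs,'' $A = n-1$, and one recovers Theorem~\ref{thm:arccounthomomesy}).

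\textbf{Main obstacle.} There is no serious obstacle: the entire argument is the coefficient bookkeeping in the second paragraph, and the only thing to be vigilant about is that properties~(1) and~(2) are exactly the hypotheses needed to invoke Theorem~\ref{thm:one_mesic_independentsets} at each $u \in U$ and to force the coefficient of every $\chi_v$ to equal $2$. The real content was already done in proving Theorem~\ref{thm:one_mesic_independentsets}; this theorem is the clean packaging of that result into a statement about the cardinality statistic.
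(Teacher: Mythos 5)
Your proposal is correct and follows essentially the same route as the paper: invoke Theorem~\ref{thm:one_mesic_independentsets} at each $u\in U$ to get that $\sum_{u\in U}\psi_u$ is $A$-mesic, then use property~(2) (and the independence of $U$) to show $\sum_{u\in U}\psi_u = 2\card$. Your coefficient bookkeeping simply spells out the step the paper states in one line.
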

\begin{proof}
We have $|U|=A$, so by Theorem~\ref{thm:one_mesic_independentsets}  $\sum\limits_{u \in U}\psi_u$ is $A$-mesic on orbits of $T$. On the other hand, property (2) implies that $\sum\limits_{u \in U}\psi_u=2\sum\limits_{v \in V}\chi_{v}=2\card$. Therefore the triple $(\ind(G),\card,T)$ is $\frac{A}{2}$-mesic.
\end{proof}
\begin{cor}
Let $G$, $A$ and $U$ be as in Theorem~\ref{thm:main_result_independence}, and
let $T$ be a Coxeter element. Then the triple $(\ind(G),\card,T)$ is $\frac{A}{2}$-mesic.
\end{cor}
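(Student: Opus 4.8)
The plan is to deduce this immediately from Theorem~\ref{thm:main_result_independence}. First I would unwind the relevant definitions. By definition, a Coxeter element of $W_G$ is an element $T = \tau_{v_1}\tau_{v_2}\cdots\tau_{v_{|V|}}$ in which the vertices $v_1,\dots,v_{|V|}$ are pairwise distinct and exhaust all of $V$; in particular the defining condition $v_i\neq v_j$ for $i\neq j$ holds, so $T$ is a \emph{partial} Coxeter element. Since $U\subseteq V$, each vertex $u\in U$ occurs among the $v_i$, and hence the toggle $\tau_u$ appears in the word $T$ for every $u\in U$.

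Consequently, the Coxeter element $T$ satisfies exactly the hypothesis of Theorem~\ref{thm:main_result_independence}, namely that $T$ is a partial Coxeter element containing all toggles $\tau_u$ for $u\in U$. Applying Theorem~\ref{thm:main_result_independence} to this $T$ then yields that $(\ind(G),\card,T)$ is $\tfrac{A}{2}$-mesic, with $A=\card(U)$, which is the claim.

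There is no real obstacle in this argument: the entire content lies in Theorem~\ref{thm:main_result_independence}, and this corollary merely records the special case of Coxeter elements, in direct analogy with the corollary stated just after Theorem~\ref{thm:arccounthomomesy} in the noncrossing-partition setting. If one wishes to say slightly more, one can observe that the conclusion holds \emph{simultaneously} for every Coxeter element of $W_G$ (indeed for every partial Coxeter element containing all $\tau_u$, $u\in U$), so that $\card$ is homomesic across this entire family of actions; but this remark is not needed for the proof.
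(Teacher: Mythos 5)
Your proposal is correct and is exactly the intended argument: a Coxeter element contains every toggle $\tau_v$ for $v\in V$, hence in particular every $\tau_u$ for $u\in U$, so Theorem~\ref{thm:main_result_independence} applies verbatim. The paper treats this as immediate and gives no separate proof, so your write-up matches the paper's (implicit) reasoning.
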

\begin{defn}
Let $G=(V,E)$ be a graph. Then we say that $G$ is \emph{2-cliquish} if for \emph{some} maximal independent set $U$, the conditions of Theorem~\ref{thm:main_result_independence} are satisfied.
\end{defn}

It is easy to construct various classes of 2-cliquish graphs.  The rest of this section is devoted to describing some such classes.

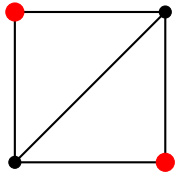
\begin{figure}
\begin{center}
\begin{tikzpicture}
\def\r{0.08}
\def\R{0.12}

\draw[thick, -] (0,0) -- (2,0) -- (2,2) -- (0,2) -- (0,0) -- (2,2);
\draw[fill] (0,0) circle [radius=\r];
\draw[red,fill=red] (2,0) circle [radius=\R];
\draw[fill] (2,2) circle [radius=\r];
\draw[red,fill=red] (0,2) circle [radius=\R];
\end{tikzpicture}
\end{center}
\caption{The complete graph $K_4$ with an edge removed.  The maximal independent set is shown with the large \textcolor{red}{red} vertices.}
\label{k4minusedge}
\end{figure}

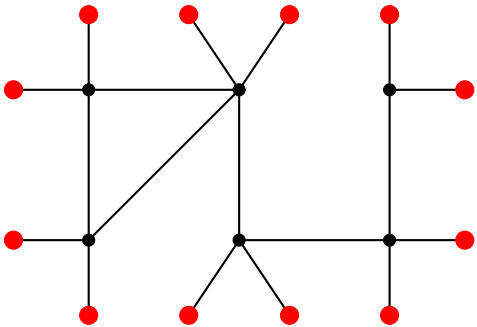
\begin{figure}
\begin{center}
\begin{tikzpicture}
\def\r{0.08}
\def\R{0.12}
\draw[thick, -] (2,2) -- (0,2) -- (0,0) -- (2,2) -- (2,0) -- (4,0) -- (4,2);
\draw[thick, -] (-1,0) -- (0,0) -- (0,-1);
\draw[thick, -] (5,0) -- (4,0) -- (4,-1);
\draw[thick, -] (-1,2) -- (0,2) -- (0,3);
\draw[thick, -] (5,2) -- (4,2) -- (4,3);
\draw[thick, -] (1.33,3) -- (2,2) -- (2.67,3);
\draw[thick, -] (1.33,-1) -- (2,0) -- (2.67,-1);
\draw[fill] (0,0) circle [radius=\r];
\draw[fill] (0,2) circle [radius=\r];
\draw[fill] (4,0) circle [radius=\r];
\draw[fill] (2,0) circle [radius=\r];
\draw[fill] (2,2) circle [radius=\r];
\draw[fill] (4,2) circle [radius=\r];
\draw[red,fill=red] (-1,2) circle [radius=\R];
\draw[red,fill=red] (-1,0) circle [radius=\R];
\draw[red,fill=red] (5,2) circle [radius=\R];
\draw[red,fill=red] (5,0) circle [radius=\R];
\draw[red,fill=red] (0,3) circle [radius=\R];
\draw[red,fill=red] (1.33,3) circle [radius=\R];
\draw[red,fill=red] (2.67,3) circle [radius=\R];
\draw[red,fill=red] (4,3) circle [radius=\R];
\draw[red,fill=red] (0,-1) circle [radius=\R];
\draw[red,fill=red] (1.33,-1) circle [radius=\R];
\draw[red,fill=red] (2.67,-1) circle [radius=\R];
\draw[red,fill=red] (4,-1) circle [radius=\R];
\end{tikzpicture}
\end{center}
\caption{A 2-cliquish graph formed starting with a graph (the subgraph of black vertices) and attaching two new vertices to each vertex.  The large \textcolor{red}{red} vertices form the maximal independent set.}
\label{add2vertices}
\end{figure}

\begin{ex}
Some examples of 2-cliquish graphs are as follows.
\begin{itemize}
\item A complete graph with a single edge removed is 2-cliquish.  The two vertices without an edge connecting them form the maximal independent set.  An example of this type of graph is in Figure \ref{k4minusedge}.
\item Given any graph $G$, define a graph $G'$ in the following way.  Start with $G$.  For every vertex $v$ in the graph, add two new vertices and connect $v$ to the two new vertices.  Then $G'$ is 2-cliquish and the maximal independent set is the added vertices.  An example of this type of graph is in Figure \ref{add2vertices}.
\item Let $C_n$ denote the cycle graph with $n$ vertices.  For every edge $e$ in $C_n$ add a vertex $v_e$ to the graph and add edges from $v_e$ to each endpoint of $e$.  This graph is 2-cliquish.  Its maximal independent set is the set of $n$ added vertices $\{v_e\}$. An example of this type of graph is in Figure \ref{c6plus}.
\end{itemize}
\end{ex}

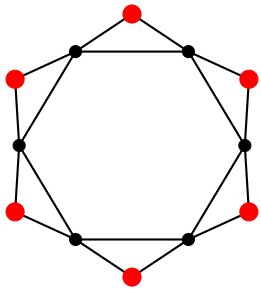
\begin{figure}
\begin{center}
\begin{tikzpicture}
\def\r{0.08}
\def\R{0.12}
\draw[thick, -] (0.75,-1.25) -- (-0.75,-1.25) -- (-1.5,0) -- (-0.75,1.25) -- (0.75,1.25) -- (1.5,0) -- (0.75,-1.25);
\draw[thick, -] (0.75,-1.25) -- (0,-1.75) -- (-0.75,-1.25);
\draw[thick, -] (0.75,1.25) -- (0,1.75) -- (-0.75,1.25);
\draw[thick, -] (-0.75,-1.25) -- (-1.554,-0.882) -- (-1.5,0);
\draw[thick, -] (-0.75,1.25) -- (-1.554,0.882) -- (-1.5,0);
\draw[thick, -] (0.75,-1.25) -- (1.554,-0.882) -- (1.5,0);
\draw[thick, -] (0.75,1.25) -- (1.554,0.882) -- (1.5,0);
\draw[fill] (0.75,-1.25) circle [radius=\r];
\draw[fill] (-0.75,-1.25) circle [radius=\r];
\draw[fill] (-1.5,0) circle [radius=\r];
\draw[fill] (-0.75,1.25) circle [radius=\r];
\draw[fill] (0.75,1.25) circle [radius=\r];
\draw[fill] (1.5,0) circle [radius=\r];
\draw[red,fill=red] (0,1.75) circle [radius=\R];
\draw[red,fill=red] (0,-1.75) circle [radius=\R];
\draw[red,fill=red] (-1.554,-0.882) circle [radius=\R];
\draw[red,fill=red] (1.554,-0.882) circle [radius=\R];
\draw[red,fill=red] (-1.554,0.882) circle [radius=\R];
\draw[red,fill=red] (1.554,0.882) circle [radius=\R];
\end{tikzpicture}
 \end{center}
\caption{The graph formed from $C_6$ with an extra vertex added for each edge adjacent to the endpoints of the corresponding edge.  This graph is 2-cliquish and the large \textcolor{red}{red} vertices form the maximal independent set.}
\label{c6plus}
\end{figure}

The following theorem describes ways to form 2-cliquish graphs from other 2-cliquish graphs.

\begin{thm}\label{thm:formnewgraphs}
Let $G=(V,E)$ and $G'=(V',E')$ be 2-cliquish graphs with maximal independent sets $U$ and $U'$ respectively.
\begin{enumerate}
\item The disjoint union of $G$ and $G'$ is 2-cliquish with maximal independent set $U\cup U'$.
\item Let $e$ be an edge not in $E$ with endpoints in $V\setminus U$.  Then $(V,E\cup\{e\})$ is 2-cliquish with maximal independent set $U$.
\item Let $e$ be an edge in $E$ with endpoints $v,w\in V\setminus U$ such that $v$ and $w$ do not have a common neighbor in $U$.  Then $(V,E\setminus\{e\})$ is 2-cliquish with maximal independent set $U$.
\end{enumerate}
\end{thm}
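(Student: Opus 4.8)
The plan is to check, in each of the three cases, that the proposed set ($U\cup U'$ in case~(1), $U$ in cases~(2) and~(3)) is still a maximal independent set satisfying conditions~(1) and~(2) in the statement of Theorem~\ref{thm:main_result_independence}. In every case the modification is confined to the ``non-$U$'' part of the graph, so most of the verification is routine bookkeeping with the definitions; the single place where a hypothesis does real work is part~(3).

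For part~(1), I would first observe that $U\cup U'$ is independent in the disjoint union $G\sqcup G'$ because there are no edges between the two components, and that it is maximal because any vertex lying outside $U\cup U'$ belongs to $V\setminus U$ or to $V'\setminus U'$ and hence already has neighbors inside its own component's portion of $U\cup U'$. Condition~(1) holds because for $u\in U$ one has $N_{G\sqcup G'}(u)=N_G(u)$, which is a clique by hypothesis (and symmetrically for $u\in U'$), and condition~(2) holds because a vertex of $V\setminus U$ has exactly two neighbors in $U$ and none in $U'$, hence exactly two in $U\cup U'$.

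For part~(2), adding an edge $e$ with both endpoints in $V\setminus U$ leaves $U$ independent and leaves every neighborhood $N(u)$ with $u\in U$ unchanged as a vertex set; since adding edges never destroys a clique, condition~(1) survives, and condition~(2) is untouched because $e$ is not incident to $U$. Maximality of $U$ is also preserved (indeed reinforced), since every $v\in V\setminus U$ retains its two neighbors in $U$. The argument for part~(3) begins the same way: deleting an edge $e=\{v,w\}$ with $v,w\in V\setminus U$ keeps $U$ independent, keeps condition~(2) intact, and preserves maximality of $U$, because the set of neighbors in $U$ of any given vertex is unaffected by the deletion.

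The one condition in part~(3) that could fail is~(1): if some $u\in U$ had both $v$ and $w$ in $N(u)$, then deleting $e$ would break the clique structure on $N(u)$. This is exactly what the hypothesis that $v$ and $w$ have no common neighbor in $U$ rules out; thus for each $u\in U$ at most one of $v,w$ lies in $N(u)$, so no edge of the clique $N(u)$ is deleted, and condition~(1) continues to hold. I expect this verification of condition~(1) in part~(3) --- together with the observation that the ``no common neighbor in $U$'' hypothesis is precisely the obstruction --- to be the only step that is not purely mechanical; parts~(1) and~(2) follow immediately from the definitions.
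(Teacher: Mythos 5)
Your proposal is correct and follows essentially the same route as the paper: a direct verification that the designated independent set remains maximal and that the two defining conditions of 2-cliquishness survive each modification, with the ``no common neighbor in $U$'' hypothesis doing the only nontrivial work in part~(3). If anything, your write-up is slightly more careful than the paper's (e.g., you explicitly check maximality in part~(3)), but the substance is identical.
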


\begin{proof}
Part (1) is clear from the definition.  For part (2), consider two vertices $v,w\in V\setminus U$ that are not adjacent in $G$.  Since $v$ and $w$ are not adjacent, they cannot have a common neighbor in $U$, since $N(u)$ is a clique for all $u\in U$.  Thus, adding an edge $e$ between $v$ and $w$ does not change the fact that $N(u)$ is a clique for all $u\in U$.  Since the endpoints of $e$ are in $V\setminus U$, the graph $(V,E\cup\{e\})$ also satisfies the condition that every vertex in $V\setminus U$ has exactly two neighbors in $U$.  It also does not change the fact that $U$ is a maximal independent set.  Therefore, $(V,E\cup\{e\})$ is 2-cliquish.

To prove (3), let $e$ be an edge in $E$ with endpoints $v,w\in V\setminus U$ such that $v$ and $w$ do not have a common neighbor in $U$.  Since $e$ has no endpoints in $U$, $(V,E\setminus\{e\})$ satisfies the condition that every vertex in $V\setminus U$ has exactly two neighbors in $U$.  Also, $N(u)$ is a clique for all $u\in U$ because this is true for $G$ so it is true when removing an edge between vertices without a common neighbor in $U$.  Thus, $(V,E\cup\{e\})$ is 2-cliquish.
\end{proof}

We now discuss how to generate all 2-cliquish graphs with a given number of vertices.

\begin{defn}
Let $G=(V,E)$ be 2-cliquish.
\begin{itemize}
\item We say $G$ is \emph{skeletal} if no edges can be removed from it as in part (3) of Theorem \ref{thm:formnewgraphs}.
\item The graph formed from removing edges from $G$ when possible in accordance with part (3) of Theorem \ref{thm:formnewgraphs} is said to be the \emph{skeletalization} of $G$.
\end{itemize}
\end{defn}

In order to generate 2-cliquish graphs, it suffices to begin with the skeletal graphs, and add edges when possible as in part (2) of Theorem \ref{thm:formnewgraphs}.  Figure \ref{fig:skeletal} shows an example of this.  A skeletal graph $G$ is on the left.  There are two pairs of elements that can be connected by edges as in part (2) of Theorem \ref{thm:formnewgraphs}.  This leads to the four 2-cliquish graphs whose skeletalization is $G$.

\begin{figure}
\begin{center}
\begin{tikzpicture}
\def\r{0.08}
\def\R{0.12}
\draw[thick, -] (0,1) -- (1,2) -- (0,3) -- (-1,2) -- (0,1);
\draw[thick, -] (1,2) --(-1,2);
\draw[thick, -] (-1,0) -- (0,0) -- (1,0);
\draw[red,fill=red] (0,1) circle [radius=\R];
\draw[fill] (1,2) circle [radius=\r];
\draw[red,fill=red] (0,3) circle [radius=\R];
\draw[fill] (-1,2) circle [radius=\r];
\draw[red,fill=red] (-1,0) circle [radius=\R];
\draw[fill] (0,0) circle [radius=\r];
\draw[red,fill=red] (1,0) circle [radius=\R];

\draw[thick, -] (4,1) -- (5,2) -- (4,3) -- (3,2) -- (4,1);
\draw[thick, -] (5,2) --(3,2);
\draw[thick, -] (3,0) -- (4,0) -- (5,0);
\draw[thick, -] (4,0) -- (3,2);
\draw[red,fill=red] (4,1) circle [radius=\R];
\draw[fill] (5,2) circle [radius=\r];
\draw[red,fill=red] (4,3) circle [radius=\R];
\draw[fill] (3,2) circle [radius=\r];
\draw[red,fill=red] (3,0) circle [radius=\R];
\draw[fill] (4,0) circle [radius=\r];
\draw[red,fill=red] (5,0) circle [radius=\R];

\draw[thick, -] (8,1) -- (9,2) -- (8,3) -- (7,2) -- (8,1);
\draw[thick, -] (9,2) --(7,2);
\draw[thick, -] (7,0) -- (8,0) -- (9,0);
\draw[thick, -] (8,0) -- (9,2);
\draw[red,fill=red] (8,1) circle [radius=\R];
\draw[fill] (9,2) circle [radius=\r];
\draw[red,fill=red] (8,3) circle [radius=\R];
\draw[fill] (7,2) circle [radius=\r];
\draw[red,fill=red] (7,0) circle [radius=\R];
\draw[fill] (8,0) circle [radius=\r];
\draw[red,fill=red] (9,0) circle [radius=\R];

\draw[thick, -] (12,1) -- (13,2) -- (12,3) -- (11,2) -- (12,1);
\draw[thick, -] (13,2) --(11,2);
\draw[thick, -] (11,0) -- (12,0) -- (13,0);
\draw[thick, -] (12,0) -- (11,2);
\draw[thick, -] (12,0) -- (13,2);
\draw[red,fill=red] (12,1) circle [radius=\R];
\draw[fill] (13,2) circle [radius=\r];
\draw[red,fill=red] (12,3) circle [radius=\R];
\draw[fill] (11,2) circle [radius=\r];
\draw[red,fill=red] (11,0) circle [radius=\R];
\draw[fill] (12,0) circle [radius=\r];
\draw[red,fill=red] (13,0) circle [radius=\R];
\end{tikzpicture}
\end{center}
\caption{The four 2-cliquish graphs whose skeletalization is the graph on the left.  The two in the middle are isomorphic, so they are considered the same unlabeled graph.}
\label{fig:skeletal}
\end{figure}
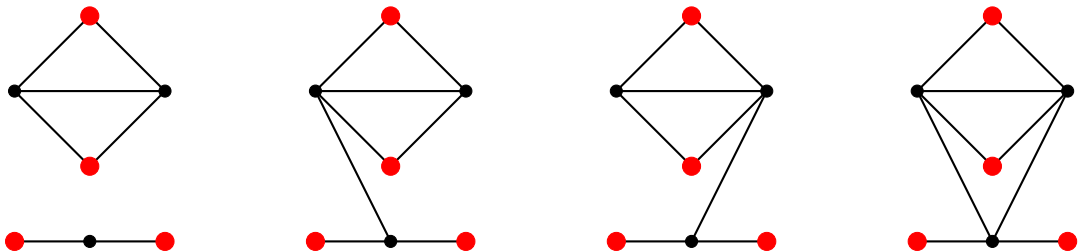

A \emph{multigraph} is a graph that may contain multiple edges with the same pair of endpoints.

\begin{thm}\label{thm:skeletal bijection}
There is a bijection between pairs $(\Gamma,U)$, where $\Gamma$ is a skeletal 2-cliquish graph with $n$ vertices and $U$ is a maximal independent set of $\Gamma$, and loopless multigraphs $G=(V,E)$ that satisfy $|V|+|E|=n$.
\end{thm}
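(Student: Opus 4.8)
The plan is to exhibit an explicit pair of mutually inverse maps. Throughout, a pair $(\Gamma,U)$ is understood to consist of a maximal independent set $U$ satisfying conditions (1) and (2) of Theorem~\ref{thm:main_result_independence} together with a graph $\Gamma$ that is skeletal with respect to $U$ (this witnessing requirement is forced: a skeletal $2$-cliquish graph may well have other maximal independent sets, but only the witnessing ones can correspond to multigraphs). The key observation is that such a $(\Gamma,U)$ is completely recovered from the assignment, to each vertex $v\in V(\Gamma)\setminus U$, of the two-element set $N(v)\cap U$ of its neighbors in $U$. Indeed, $U$ is independent, so $\Gamma$ has no edges inside $U$; the edges between $U$ and $V(\Gamma)\setminus U$ are by definition exactly the incidences recorded by this assignment; and for $v,w\in V(\Gamma)\setminus U$ one has $v\sim w$ if and only if $v$ and $w$ have a common neighbor in $U$. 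The ``if'' direction is condition~(1) of Theorem~\ref{thm:main_result_independence}: a common neighbor $u$ puts both $v$ and $w$ into the clique $N(u)$. The ``only if'' direction is skeletality: if $v\sim w$ but $v,w$ had no common neighbor in $U$, then part~(3) of Theorem~\ref{thm:formnewgraphs} would permit deleting the edge $vw$, contradicting skeletality. Since $\Gamma$ is simple, the two neighbors of any $v\notin U$ are distinct, so $N(v)\cap U$ is a genuine loopless edge on the vertex set $U$.

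Given $(\Gamma,U)$, I define the multigraph $G=(V,E)$ by $V=U$, with $E$ the multiset indexed by $V(\Gamma)\setminus U$, the element indexed by $v$ being the pair $N(v)\cap U$. Then $G$ is loopless and $|V|+|E|=|U|+|V(\Gamma)\setminus U|=n$. Conversely, given a loopless multigraph $G=(V,E)$ with $|V|+|E|=n$, I build $\Gamma$ with vertex set $V\sqcup E$: join each $e\in E$ to its two endpoints in $V$, join $e,e'\in E$ whenever they share an endpoint in $G$, and include no other edges; put $U=V$. Then $U$ is independent (every edge of $\Gamma$ meets $E$) and maximal (each $e\in E$ is adjacent to its endpoints). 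For $u\in U$, $N(u)$ is the set of edges of $G$ at $u$, any two of which share the endpoint $u$ in $G$ and are therefore adjacent in $\Gamma$, so $N(u)$ is a clique; and each $e\in V(\Gamma)\setminus U=E$ has exactly two neighbors in $U$, namely its two distinct endpoints. Hence $\Gamma$ is $2$-cliquish with witness $U$, and it is skeletal because any edge of $\Gamma$ with both ends outside $U$ joins two edges of $G$ with a common endpoint, so part~(3) of Theorem~\ref{thm:formnewgraphs} never applies.

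It remains to check the two maps are inverse. Starting from $(\Gamma,U)$, forming $G$, and rebuilding, the new graph has vertex set $U\sqcup(V(\Gamma)\setminus U)=V(\Gamma)$, and by the key observation its edge set consists of nothing inside $U$, the incidences between $U$ and $V(\Gamma)\setminus U$, and the pairs in $V(\Gamma)\setminus U$ sharing a common neighbor in $U$ --- that is, exactly the edge set of $\Gamma$ --- while $U$ is returned unchanged. Starting from $G$, forming $(\Gamma,U)$, and rebuilding, we recover $V=U$ and, for each $e\in E$ regarded as a vertex of $\Gamma$, the pair $N_\Gamma(e)\cap U$ equal to the two endpoints of $e$, so we recover $E$ with its endpoint data. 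Both constructions are canonical, so the bijection descends to isomorphism classes.

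The step I expect to be the crux is the key observation of the first paragraph: that the edges of $\Gamma$ lying entirely inside $V(\Gamma)\setminus U$ carry no information beyond the neighbor-pairs. This is precisely where the clique hypothesis and the skeletality hypothesis are used in tandem --- neither alone suffices --- and once it is established the rest is routine bookkeeping, together with a quick check of the degenerate cases $n=0$ and $n=1$, where both sides are singletons (the empty graph, respectively the one-vertex graph with no edges).
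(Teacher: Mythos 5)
Your proposal is correct and follows essentially the same construction as the paper: vertices of the multigraph are the elements of $U$, and each vertex of $V(\Gamma)\setminus U$ becomes an edge on its two neighbors in $U$. You are in fact more careful than the paper's own (rather terse) proof, in particular in isolating the key observation that the clique condition and skeletality together force the edges inside $V(\Gamma)\setminus U$ to be exactly the pairs with a common neighbor in $U$, which is what makes the two maps mutually inverse.
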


\begin{proof}
Let $\Gamma=(V,E)$ be a skeletal 2-cliquish graph with maximal independent set $U$, and consider the following construct of a loopless multigraph $(V',E')$ from $\Gamma$:  Let $V'=U$, and for each vertex $v\in V\setminus U$,  construct an edge $e'\in E'$, as follows: $v$ has exactly two neighbors in $U$, say $u_1$ and $u_2$; let $e'$ be an edge connecting $u_1$ and $u_2$. Thus $|V'|+|E'|=|U|+|V\setminus U|=|V|=n$, and it is clear that $(V',E')$ is loopless.

For the reverse direction, let $(V',E')$ be a loopless multigraph with $|V'|+|E'|=n$. Construct a graph $\Gamma=(V,E)$ whose vertices are in bijection with those of $V'$ and $E'$; let $U=V'$ and $V\setminus U=E'$. Connect $v\in V\setminus U$ and $u\in U$ with an edge if $u$ is an endpoint of $v$ in $V'$, and connect $v_1,v_2\in V\setminus U$ with an edge if they share a common endpoint in $V'$. (We never connect two elements of $U$ with an edge.) It is clear that $(V',E')$ is 2-cliquish with maximal independent set $U$, and that this map is the inverse of the one in the other direction.
\end{proof}

\begin{ex}
An example of this bijection can be seen in Figure \ref{bijection example}.  We start with the multigraph $M$ on the left and construct the skeletal 2-cliquish graph on the right.  The vertices $A,B,C,D,E$ of the multigraph correspond to the vertices $a,b,c,d,e$ in the skeletal graph.  The set $\{a,b,c,d,e\}$ is the maximal independent set of our skeletal graph.  The edges $e_1,e_2,e_3,e_4$ correspond to the vertices $v_1,v_2,v_3,v_4$ of the skeletal graph.  We use the multigraph to determine which two vertices in $\{a,b,c,d,e\}$ the other vertices are adjacent to.  For example, the edge $e_1$ has endpoints $A$ and $B$, so we add edges from $v_1$ to $a$ and $b$.  Lastly, whenever two vertices have a common neighbor in the independent set $\{a,b,c,d,e\}$, we must add an edge connecting them.  Therefore, we place an edge between $v_1$ and $v_2$, another between $v_1$ and $v_3$, another between $v_2$ and $v_3$, and another between $v_3$ and $v_4$.
\end{ex}

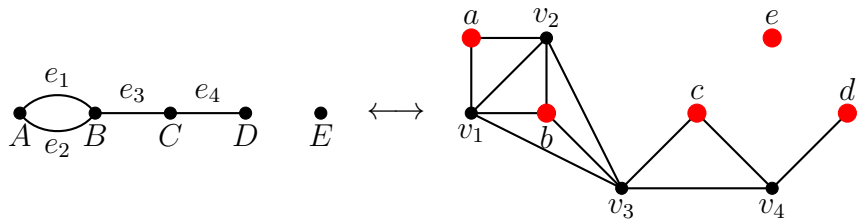
\begin{figure}
\begin{center}
\begin{tikzpicture}
\def\r{0.08}
\def\R{0.12}
\draw[thick, -] (1,0) -- (2,0) -- (3,0);
\draw[thick, -] (0,0) to[out=55,in=125] (1,0);
\draw[thick, -] (0,0) to[out=-55,in=-125] (1,0);
\draw[fill] (0,0) circle [radius=\r];
\draw[fill] (1,0) circle [radius=\r];
\draw[fill] (2,0) circle [radius=\r];
\draw[fill] (3,0) circle [radius=\r];
\draw[fill] (4,0) circle [radius=\r];
\node[below] at (0,0) {$A$};
\node[below] at (1,0) {$B$};
\node[below] at (2,0) {$C$};
\node[below] at (3,0) {$D$};
\node[below] at (4,0) {$E$};
\node[above] at (0.5,0.2) {$e_1$};
\node[below] at (0.5,-0.2) {$e_2$};
\node[above] at (1.5,0) {$e_3$};
\node[above] at (2.5,0) {$e_4$};
\node at (5,0) {$\longleftrightarrow$};
\draw[thick, -] (6,0) -- (6,1) -- (7,1) -- (7,0) -- (6,0) -- (7,1) -- (8,-1) -- (9,0) -- (10,-1) -- (8,-1) -- (6,0);
\draw[thick, -] (7,0) -- (8,-1);
\draw[thick, -] (11,0) -- (10,-1);
\draw[red,fill=red] (6,1) circle [radius=\R];
\draw[fill] (6,0) circle [radius=\r];
\draw[fill] (7,1) circle [radius=\r];
\draw[red,fill=red] (7,0) circle [radius=\R];
\draw[fill] (8,-1) circle [radius=\r];
\draw[red,fill=red] (9,0) circle [radius=\R];
\draw[fill] (10,-1) circle [radius=\r];
\draw[red,fill=red] (10,1) circle [radius=\R];
\draw[red,fill=red] (11,0) circle [radius=\R];
\node[yshift=8] at (6,1) {$a$};
\node[below] at (6,0) {$v_1$};
\node[above] at (7,1) {$v_2$};
\node[yshift=-9] at (7,0) {$b$};
\node[below] at (8,-1) {$v_3$};
\node[yshift=8] at (9,0) {$c$};
\node[below] at (10,-1) {$v_4$};
\node[yshift=8] at (10,1) {$e$};
\node[yshift=9] at (11,0) {$d$};
\end{tikzpicture}
\end{center}
\caption{An example showing the bijection in Theorem \ref{thm:skeletal bijection}.}
\label{bijection example}
\end{figure}

The following corollary is clear using the bijection constructed in the proof of Theorem \ref{thm:skeletal bijection}.

\begin{cor} A pair $(\Gamma,U)$ refers to a skeletal 2-cliquish graph $\Gamma$ with maximal independent set $U$ as in Theorem \ref{thm:skeletal bijection}.
\begin{enumerate}
\item There is a bijection between pairs $(\Gamma,U)$ such that $|\Gamma|=n$ and $\Gamma$ has no isolated vertices, and loopless multigraphs $G=(V,E)$ with no isolated vertices that satisfy $|V|+|E|=n$.
\item There is a bijection between pairs $(\Gamma,U)$ with $\Gamma$ connected and $|\Gamma|=n$, and connected loopless multigraphs $G=(V,E)$ that satisfy $|V|+|E|=n$.
\item There is a bijection between pairs $(\Gamma,U)$ satisfying $|\Gamma|=n$ and $|U|=A$, and loopless multigraphs $G=(V,E)$ that satisfy $|V|=A$ and $|E|=n-A$.
\end{enumerate}
\end{cor}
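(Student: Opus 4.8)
The plan is to show that the bijection constructed in the proof of Theorem~\ref{thm:skeletal bijection} restricts to each of the three claimed sub-bijections. Since that map and its inverse are already in hand, for each part it suffices to check that the condition defining the left-hand class corresponds, under the bijection, to the condition defining the right-hand class. Recall that the bijection sends a skeletal $2$-cliquish pair $(\Gamma,U)$ with $\Gamma=(V,E)$ to the loopless multigraph $G'=(V',E')$ in which $V'=U$ and the edges are the vertices of $V\setminus U$, each $v\in V\setminus U$ contributing an edge joining its two neighbors in $U$. Part (3) is then immediate: $|V'|=|U|$ and $|E'|=|V\setminus U|=n-|U|$, so the constraint $|U|=A$ on the left matches exactly the constraints $|V'|=A$, $|E'|=n-A$ on the right.

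For part (1), the key observation is that in any $2$-cliquish graph every vertex of $V\setminus U$ has degree at least two, since by the second defining condition of Theorem~\ref{thm:main_result_independence} it has two neighbors in $U$; hence every isolated vertex of $\Gamma$ lies in $U$. Moreover a vertex $u\in U$ is isolated in $\Gamma$ precisely when no vertex of $V\setminus U$ is adjacent to it, i.e.\ precisely when $u$ is an endpoint of no edge of $G'$, i.e.\ precisely when $u$ is isolated in $G'$. So $\Gamma$ has an isolated vertex if and only if $G'$ does, and the same reasoning run backwards from $G'$ shows the restriction is a genuine bijection in both directions.

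For part (2), I would first prove the sharper statement that two vertices $u_1,u_2\in U$ lie in a common connected component of $\Gamma$ if and only if they lie in a common component of $G'$. One direction is routine: a walk in $G'$ lifts to a walk in $\Gamma$ by replacing each traversed edge with the corresponding vertex of $V\setminus U$. For the other direction, take a path in $\Gamma$ from $u_1$ to $u_2$ and look at the subsequence of vertices it visits in $U$; between two consecutive such vertices the path threads through a nonempty run $a_1,\dots,a_s$ of vertices of $V\setminus U$, and here \emph{skeletality} is used crucially: each adjacent pair $a_j\sim a_{j+1}$ must have a common neighbor in $U$ (else the edge $a_ja_{j+1}$ would be removable, contradicting skeletality), so the edges of $G'$ corresponding to $a_1,\dots,a_s$ form a connected chain joining the two $U$-vertices at the ends of the run. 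Concatenating over all runs shows $u_1$ and $u_2$ are connected in $G'$. Finally one notes that $\Gamma$ is connected iff all of $U$ lies in one component of $\Gamma$ (every vertex of $V\setminus U$ is attached to a vertex of $U$) and that $G'$ is connected iff all of $V'=U$ lies in one component of $G'$; combined with the sharper statement, this gives that $\Gamma$ is connected iff $G'$ is.

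I expect the main obstacle to be exactly the converse direction in part~(2): translating a path of $\Gamma$ that runs through a string of several $V\setminus U$-vertices into a connected piece of $G'$. This is the only place where the skeletality hypothesis is genuinely needed --- without it, two adjacent $V\setminus U$-vertices need not share a $U$-neighbor, and such a path segment would fail to project into $G'$ at all. Everything else reduces to bookkeeping with the explicit bijection of Theorem~\ref{thm:skeletal bijection}.
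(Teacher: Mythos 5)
Your proposal is correct and takes essentially the same route as the paper, which simply declares the corollary ``clear'' from the bijection of Theorem~\ref{thm:skeletal bijection}: you verify that this bijection restricts to each of the three subclasses. The only substantive detail you add beyond what the paper records is the connectivity argument in part~(2), where skeletality guarantees that adjacent vertices of $V\setminus U$ share a $U$-neighbor so that paths in $\Gamma$ project to connected edge chains in the multigraph; this is a correct and worthwhile elaboration of the step the paper leaves implicit.
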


Note that if one is interested in generating 2-cliquish graphs without isolated vertices, it is enough to start with skeletal 2-cliquish graphs without isolated vertices.  However, if one is interested in generating connected 2-cliquish graphs, it is not enough to begin with connected skeletal 2-cliquish graphs, as the skeletalization of a connected graph can be disconnected, as can be seen in Figure \ref{fig:skeletal}.

\bibliography{homomesy}
\bibliographystyle{halpha}

\end{document}